\pgfplotsset{compat=1.14}
\title[Noise sensitivity for minimal left-right crossing of a square]{Noise sensitivity and variance lower bound for minimal left-right crossing of a square in first-passage percolation}
\date{}
\author{Dor Elboim}
\address{Dor Elboim\hfill\break
    Department of Mathematics,
    Stanford University,
    California, United States.}
\email{dorelboim@gmail.com}
\author{Barbara Dembin}
\address{Barbara Dembin\hfill\break
    CNRS, Université de Strasbourg, France}
\email{barbara.dembin@math.unistra.fr}
\newtheorem{thm}{Theorem}[section]
\newcommand{\thistheoremname}{}
\newtheorem*{genericthm*}{\thistheoremname}
\newenvironment{namedthm*}[1]
  {\renewcommand{\thistheoremname}{#1}%
   \begin{genericthm*}}
  {\end{genericthm*}}
\newtheorem{lem}[thm]{Lemma}  
\newtheorem{prop}[thm]{Proposition}
\newtheorem{cor}[thm]{Corollary}
\newtheorem{definition}[thm]{Definition}
\newtheorem{remark}[thm]{Remark}
\newtheorem{claim}[thm]{Claim}
\renewcommand{\P}{\mathbb{P}}
\newcommand{\ZZ}{\mathbb{Z}}
\newcommand{\Z}{\mathbb{Z}}
\newcommand{\EE}{\mathbb{E}}
\newcommand{\E}{\mathbb{E}}
\newcommand{\var}{\mathrm{Var}}
\newcommand{\Inf}{\mathrm{Inf}}
\newcommand{\cP}{\mathcal{P}}
\newcommand{\ep}{\epsilon}
\DeclareMathOperator{\Bin}{Bin}
\numberwithin{equation}{section}
\begin{document}
\begin{abstract}
    We study first-passage percolation on $\mathbb Z ^2$ with independent and identically distributed weights, whose common distribution is uniform on $\{a,b\}$ with $0<a<b<\infty $.
    
    Following  Ahlberg and De la Riva \cite{ahlberg2023being}, we consider the passage time $\tau (n,k)$ of the minimal left-right crossing of the square $[0,n]^2$, whose vertical fluctuations are bounded by~$k$. We prove that when $k\le n^{1/2-\epsilon}$, the event that $\tau (n,k)$  is larger than its median is noise sensitive. This improves the main result of \cite{ahlberg2023being} which holds when $k\le n^{1/22-\epsilon }$. 

    Under the additional assumption that the limit shape is not a polygon with a small number of sides, we extend the result to all $k\le n^{1-\epsilon }$. This result follows unconditionally when $a$ and $b$ are sufficiently close.
    
    Under a stronger curvature assumption, we extend the result to all $k\le n$. This in particular captures the noise sensitivity of the event that the minimal left-right crossing $T_n=\tau (n,n)$ is larger than its median.
    
    Finally, under the curvature assumption, our methods give a lower bound of $n^{1/4-\epsilon }$ for the variance of the passage time $T_n$ of the minimal left-right crossing of the square. We prove the last bound also for absolutely continuous weight distributions, generalizing a result of Damron--Houdré--Özdemir \cite{Damron2024}, which holds only for the exponential distribution.

   Our approach differs from \cite{ahlberg2023being,Damron2024}; the key idea is to establish a small ball probability estimate in the tail by perturbing the weights for tail events using a Mermin--Wagner type estimate.
\end{abstract}

\maketitle

\section{Introduction}
First-passage percolation is a model for a random metric space, formed by a random perturbation of an underlying base space. Since its introduction by Hammersley--Welsh in 1965~\cite{HammersleyWelsh}, it has been studied extensively in the probability and statistical physics literature. We refer to \cite{Kesten:StFlour} for general background and to \cite{50years} for more recent results.


We study first-passage percolation on the square lattice $(\ZZ^2,E(\mathbb{Z}^2))$, in an independent and identically distributed (IID) random environment. The model is specified by a \emph{weight distribution $G$}, which is a probability measure on the non-negative reals. It is defined by assigning each edge $e\in E(\mathbb{Z}^2)$ a random passage time $t_e$ with distribution $G$, independently between edges. Then, each finite path $p$ in $\mathbb Z ^2$ is assigned the random passage time
\begin{equation}\label{eq:time of a path}
    T(p):=\sum _{e\in p} t_e,
\end{equation}
yielding a random metric $T$ on $\ZZ^2$ by setting the \emph{passage time between $u,v\in \mathbb Z ^2$} to
\begin{equation}\label{eq:defT}
    T(u,v):=\inf_{p} T(p),
\end{equation}
where the infimum ranges over all finite paths connecting $u$ and $v$. Any path achieving the infimum is termed a \emph{geodesic} between $u$ and $v$. 
The focus of first-passage percolation is the study of the large-scale properties of the random metric $T$ and its geodesics.

\subsection{The time constant and the limit shape}

When $G$ has a finite moment, one can prove a \emph{law of large numbers}: asymptotically when $n$ is large, the random variable $T(0,nx)$ behaves like $n \cdot \mu(x)$ where $\mu(x)$ is a deterministic constant depending only on the distribution $G$ and the point $x\in\Z^2$. More precisely, for every $x\in\Z^2$, there exists a deterministic constant $\mu(x)$ such that
\[\lim_{n\rightarrow\infty}\frac{T(0,nx)}{n}=\mu(x)\qquad\text{almost surely and in $L^1$}.\] 
The limit $\mu(x)$ is the so-called \emph{time constant} in the direction $x$. It is well known (see, e.g., \cite{50years}) that the function $\mu $ can be extended to all $x\in \mathbb R^2$ and is a norm on $\mathbb R^2$.

The unit ball $\mathcal B _G$ of the norm $\mu $ is called the \emph{limit shape} corresponding to the distribution $G$, and it controls the shape of a large ball in the metric $T$. This is shown in the classical limit shape theorem by Cox and Durrett~\cite{CoxDurrett}. To state the theorem, define the ball of radius $t$ by 
\begin{equation}
B(t):=\big\{\,v\in\ZZ^2:\, T(0,v)\leq t\,\big\}+ \Big[ -\frac{1}{2},\frac{1}{2}\Big] ^2.
\end{equation}

\begin{namedthm*}{Limit Shape Theorem}[Cox and Durrett~\cite{CoxDurrett}] 
For any integrable distribution $G$,
 there exists a deterministic convex set $\mathcal B_G$ such that for all $\epsilon >0$,
\begin{equation}
     \mathbb P \Big( \exists t_0>0  \text{ such that }\  \forall t\geq t_0, \quad (1-\ep)\mathcal B_G \subseteq \frac{B(t)}{t}\subseteq (1+\ep)\mathcal B_G \Big) =1.
\end{equation}
Moreover, we have \[\mathcal B_G\coloneqq\{x\in\mathbb R^2: \mu(x)\le 1\}.\]
\end{namedthm*}











\subsection{Noise sensitivity and the BKS theorem} 

Roughly speaking, a sequence of boolean function is said to be noise sensitive if a small, random perturbation of the inputs, leads to an asymptotically independent output. In this paper we consider functions (or events) of configurations of edge weights in the square $[0,n]^2$. That is, we assume that the edge distribution $G$ is supported in $\{a,b\}$ and consider boolean functions of the form $f_n:\{a,b\}^{E_n}\to \{0,1\}$, where $E_n$ is the set of edges having both endpoints in the square $[0,n]^2$.
Let $t$ be a random variable on $\{a,b\}^{E_n}$ distributed as $G^{\otimes E_n}$. Let $t^\ep$ be the random variable where for each edge $e$ the value $t_e$ is resampled independently with probability $\ep$. 
\begin{definition}The sequence $(f_n)_n$ is noise sensitive if for every $\ep>0$
\[\lim_{n\rightarrow \infty}\EE[f_n(t)f_n(t^\ep)]-\EE[f_n(t)]^2=0.\]
A sequence of events $\mathcal A_n \subseteq \{a,b\}^{E_n}$ is noise sensitive if $(\mathds 1 _{\mathcal A _n})_n$ is noise sensitive.
\end{definition}
For a function $f:\{a,b\}^{E_n}\to \{0,1\}$, we define the influence $\Inf_e(f)$ of an edge $e$ to be the probability that changing the weight of the edge $e$ modifies the outcome of $f$. More precisely, letting $\sigma_e:\{a,b\}^{E_n}\to \{a,b\}^{E_n}$ be the function that switches the weight of the edge $e$, we have
\[\Inf_e(f)\coloneqq\P(f(t)\ne f(\sigma_e(t)).\]

The following theorem due to Benjamini--Kalai--Schramm~\cite[Theorem~1.3]{BKS2} gives a useful criterion for noise sensitivity of a function in terms of the influences.

\begin{namedthm*}{BKS Theorem}[Benjamini--Kalai--Schramm~\cite{BKS2}]
Let $f_n:\{a,b\}^{E_n}\to \{0,1\}$ be a sequence of boolean functions such that  
\[\lim_{n\rightarrow \infty}\sum_e\Inf_e(f_n)^2=0.\]
Then, the sequence $(f_n)_n$ is noise-sensitive.
\end{namedthm*}
This condition is in fact a necessary condition when for all $n\ge1$ $f_n$ is monotone with respect to the partial order on $\{a,b\}^{E_n}$ (see \cite[Theorem~1.4]{BKS2}).
\subsection{Main results}

Denote by $T_n$ the minimal passage time of a path connecting the left and right boundaries of the square $[0,n]^2$. 
Define $\cP_k(n)$ be the set of paths from left to right in the square $[0,n]^2$ such that the vertical displacement is at most $k$.
We define $\tau(n,k)$ to be the minimal passage time between the left and the right of the square $[0,n]^2$ for paths with vertical displacement at most $k$, that is 
\begin{equation*}
    \tau(n,k)\coloneqq \inf\{T(\gamma): \gamma\in\cP_k(n)\}
\end{equation*}
It is believed that $T_n=\tau(n,n^{2/3+\ep})$ with high probability. This motivates the study of the restricted passage time $\tau(n,k)$.

In most of the paper, we will consider a two point distribution $G$. Namely, we will assume
\begin{equation}\label{eq:assumption atomic}
    \quad G\text{ is supported on $\{a,b\}$ where $0< a<b<\infty $}. \tag{ATO}
\end{equation}
For some of our results we will need to have a finer control on the geometry of the geodesic. This requires a better control on the tail of $G$ and on the limit shape.
The first assumption says that the limit shape is not a polygon with small number of sides.
\begin{equation}\label{ass:sides}
\begin{split}
      &\text{The limit shape } \mathcal B _G \text{ is either not a polygon,}\\
      & \quad \quad \quad \text{or a polygon with at least } s \text{ sides.}
\end{split}
 \tag{$\ge s$ sides}
\end{equation}
The second assumption says that the limit shape has positive curvature in  direction $\mathrm e_1$.
\begin{equation}\label{ass:uniform curbature}
\begin{split}
      \text{There is a constant }& c>0 \text{ such that for every $|h|<1$} \\ & \mu(\mathrm e_1+h\mathrm e_2)-\mu(\mathrm e_1)\ge c h^2.
\end{split}
 \tag{UC}
\end{equation}
These two assumptions are expected to hold for almost any distribution. However, the first assumption is known only for some weight distributions while the second assumption is not known for a single distribution. See discussion in Section~~\ref{sec:remarks} below.


In our first result, we study the noise sensitivity of the passage times $\tau(n,k)$ and $T_n$. To state the theorem, for any $\alpha\in(0,1)$, we denote by  $q_\alpha(X)$ the $\alpha $-quantile of the random variable $X$. Namely, we let  $q_\alpha (X):=\sup \{ x\in \mathbb R : \mathbb P (X\le x)\le \alpha \}$.

\begin{thm}\label{thm:noisesensitivity}
    Suppose that $G$ satisfies \eqref{eq:assumption atomic}. Fix $\alpha \in (0,1)$ and for a sequence of integers $(k_n)_{n\ge 1}$, consider the events 
    \begin{equation}
        \mathcal A _n(\alpha ,k_n):= \big\{\tau (n,k_n)\ge q_\alpha (\tau (k_n,n))\big\}.
    \end{equation}
    The following holds:
    \begin{enumerate}
        \item There exists $C>0$ depending only on $G$ and $\alpha $, such that if $k_n\le e^{-C\sqrt{\log n}}n^{1/2}$ for all $n\ge 1$ then the sequence $( \mathcal A _n(\alpha ,k_n))_{n\ge 1}$ is noise sensitive.
        \item For any $\epsilon >0$ there exists $s>0$ depending only on $\epsilon $ such that if \eqref{ass:sides} holds and $k_n\le n^{1-\epsilon }$ for all $n\ge 1$, then the sequence $(\mathcal A _n(\alpha ,k_n))_{n\ge 1}$ is noise sensitive.
        \item Suppose that \eqref{ass:sides} holds for a sufficiently large $s$ and that \eqref{ass:uniform curbature} holds. If $k_n\le n$ for all $n$ then the sequence $(\mathcal A _n(\alpha ,k_n))_{n\ge 1}$ is noise sensitive. In particular, the sequence of events $\{T_n\ge q_\alpha (T_n)\}$ is noise sensitive. 
    \end{enumerate}
\end{thm}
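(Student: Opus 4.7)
The plan is to verify the BKS criterion $\sum_e \Inf_e(\mathds 1_{\mathcal A_n})^2 \to 0$. Writing $\gamma^*$ for the constrained geodesic attaining $\tau(n,k_n)$, a single edge flip perturbs $\tau(n,k_n)$ by at most $b-a$, and does so only when $e\in\gamma^*$, so
$$\Inf_e(\mathds 1_{\mathcal A_n}) \le 2\,\P\bigl(e\in\gamma^*,\ |\tau(n,k_n)-q_\alpha|\le b-a\bigr).$$
Using $\Inf_e^2\le \Inf_e\cdot\max_{e'}\Inf_{e'}$, the BKS sum splits into a small-ball factor $\delta_n:=\sup_x\P(\tau(n,k_n)\in[x,x+b-a])$ valid near the tail location $q_\alpha$, and a length factor $\sum_e\Inf_e\le 2\,\E[|\gamma^*|\,\mathds 1_{|\tau(n,k_n)-q_\alpha|\le b-a}]$, which is controlled by the deterministic estimate $|\gamma^*|\le\tau(n,k_n)/a=O(n)$ together with the small-ball factor itself.

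The principal new input is a Mermin--Wagner anticoncentration bound that works at a genuine tail quantile. For the one-parameter family of tilted product measures $\P_s$ with $\P_s(t_e=b)=p+s$ on a chosen subset $S$ of edges of size $N:=|S|$, the KL divergence satisfies $\mathrm{KL}(\P_s\|\P)\le Cs^2N$, while an envelope identity yields
$$\partial_s\E_s[\tau(n,k_n)]\big|_{s=0}=(b-a)\sum_{e\in S}\P(e\in\gamma^*)=(b-a)\,\E[|\gamma^*\cap S|].$$
Choosing $s\asymp N^{-1/2}$ keeps $\P_s$ within bounded KL of $\P$ while shifting $\E_s[\tau(n,k_n)]$ by $\Omega(\E[|\gamma^*\cap S|]/\sqrt N)$. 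Applying this to $S$ equal to a narrow strip containing a $\Theta(n)$ portion of the typical geodesic, and then iterating the tilt in a disjoint-shift covering of a window centred at $q_\alpha$, produces a small-ball bound of the form $\delta_n\le n^{-1/2-\epsilon'}$ in each of the three regimes; plugging this back into the BKS sum drives it to zero.

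The role of the assumptions is to guarantee, on the conditioning event $\{\tau(n,k_n)\approx q_\alpha\}$, that $\E_s[|\gamma^*\cap S|]$ stays comparable to $n$ for a suitable $S$, so that the tilt argument remains effective. In item~(1), the constraint $k_n\le n^{1/2-\epsilon}$ automatically confines $\gamma^*$ to a strip of area $o(n^2)$ and Mermin--Wagner succeeds without any further assumption. In item~(2), \eqref{ass:sides} rules out long flat facets of the limit shape along which $\gamma^*$ could drift freely; standard limit-shape inputs then localize $\gamma^*$ within an effectively narrower strip, and the tilt extends to $k_n\le n^{1-\epsilon}$. In item~(3), the quadratic curvature \eqref{ass:uniform curbature} forces $\gamma^*$ into a strip of height $o(n)$ (Newman--Piza-type confinement) even without vertical restriction, so the tilt applies at $k_n=n$, in particular to $T_n=\tau(n,n)$.

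The hardest step is the transfer of the Mermin--Wagner anticoncentration from the bulk of $\tau(n,k_n)$, where a tilt naturally yields a density bound on the mean-shift scale, to a genuine tail quantile $q_\alpha$. The natural strategy is to reconstitute the envelope identity conditionally on $\{\tau(n,k_n)\approx q_\alpha\}$, but this requires a priori quantitative control on the geometry of $\gamma^*$ restricted to this rare event, which is precisely what the geometric hypotheses encode. Quantifying the cost of this conditioning so that $\delta_n$ beats $n^{-1/2}$ by a polynomial factor large enough to close the BKS sum is the technical crux, and it is here that the strongest hypothesis \eqref{ass:uniform curbature} does the decisive work when $k_n$ approaches $n$.
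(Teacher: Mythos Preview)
Your outline has the right two ingredients --- BKS plus a Mermin--Wagner small-ball bound --- but the way you combine them does not close, and the small-ball exponent you assert is not attainable.

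\textbf{The influence decomposition is too crude.} From $\sum_e\Inf_e^2\le(\max_e\Inf_e)(\sum_e\Inf_e)$ with $\max_e\Inf_e\le \delta_n$ and $\sum_e\Inf_e\le Cn\delta_n$ you obtain $\sum_e\Inf_e^2\le Cn\delta_n^2$. But the correct small-ball order (see below) is only $\delta_n\lesssim (k_n/n)^{1/2-o(1)}$, which gives $\sum_e\Inf_e^2\lesssim k_n$ and diverges as soon as $k_n\to\infty$; even part~(1) fails. The paper repairs this not by improving $\delta_n$ but by improving the bound on the \emph{maximal} influence: it passes to a cylinder variant $\widetilde\tau(n,k)$ with exact vertical symmetry, so that $\Inf_e(\mathds 1_{\widetilde\tau\ge q_\alpha})=\frac{1}{n}\E\big[|\pi\cap\mathfrak C(e)|\,\mathds 1_{\widetilde\tau\in[q_\alpha-b+a,q_\alpha]}\big]\le\frac{K_n}{n}\delta_n$, and an elementary optimisation (Claim~4.5) yields $\sum_e\Inf_e^2\le CK_n\delta_n^2$. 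The trivial column bound $K_n\le k_n$ then gives $\sum_e\Inf_e^2\lesssim k_n^2/n$ and hence part~(1). The limit-shape hypotheses enter \emph{only} to improve $K_n$ --- to $K_n\le n^{\delta}$ under \eqref{ass:sides}, via a bound on the number of times a geodesic meets a vertical line (Proposition~5.2) --- not, as you suggest, to localise $\gamma^*$ in a strip for the tilt.

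\textbf{The claim $\delta_n\le n^{-1/2-\epsilon'}$ is false, and Mermin--Wagner cannot be applied directly to $\tau(n,k)$.} Tilting a fixed strip $S$ does nothing when the random geodesic lies elsewhere, while tilting the whole square shifts the mean by $O(1)$. The paper instead applies the perturbation to the \emph{rectangle} passage time $T(n,k)$ (where the geodesic is confined by definition) and uses that $\tau(n,k)$ is essentially a minimum of $\Theta(n/k)$ copies of $T(n,k)$. This makes $q_\alpha(\tau(n,k))$ a genuine \emph{tail} value for $T(n,k)$ --- not for $\tau(n,k)$, which is where you place the difficulty --- and the Mermin--Wagner estimate (Proposition~2.4) then carries the extra tail factor: $\P\big(T(n,k)\in[q_\alpha-1,q_\alpha]\big)\lesssim (k/n)^{1/2}\,\P\big(T(n,k)\le q_\alpha\big)^{1-\epsilon}\lesssim(k/n)^{3/2-\epsilon}$. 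A union bound over the $n/k$ strips gives $\delta_n\lesssim(k/n)^{1/2-\epsilon}$, which is the sharp order and is strictly larger than $n^{-1/2}$ for every $k\ge 1$. Your proposed ``conditional envelope on the rare event $\{\tau\approx q_\alpha\}$'' is a red herring: that event is the $\alpha$-quantile of $\tau$ and has probability bounded away from $0$.
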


Our methods can be used to obtain variance lower bounds for the passage times $\tau (n,k)$ and $T_n$. We prove these bounds for a wider class of weight distributions. Specifically, we either assume that distribution $G$ satisfies \eqref{eq:assumption atomic} or that it is integrable and absolutely continuous. That is, in the latter case we assume
\begin{equation}\label{eq:assumption abs1} 
    G\text{ is integrable and absolutely continuous with respect to Lebesgue measure.} 
 \tag{ABS}
\end{equation}
and
\begin{equation}\label{eq:assumption exp1}
\begin{split}
     G\text{ has an exponential moment. Namely, we have  $\mathbb E [e^{\alpha t_e}]<\infty $ for some $\alpha >0$,}
\end{split}
 \tag{EXP}
\end{equation}
where $t_e\sim G$.

      

\begin{thm} \label{thm:lowerboundvar}Suppose that $G$ either satisfies \eqref{eq:assumption abs1} or \eqref{eq:assumption atomic}. Then:
\begin{enumerate}
    \item 
    There exists $C>0$ such that for all $n\ge 1$ and $k\le n$ 
\begin{equation*}
   \var(\tau(n,k))\ge e^{-C\sqrt{\log \frac nk }}\left(\frac nk\right).
\end{equation*}
    \item
    Suppose in addition that $G$ satisfies \eqref{eq:assumption exp1} and that \eqref{ass:uniform curbature}  holds. Then, for any $\epsilon >0$ there exists $c>0$ such that for all $n\ge 1$
\begin{equation*}
   \var(T_n)\ge cn^{1/4-\ep}.
\end{equation*}
\end{enumerate}
\end{thm}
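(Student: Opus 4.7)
Plan for Theorem \ref{thm:lowerboundvar}.

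The plan is to deduce both parts of the theorem from a small-ball (density) upper bound on the law of $\tau(n,k)$, itself obtained via a Mermin--Wagner type perturbation of the edge weights.

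Part (1). I plan to establish the small-ball estimate
\[
\P\!\big(\tau(n,k)\in[a,a+\ell]\big)\;\le\; C\,\ell\,\sqrt{k/n}\,e^{C\sqrt{\log(n/k)}}\qquad(a\in\mathbb R,\;\ell\ge 1),
\]
from which Part (1) follows immediately, since a law whose density is bounded by $D$ has variance at least $c/D^2$: taking $\ell=1$ yields $\var(\tau(n,k))\ge c(n/k)e^{-C\sqrt{\log(n/k)}}$. To prove the small-ball estimate, I would introduce a deterministic perturbation $h=(h_e)_{e\in E_n}$ of the edge weights: under \eqref{eq:assumption abs1}, $t_e\mapsto t_e+h_e$; under \eqref{eq:assumption atomic}, a corresponding shift of the Bernoulli parameter $\P(t_e=b)\mapsto\P(t_e=b)+h_e$. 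In either case the relative entropy between the perturbed law $\P_h$ and the original $\P$ is controlled by $C\|h\|_2^2$, so Pinsker's inequality gives $\|\P_h-\P\|_{TV}\le C\|h\|_2$. The key geometric ingredient is that every path of $\cP_k(n)$ confined to a horizontal slab $[0,n]\times[y_0,y_0+k]$ has at least $n$ horizontal edges; choosing $h$ supported on the horizontal edges of such a slab with $h_e=\delta/n$ therefore shifts every in-slab path by at least $\delta$ while $\|h\|_2^2\asymp\delta^2 k/n$. On the event $G_j$ that the geodesic of $\tau(n,k)$ lies in slab $S_j$, the MW bound then yields that the conditional law of $\tau(n,k)$ has density at most $O(\sqrt{k/n})$. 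Averaging $\P(\tau\in I)=\sum_j\P(G_j)\P(\tau\in I\mid G_j)$ over slabs recovers the unconditional bound. The delicate point is carrying out this aggregation without incurring a polynomial loss in $n/k$; this is where the chaining/multiscale scheme enters and yields the factor $e^{C\sqrt{\log(n/k)}}$.

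Part (2). Under the curvature assumption \eqref{ass:uniform curbature} and the exponential moment assumption \eqref{eq:assumption exp1}, Newman--Piza type geodesic fluctuation estimates give that the geodesic of $T_n=\tau(n,n)$ has vertical displacement at most $n^{3/4+\epsilon}$ with probability at least $1-e^{-n^c}$; on this event $T_n=\tau(n,n^{3/4+\epsilon})$. Applying Part (1) with $k=n^{3/4+\epsilon}$ then gives
\[
\var\!\big(\tau(n,n^{3/4+\epsilon})\big)\;\ge\; e^{-C\sqrt{\log n}}\cdot n^{1/4-\epsilon}\;\ge\; n^{1/4-2\epsilon}
\]
for $n$ large. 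To transfer this to $\var(T_n)$, I would use that $T_n=\tau(n,n^{3/4+\epsilon})$ outside an event of probability $\le e^{-n^c}$ and that under \eqref{eq:assumption exp1} all polynomial-in-$n$ moments of $T_n$ and $\tau(n,n^{3/4+\epsilon})$ are finite. A standard Cauchy--Schwarz/truncation argument then shows $|\var(T_n)-\var(\tau(n,n^{3/4+\epsilon}))|=o(1)$, giving $\var(T_n)\ge cn^{1/4-\epsilon}$ after relabelling $\epsilon$.

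Main obstacle. The hard part is the aggregation of slab-local Mermin--Wagner estimates into a global small-ball bound on $\tau(n,k)$ with only a subpolynomial loss; a naive union bound over the $\asymp n/k$ slabs would cost a polynomial factor and would only give $\var(\tau(n,k))\gtrsim k/n$, destroying the target $n/k$. The correct chaining/conditioning over slab positions is the technical heart of the proof. An auxiliary difficulty is implementing the MW perturbation under \eqref{eq:assumption atomic}, where one must bias discrete Bernoulli parameters instead of continuously translating weights, and verify that the entropy/shift scaling ($\|h\|_2^2\asymp \delta^2 k/n$ for shift $\delta$ within a slab) is preserved. This is the step where the present approach departs most sharply from the entropic methods of \cite{Damron2024}.
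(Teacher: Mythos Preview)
Your plan for Part~(2) matches the paper's and is fine. For Part~(1), however, the conditioning-on-slab argument does not work, and the proposed ``chaining/multiscale'' is not the right fix. On the event $G_j$ that the geodesic lies in slab $S_j$ one has $\tau(n,k)=T_j$ (the slab-$j$ crossing time), but conditioning on $T_j=\min_i T_i$ pushes $T_j$ into its left tail and \emph{inflates} its local density by a factor of order $n/k$; so your average $\sum_j\P(G_j)\,\P(\tau\in I\mid G_j)$ is just the union bound in disguise and cannot beat $\sqrt{n/k}$. Pinsker is also the wrong tool: it gives only an additive TV bound, whereas what is needed is the H\"older/Cameron--Martin form of Mermin--Wagner (Lemma~\ref{lem:MW}), $\P(A)\le e^{C\|h\|^2}\,\P_h(A)^{1/q}\,\P_{-h'}(A)^{1/p}$, which is multiplicative in $\P(A)$.

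The mechanism that actually recovers the polynomial is this: the H\"older-MW estimate yields a \emph{tail-sensitive} small-ball bound for the single-slab crossing $T(n,k)$ (Propositions~\ref{prop:MW} and~\ref{prop:MW2}),
\[
\P\big(T(n,k)\in[a,a+1]\big)\;\le\; Ce^{C/\epsilon}\sqrt{k/n}\;\P\big(T(n,k)\le a+1\big)^{1-\epsilon}+\text{error},
\]
and since $\tau(n,k)$ is essentially a minimum of order $n/k$ independent copies of $T(n,k)$, its $\alpha$-quantile $q_\alpha$ lies in the far left tail of $T(n,k)$: $\P(T(n,k)\le q_\alpha)\le C(k/n)\,|\log(1-\alpha)|$ (Lemma~\ref{lem:cormw}). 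Plugging this in gives $\P(T(n,k)\in[x,x+1])\le Ce^{C/\epsilon}(k/n)^{3/2-\epsilon}$ for $x\le q_\alpha$, so the naive union bound $\P(\tau(n,k)\in[x,x+1])\le(3n/k)\,\P(T(n,k)\in[x,x+1])$ now yields $(k/n)^{1/2-\epsilon}$. The $n/k$ loss is exactly absorbed by the extra tail factor, and optimising $\epsilon$ of order $(\log(n/k))^{-1/2}$ produces the $e^{-C\sqrt{\log(n/k)}}$.
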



    

\subsection{Remarks and extensions}\label{sec:remarks}
     Assumption~\eqref{ass:sides} is very weak and expected to hold for any weight distribution $G$. In fact, for some weight distributions it is possible to prove it. In our previous paper \cite[Theorem~1.5]{dembin2024coalescence} it is shown that if the weight distribution is a small perturbation of a constant distribution, then \eqref{ass:sides} holds. Applying this theorem for a distribution satisfying \eqref{eq:assumption atomic} we obtain the following. For any $p\in (0,1)$ and any integer $s$, there exists $\delta >0$ such that for any $0<a<b<\infty $ with $(b-a)/a<\delta $, the distribution $G$ supported on $\{a,b\}$ with $G(\{a\})=p$ satisfies assumption \eqref{ass:sides}. Hence, in this case part (2) of Theorem~\ref{thm:noisesensitivity} holds unconditionally.

In the proof of Theorem~\ref{thm:noisesensitivity}, Assumption~\eqref{ass:sides} is only used to prove that typically a geodesic does not intersect a given vertical line in many places. If one knew that with high probability the geodesic does not intersect more than constant time any given vertical line then one would get the result without limit shape assumption for any $k_n\le e^{-C\sqrt {\log n }} n$. In particular, part (2) of Theorem~\ref{thm:noisesensitivity} holds unconditionally in any directed version of first-passage percolation.

For first-passage percolation on $\mathbb Z ^2$, Assumption~\eqref{ass:uniform curbature} is not known for a single weight distribution $G$. However, there are other models of first-passage percolation in which the limit shape can be computed exactly. For such models Assumptions~\eqref{ass:uniform curbature} and~\eqref{ass:sides} are easily verified. Examples of such models are the rotationally symmetric models of first-passage percolation (see, e.g., \cite{basu2023rotationally}), in which the limit shape is a ball, and harmonic first-passage percolation \cite{dembin2024minimal}, in which the limit shape is a parabola. We expect our methods to extend to these models (with an appropriate definition of noise sensitivity), to obtain an unconditional proof of the noise sensitivity of the left-right crossing passage time being large than its median.

    Our proofs are not specific to dimension 2, the same proofs will work for higher dimensions and give
    \begin{equation*}
   \var(\tau(n,k))\ge e^{-C\sqrt{\log \frac nk }}\left(\frac n{k^{d-1}}\right).
\end{equation*}

The noise sensitivity question in higher dimensions becomes straightforward. One expects that the probability that the geodesic for the left-right crossing passes through an edge is of order $n^{-(d-1)}$. Bounding the influences by the probability that the edge lies on the geodesic is enough to show that the sum of the squared influences tends to 0, even when disregarding the intersection with the event where the passage time is close to its median.

\subsection{Comparison with previous works}
Ahlberg--De la Riva \cite{ahlberg2023being} first proved Theorem \ref{thm:noisesensitivity} for $k_n \le n^{1/22}$. Their result was the motivation for this work. The key element of their proof is obtaining a good upper bound for the probability that $\mathcal T(n,k)$, the passage time of the minimal left-right crossing in the rectangle $[0,n] \times [0,k]$, lies within the interval $[q_\alpha(\tau(n,k)) - b + a, q_\alpha(\tau(n,k))]$. To bound this probability, Ahlberg--De la Riva derived precise estimates for the moderate deviations of the passage time in thin rectangles (see \cite[Section 4]{ahlberg2023being}) by computing improved Cramér-type bounds. This approach imposes a restriction on the value of $k$, as the rectangles must be sufficiently thin to obtain good estimates on moderate deviations, allowing for small ball probability bounds. We take a different approach to upper-bound this quantity that does not require to obtain precise estimates on moderate deviations. That allows us to obtain a stronger result with a weaker condition on $k$ (see Theorem \ref{thm:noisesensitivity}). This allows for a conditional resolution of the original question posed by Benjamini, which motivated the work \cite{ahlberg2023being}:``Is being above the median for $T_n$ noise sensitive?".

In Damron--Houdré--Özdemir \cite{Damron2024}, the authors focus on the exponential distribution and crucially use the memoryless property of exponential distributions to establish a lower bound on the variance of order $n^{1/4-\ep }$ under the assumption \eqref{ass:uniform curbature}. The second part of Theorem~\ref{thm:lowerboundvar} (which follows from the unconditional, first part) extends their result to distributions that satisfy \eqref{eq:assumption abs1}+\eqref{eq:assumption exp1} or \eqref{eq:assumption atomic}.

A recent work by the second author \cite{elboim2024small}, establishes a small ball probability bound for the point to point passage time. More precisely, it is shown in \cite{elboim2024small} that as $\|x\|\to \infty $ one has  $\sup _{a>0}\mathbb P (T(0,x)\in [a,a+1] )\to 0$. This is done for the class of absolutely continuous weight distribution and answers a question raised in the paper of Ahlberg--De la Riva \cite{ahlberg2023being}. In the proof of Proposition~\ref{prop:MW} below, we use some of the ideas of \cite{elboim2024small} in order to obtain a local probability bound for an interval $[a,a+1]$ that is far in the tail of the passage time distribution.

\subsection{Sketch of proof} 
Let $n\ge k\ge 1$. For short, write $q_\alpha $ for $q_\alpha(\tau(n,k))$.
Uniformly perturbing the weights of edges in $[0,n]^2$ only results in a constant lower bound for the variance, while perturbing the weights of edges in $[0,n] \times [0,k]$ has a more significant impact on $\mathcal T(n,k)$, the passage time of the minimal left-right crossing in the rectangle $[0,n] \times [0,k]$. To explain the main ideas of the proof, let us first make the following approximation:

\[
\tau(n,k) = \min_{1 \le i \le n/k} \mathcal T_i,
\]
where $(\mathcal T_i)_i$ are independent copies distributed as $\mathcal T(n,k)$. This approximation is quite close to the actual situation in our applications. This observation implies that $\mathcal T(n,k) \le q_\alpha $ is a tail event. We then perturb the weights in the tail of $\mathcal T(n,k)$ to estimate the probability that $\mathcal T(n,k)$ lies within the interval $[q_\alpha  - b + a, q_\alpha ]$. Once we establish an upper bound for this probability, we can derive an upper bound for the probability of $\tau(n,k)$ lying within $[q_\alpha  - b + a, q_\alpha ]$ by using a union bound:

\[
\P(\tau(n,k) \in [q_\alpha  - b + a, q_\alpha ]) \le \frac{n}{k} \P(\mathcal T(n,k) \in [q_\alpha  - b + a, q_\alpha ]).
\]

We then estimate the small ball probability on the right-hand side by perturbing the weights for tail events of $\mathcal T(n,k)$ (see Proposition \ref{prop:MW}). In fact, we prove a control for $0 \le x \le q_\alpha$ of
\(\P(\tau(n,k) \in [x, x+1]).
\)

This bound is used to establish a lower bound on the variance. The idea is that the probability mass cannot stay concentrated in too small interval to the left of $q_\alpha$. This implies that there must be some positive mass further left from $q_\alpha$, and since by definition there is positive mass to the right of $q_\alpha$, this leads to a lower bound on the variance.

To bound the influence of edges, we first work with a more symmetric version of $\tau(n,k)$ by considering the same variable when gluing the top and bottom of the square. This more symmetric version agrees with $\tau(n,k)$ with high probability, so we can deduce the noise sensitivity of $\tau(n,k)$ from the noise sensitivity of the modified version. This modified version has the advantage of having vertical symmetry. We then upper bound the influence of edges by noting that for an edge to be influential, we need $\tau(n,k) \in [q_\alpha  - b + a, q_\alpha ]$ and the edge to be on the geodesic. To obtain a bound on the influence, we also need to control the expected intersection of the geodesic with a given column (which is done in Section \ref{sec 5}).

To make the approximation rigorous, we work with a slightly modified version of $\mathcal T(n,k)$, denoted $ T(n,k)$. The minimum will be taken over at most $3 \frac{n}{k}$ copies of $ T(n,k)$, which are not necessarily independent. However, it is sufficient to extract a subset of order $\frac{n}{k}$ independent copies  to obtain the desired result (see Lemma \ref{lem:cormw}).

\subsection{Organization of the paper}
In Section \ref{sec 2}, we derive a small ball probability bound in the tail for a slightly different definition of $\mathcal T(n,k)$. In section \ref{sec 3}, we prove Theorem \ref{thm:lowerboundvar}. In section \ref{sec 4}, we prove Theorem \ref{thm:noisesensitivity}. In section \ref{sec 5}, we control the vertical fluctuations of geodesics for $T_n$ under assumption \eqref{ass:uniform curbature} and we bound the intersection of the geodesic for $\tau(n,k)$ with vertical lines.

\subsection{Acknowledgments}
We thank Ron Peled and Eviatar Procaccia for fruitful discussions at an early stage of this project.

\section{Small ball probability bound in the tail}\label{sec 2}

Let $n\ge 4$ and let $1\le k \le n/4$. Let $T(n,k)$ be the passage time of minimal left-right crossing in the rectangle $[0,n]\times [0,2k]$ and vertical displacement at most $k$. In this section, we aim to give an upper bound on the small ball probability for $T(n,k)$ in its tail. Namely, we bound the probability that $T(n,k)\in[a,a+1]$ for $a$ that is far in the tail of the distribution of $T(n,k)$.
\subsection{A Mermin--Wagner type estimate}

The following lemma is the main technical tool required for the proof of the main theorem. The lemma is a Mermin--Wagner type estimate and is taken from \cite[Lemma~2.12 and Remark 2.15]{dembin2024coalescence}. A similar lemma was used also in \cite{dembin2025influence,elboim2024small}.
This lemma enables to perturb the weights and control the probability that a given event $A$ occurs in the perturbed environment compared to the probability that the same event occurs in the original environment. In our applications, we will perturb the environment by decreasing or increasing all the weights inside a cylinder.
\begin{lem}\label{lem:MW}
  Suppose that $G$ is an absolutely continuous distribution on $\mathbb R$. Then, there exist 
  \begin{itemize}
  \item  A Borel set $S_G\subseteq \mathbb R $ with $G(S_G)=1$.
      \item Borel subsets $(B_{\delta })_{\delta>0}$ of $S_G$ with $\lim_{\delta\downarrow 0}G(B_\delta)=1$,
      \item For each $\tau \in \mathbb R$, a bijection $g_{\tau }:S_G\to S_G$.
  \end{itemize}
   such that the following holds:
  \begin{enumerate}
  \item The function $g_\tau (s)$ is increasing both in $s$ and in $\tau $.
      \item $g_0$ is the identity function on $S_G$ and for any $\tau _1,\tau _2\in \mathbb R$ we have $g_{\tau _1}\circ g_{\tau _2}=g_{\tau _1+\tau _2}$.
      \item For any $\tau \in[0,1]$ and $s\in B_\delta $ we have  $g_{\tau }(s)\ge s+\delta \tau $.
      \item For any integer $n\ge 1$, a vector $\tau\in \mathbb R ^n$, any $p,q>1$ with $1/p+1/q=1$ and a Borel set $A\subset\mathbb{R}^n$ we have
      \begin{equation}\label{eq:MW probability estimate}
             \mathbb P (X\in A) \le e^{\frac q{2p}\|\tau \|_2^2} \mathbb P \big(  (g_{q\tau _i/p}(X_i)) _{i=1}^n\in A \big)^{1/q}\mathbb P \big( (g_{-\tau_i}(X_i))_{i=1}^n \in A \big)  ^{1/p}
      \end{equation}
where $X=(X_1,\dots ,X_n)$ is a vector of i.i.d.\ random variables with distribution $G $.
  \end{enumerate}
\end{lem}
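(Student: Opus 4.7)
The plan is to construct the family $g_\tau$ explicitly by transporting $G$ to a standard Gaussian and then reduce property (4) to the elementary Hölder identity for Gaussian densities. Let $F$ be the CDF of $G$, let $\Phi$ be the CDF of $N(0,1)$, and set $S_G := \{s : 0<F(s)<1\}$; since $F(X)$ is uniform on $[0,1]$ when $X\sim G$, one has $G(S_G)=1$. Define $\psi := \Phi^{-1}\circ F$ on $S_G$, so that $\psi$ is a strictly increasing Borel bijection from $S_G$ onto an interval $I\subseteq\mathbb R$ with $\psi(X)\sim N(0,1)$. Set
\[g_\tau(s) := \psi^{-1}\bigl(\psi(s)+\tau\bigr),\]
which is a bijection $S_G\to S_G$ (after absorbing into $S_G$ the measure-zero boundary set on which the shift would escape $I$; this does not change $G(S_G)=1$). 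Properties (1) and (2) are then immediate: monotonicity in $s$ and $\tau$ follows from that of $\psi$ and $\psi^{-1}$, and the flow law $g_{\tau_1}\circ g_{\tau_2}=g_{\tau_1+\tau_2}$ from the additivity of the shift.

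For item (3), write $f$ for the density of $G$ and note that along the flow,
\[\tfrac{d}{d\tau}g_\tau(s) = \frac{1}{\psi'(g_\tau(s))} = \frac{\phi(\psi(g_\tau(s)))}{f(g_\tau(s))}.\]
Define
\[B_\delta := \Bigl\{s\in S_G : f(u)\le \delta^{-1}\phi(\psi(u))\ \text{for every $u\in S_G$ with } \psi(u)\in[\psi(s),\psi(s)+1]\Bigr\}.\]
Integrating the ODE over $\tau\in[0,1]$ gives $g_\tau(s)-s\ge\delta\tau$ whenever $s\in B_\delta$. To see that $G(B_\delta)\to 1$ as $\delta\downarrow 0$, one pushes forward via $\psi$: since $\psi_*G$ is the standard Gaussian and $f/(\phi\circ\psi)=\psi'$ almost surely, the complement of $B_\delta$ corresponds to the Gaussian event that $\psi'\circ\psi^{-1}$ exceeds $\delta^{-1}$ somewhere on a unit-length interval to the right of $N$; as $\delta\downarrow 0$ the sets $B_\delta$ increase to a full-measure set, and monotone convergence closes the argument.

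For item (4), the algebraic heart is the pointwise Gaussian identity
\[\phi(x) = \exp\!\Big(\tfrac{q\tau^{2}}{2p}\Big)\,\phi\bigl(x-q\tau/p\bigr)^{1/q}\,\phi(x+\tau)^{1/p},\]
verified by expanding exponents: the $x$-linear terms cancel thanks to the relation $1/p+1/q=1$, the $x^{2}$ coefficients match because $1/p+1/q=1$, and the remaining constant simplifies to $q\tau^{2}/(2p)$ via $p+q=pq$. Writing $\mathbf 1_A=\mathbf 1_A^{1/q}\mathbf 1_A^{1/p}$ and applying Hölder with conjugate exponents $q,p$ yields
\[\mathbb P(N\in A)\le e^{q\tau^{2}/(2p)}\,\mathbb P(N+q\tau/p\in A)^{1/q}\,\mathbb P(N-\tau\in A)^{1/p}.\]
Translating through $\psi^{-1}$ gives the one-dimensional case of \eqref{eq:MW probability estimate}, since $\psi(g_\tau(X))=\psi(X)+\tau$. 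The $n$-variable statement then follows by tensorizing the pointwise identity coordinate-wise (the joint Gaussian density factorizes, producing the total constant $e^{q\|\tau\|_{2}^{2}/(2p)}$) and applying Hölder once to the $n$-fold Lebesgue integral, using independence of the $X_i$.

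The main obstacle is item (3): it is not enough to control $1/\psi'$ pointwise, because the flow must be bounded below \emph{uniformly} in $\tau\in[0,1]$. This is why $B_\delta$ is defined via a supremum-type condition over a unit Gaussian interval rather than a pointwise condition, and why its measure must be controlled through a Lebesgue-style estimate on the Gaussian side rather than a naive Markov bound. Items (1), (2) and (4) are essentially formal once the transport map $\psi$ has been set up.
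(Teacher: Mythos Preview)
The paper does not prove this lemma; it is imported from \cite[Lemma~2.12 and Remark~2.15]{dembin2024coalescence}. Your construction---transport $G$ to a standard Gaussian via $\psi=\Phi^{-1}\circ F$ and let $g_\tau$ be conjugate to the shift by $\tau$---is exactly the construction used there, and your verification of (1), (2) and (4) via the pointwise Gaussian-density identity together with H\"older is the intended argument.

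Item (3), however, has a real gap as written. Your set
\[
B_\delta=\bigl\{s:\ f(u)\le \delta^{-1}\phi(\psi(u))\ \text{for \emph{every} }u\text{ with }\psi(u)\in[\psi(s),\psi(s)+1]\bigr\}
\]
imposes a pointwise-supremum condition on the density, and this need not exhaust $S_G$. If $f$ has an integrable spike at an interior point $s_0$ (e.g.\ $f(s)\asymp |s-s_0|^{-1/2}$), then $\psi'\circ\psi^{-1}$ is unbounded on every interval containing $y_0:=\psi(s_0)$, so every $s$ with $\psi(s)\in[y_0-1,y_0]$ fails your condition for \emph{all} $\delta>0$; hence $G(\cup_\delta B_\delta)<1$ and your monotone-convergence argument breaks. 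The fix is to define $B_\delta$ directly as $\{s:g_\tau(s)\ge s+\delta\tau\text{ for all }\tau\in[0,1]\}$ (equivalently, replace ``every $u$'' by ``a.e.\ $u$''). With this definition $\cup_\delta B_\delta=\{s:D_+\psi^{-1}(\psi(s))>0\}$, and full $G$-measure reduces to showing $(\psi^{-1})'>0$ Lebesgue-a.e., which is where the absolute continuity of $G$ is genuinely used: if $(\psi^{-1})'=0$ on a set $E$ of positive Lebesgue measure then $|\psi^{-1}(E)|=0$ (image of the zero-derivative set of a monotone map is null), hence $G(\psi^{-1}(E))=0$ since $G\ll\mathrm{Leb}$, hence $\mathbb P(\psi(X)\in E)=0$, contradicting $|E|>0$. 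A minor separate point: your $S_G=\{0<F<1\}$ makes $\psi$ non-injective on any gap in the support of $G$; the issue is not that shifts ``escape $I$'' (the range of $\psi$ is all of $\mathbb R$) but that $\psi$ is constant on gaps---one should take $S_G$ to be the set of points of increase of $F$.
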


\subsection{Small ball probability for continuous distribution}

In this section, we focus on distributions $G$ that satisfy \eqref{eq:assumption abs1}.
The aim of this section is to prove the following control on small ball probability.
\begin{prop}\label{prop:MW} There exist $C,c>0$ depending only on $G$ such that for any $n\ge k\ge 1$, any $a>0$ and $\ep>0$, we have that 
    \begin{equation}
        \mathbb P \big( T(n,k)\in [a,a+1] \big) \le Ce^{C/\ep}\sqrt{\frac k n} \mathbb P \big( T(n,k)\le a+1 \big)^{1-\ep}+e^{-c\sqrt n}.
    \end{equation}
\end{prop}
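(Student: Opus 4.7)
The plan is a multi-shift Mermin--Wagner argument: apply Lemma~\ref{lem:MW} to $N\sim\sqrt{n/k}$ different shifts of the weights in $[0,n]\times[0,2k]$ and combine the resulting inequalities via AM--GM. The dimensional heuristic that produces the $\sqrt{k/n}$ factor is the following: the rectangle contains $\Theta(nk)$ edges while every left-right crossing with vertical displacement at most $k$ traverses at least $n$ edges, so a uniform upward shift of magnitude $\tau$ on all weights in the rectangle raises the passage time by $\sim\tau n$ at a Mermin--Wagner cost $\|\tau\|_2^2\sim\tau^2 nk$. Balancing these, one can afford $\sim\sqrt{n/k}$ distinct shifts whose effects separate the passage time into disjoint unit intervals while keeping the total Mermin--Wagner exponent of order $1/\epsilon$.

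Concretely, fix $p=1/\epsilon$, $q=1/(1-\epsilon)$, take $N=\lceil c\sqrt{n/k}\rceil$ for a small constant $c=c(\epsilon,G)$, and choose $\delta>0$ such that $G(B_\delta)$ is close to $1$ together with a constant $C_\epsilon=O(1/(\epsilon\delta))$. Define the shifts $\tau^{(j)}\in\mathbb{R}^{E}$ by $\tau^{(j)}_e:=C_\epsilon j/n$ for edges $e$ inside the rectangle $[0,n]\times[0,2k]$ (and zero outside), for $j=1,\ldots,N$. By the choice of $C_\epsilon$ and item~(3) of Lemma~\ref{lem:MW}, the shift $g_{q\tau^{(j)}/p}$ raises the passage time of any left-right path of length at least $n$ whose edges are mostly in $B_\delta$ by at least $j$. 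Let $G_n$ denote the good event that every left-right crossing of $[0,n]\times[0,2k]$ with vertical displacement $\le k$ has at least half of its edges in $B_\delta$. Applying \eqref{eq:MW probability estimate} to $A=\{T(n,k)\in[a,a+1]\}$ with each shift $\tau^{(j)}$ and multiplying the $N$ resulting inequalities gives a total Mermin--Wagner exponent $\tfrac{q}{2p}\sum_j\|\tau^{(j)}\|_2^2=O(1/\epsilon)$, producing the $e^{C/\epsilon}$ prefactor.

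The final step is to translate the shifted events into disjoint constraints on $T(X)$. On $G_n$, the event $\{T(g_{q\tau^{(j)}/p}(X))\in[a,a+1]\}$ implies $T(X)\le a+1-j$; combined with a matching upper bound on the shift (obtained via a Lipschitz-type refinement of Lemma~\ref{lem:MW}(3) on a further refined good set), these $N$ events correspond to pairwise disjoint unit intervals contained in $(-\infty,a+1]$. Hence $\sum_{j=1}^N\mathbb{P}\big(\{T(g_{q\tau^{(j)}/p}(X))\in[a,a+1]\}\cap G_n\big)\le\mathbb{P}(T(n,k)\le a+1)$, and AM--GM yields $\big(\prod_j\mathbb{P}(\cdots)\big)^{1/N}\le\mathbb{P}(T(n,k)\le a+1)/N+\mathbb{P}(G_n^c)$. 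Raising to the $1/q$-th power, substituting $N^{-1/q}\lesssim\sqrt{k/n}$, and multiplying by the Mermin--Wagner prefactor delivers the main term; the contribution from $\mathbb{P}(G_n^c)$ is absorbed into the additive $e^{-c\sqrt n}$.

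The principal obstacle is establishing $\mathbb{P}(G_n^c)\le e^{-c\sqrt n}$: uniformly over the $e^{O(n)}$ candidate left-right crossings, a majority of the edges must carry weight in $B_\delta$. This is handled by choosing $\delta$ small enough that $G(B_\delta^c)$ beats the path-counting entropy in a per-path Chernoff estimate combined with a union bound, or equivalently by a subcritical bond-percolation/Peierls argument on the bad set $\{e:X_e\notin B_\delta\}$. A secondary technical subtlety is securing the matching upper bound on the shift $T(g_{q\tau^{(j)}/p}(X))-T(X)$ needed for disjointness of the intervals, which requires a quantitative Lipschitz strengthening of Lemma~\ref{lem:MW}(3) on the good portion of the edges.
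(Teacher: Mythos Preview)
Your discrete multi-shift scheme is in the same spirit as the paper's argument, but the disjointness step has a genuine gap. You claim that on $G_n$ the events $A_j^+:=\{T(g_{q\tau^{(j)}/p}(X))\in[a,a+1]\}$ correspond to $T(X)$ lying in pairwise disjoint unit intervals, which requires a two-sided Lipschitz bound on $g_\tau(s)-s$. No such upper bound is supplied by Lemma~\ref{lem:MW}, and in fact none holds under \eqref{eq:assumption abs1} alone: in the construction $g_\tau(s)=F^{-1}(\Phi(\Phi^{-1}(F(s))+\tau))$ the increment blows up wherever the density of $G$ is small. The correct route is to forget about intervals for $T(X)$ and use monotonicity directly: the map $j\mapsto T(g_{s_j}(X))$ is nondecreasing by parts (1)--(2) of Lemma~\ref{lem:MW}, so if each consecutive increment exceeds $1$ the events $A_j^+$ are automatically disjoint and all lie in $\{T(X)\le a+1\}$. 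But this exposes a second gap: your good event $G_n$ (half the edges of every crossing satisfy $X_e\in B_\delta$) only controls the first increment $T(g_{s_1}(X))-T(X)$; to control $T(g_{s_{j+1}}(X))-T(g_{s_j}(X))$ you need $g_{s_j}(X_e)\in B_\delta$ along the relevant crossing, a condition on the \emph{shifted} weights that does not follow from $X_e\in B_\delta$. The paper handles exactly this point via Corollary~\ref{cor:2}, an extra application of Lemma~\ref{lem:MW}.

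Even once patched, your scheme yields $e^{C/\epsilon}(k/n)^{(1-\epsilon)/2}\,\mathbb P(T\le a+1)^{1-\epsilon}$, because both the $1/N$ factor and the probability pass through the same H\"older exponent $1/q=1-\epsilon$; the extra $(n/k)^{\epsilon/2}$ is not absorbed by $e^{C/\epsilon}$, so you do not recover the proposition as stated. The paper decouples the two factors by a different structure: first Lemma~\ref{lem:MW} with $p=q=2$ and a \emph{continuous} shift $r\in[-1,1]$, then Cauchy--Schwarz over $r$, reducing the problem to bounding $\int_{-1}^1\mathbb P(T_r\in[a,a+1])\,dr$. On a good event, monotonicity of $r\mapsto T_r$ together with the one-sided lower bound from Lemma~\ref{lem:MW}(3) forces the set $\{r:T_r\in[a,a+1]\}$ to have Lebesgue measure $\lesssim\sqrt{k/n}$, and to be empty unless $T_{-1}\le a+1$. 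A \emph{separate} application of Lemma~\ref{lem:MW} with $p=1/\epsilon$ then converts $\mathbb P(T_{-1}\le a+1)$ into $e^{C/\epsilon}\mathbb P(T\le a+1)^{1-\epsilon}$, producing the factored form $\sqrt{k/n}\cdot\mathbb P^{1-\epsilon}$ with the full exponent on $\sqrt{k/n}$.
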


Denote by $E(n,k)$ the edges with both endpoints in $[0,n]\times[0,k]$.
For $r\in [-2,2]$ we define the perturbation function $\tau _r:E(n,2k)\to [0,\infty )$ by
\begin{equation}
    \forall e\in E(n,2k)\qquad \tau _r (e):= \frac{r}{\sqrt{kn}}, 
\end{equation}
We define the modified environment by \[t_{e,r}:=g_{\tau _r(e)}(t_e).\] We denote by $T_r(p)$ the weight a path $p$ and by $T_r(n,k)$  the passage time of minimal left-right crossing in the rectangle $[0,n]\times [0,2k]$ and vertical displacement at most $k$ in the modified environment $(t_{e,r})_e$ (that is $T_r(n,k)=(T(n,k))(t_{e,r})$). In our proof we continuously vary $r$ and analyze the effect it has on the passage time $T(n,k)$.
The proof of Proposition \ref{prop:MW} will follow from the next lemma which controls the average impact the perturbation has on the small ball probability.
\begin{lem}\label{cor:1}
    There exist $C,c>0$ depending only on $G$ such that for any $a\ge 0$ and $\ep>0$ we have 
    \begin{equation}
        \int _{-1}^1 \mathbb P \big(  T_r(n,k) \in [a,a+1] \big) dr \le Ce^{C/\ep}\P(T(n,k)\le a+1)^{1-\ep}\sqrt{\frac kn}+ e^{-c\sqrt n}.
    \end{equation}
\end{lem}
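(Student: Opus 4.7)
The plan is to combine a Fubini exchange with a slope estimate on the monotone map $r \mapsto T_r(n,k)$ (monotonicity comes from Lemma~\ref{lem:MW}(1)), and then apply the Mermin--Wagner inequality of Lemma~\ref{lem:MW}(4) to transfer the relevant event back to the unperturbed environment. First, I would swap integral and expectation to obtain
\begin{equation*}
\int_{-1}^1 \P\big(T_r(n,k) \in [a,a+1]\big) \, dr = \E\big[\mathrm{Leb}(R)\big],\qquad R := \{r \in [-1,1] : T_r(n,k) \in [a,a+1]\}.
\end{equation*}
Since $g_\tau$ is increasing in $\tau$, the map $r \mapsto T_r(n,k)$ is nondecreasing, so $R$ is an interval.

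The geometric heart is a slope lower bound. I would fix $\delta>0$ small enough that $G(B_\delta)$ is close to $1$ and define a good event $\mathcal{G}$ on which: (i) every geodesic $\gamma_r$ for $r \in [-1,1]$ has length at most $C_0 n$, and (ii) every left-right path in $[0,n]\times[0,2k]$ of length at most $C_0 n$ contains at least a fraction $1/2$ of edges $e$ with $t_e$ in a robust subset $B' \subseteq B_\delta$, chosen so that $g_{u/\sqrt{kn}}(t_e) \in B_\delta$ for every $|u|\le 1$. On $\mathcal{G}$, using $\gamma_{r_2}$ as a valid path for $T_{r_1}$, together with $g_a \circ g_b = g_{a+b}$ and Lemma~\ref{lem:MW}(3),
\begin{equation*}
T_{r_2}(n,k) - T_{r_1}(n,k) \ge \sum_{e \in \gamma_{r_2}} \bigl[g_{r_2/\sqrt{kn}}(t_e) - g_{r_1/\sqrt{kn}}(t_e)\bigr] \ge \frac{\delta(r_2-r_1)}{2}\sqrt{\frac{n}{k}}
\end{equation*}
for all $-1 \le r_1 < r_2 \le 1$. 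Because $T_{r_2} - T_{r_1} \le 1$ for any $r_1, r_2 \in R$, this gives $\mathrm{Leb}(R) \le (2/\delta)\sqrt{k/n}$ on $\mathcal{G}$. By monotonicity, $R \ne \emptyset$ implies $T_{-1}(n,k) \le a+1$, so
\begin{equation*}
\E\big[\mathrm{Leb}(R)\big] \le \frac{2}{\delta}\sqrt{\frac{k}{n}} \, \P\big(T_{-1}(n,k) \le a+1\big) + 2\P(\mathcal{G}^c).
\end{equation*}

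To upgrade $\P(T_{-1} \le a+1)$ to $\P(T \le a+1)^{1-\ep}$, I would apply Lemma~\ref{lem:MW}(4) to $X = (t_e)_{e \in E(n,2k)}$ with perturbation $\tau_e = -1/\sqrt{kn}$, exponents $q = 1/\ep$ and $p = 1/(1-\ep)$, and Borel set $A$ chosen so that $X \in A$ iff $T_{-1}(n,k) \le a+1$. A direct computation using $g_a \circ g_b = g_{a+b}$ identifies the three probabilities in the inequality as $\P(T_{-1} \le a+1)$, $\P(T_{-q} \le a+1) \le 1$, and $\P(T \le a+1)$. Since $\|\tau\|_2^2 \le C$, the prefactor is bounded by $e^{C/\ep}$, yielding
\begin{equation*}
\P\big(T_{-1}(n,k) \le a+1\big) \le e^{C/\ep}\, \P\big(T(n,k) \le a+1\big)^{1-\ep}.
\end{equation*}
Combining this with the previous display and the bound $\P(\mathcal{G}^c) \le e^{-c\sqrt n}$ finishes the proof.

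The main obstacle is the construction of $\mathcal{G}$ and the estimate $\P(\mathcal{G}^c) \le e^{-c\sqrt n}$. The path-density statement (ii) should follow from a Chernoff bound together with a union bound over the at most $(2k+1)3^L$ left-right paths of length $L$ starting on the left side: choosing $\delta$ so small that $G((B')^c)$ dominates the $3^L$ growth gives exponential decay in $L \ge n$. The geodesic length bound (i) is a standard first-passage percolation estimate; uniformity over $r \in [-1,1]$ should be gained from the smallness of the perturbation $1/\sqrt{kn}$, which does not alter the geodesic structure drastically as $r$ varies over a compact interval.
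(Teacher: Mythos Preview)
Your overall architecture---Fubini, a slope lower bound on $r\mapsto T_r(n,k)$ to control $\mathrm{Leb}(R)$, the observation that $R\neq\emptyset$ forces $T_{-1}(n,k)\le a+1$, and the Mermin--Wagner transfer back to $T(n,k)$---is exactly the paper's strategy, and your application of Lemma~\ref{lem:MW}(4) with $q=1/\ep$ is correct.

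The genuine gap is the construction of the ``robust'' set $B'$. Lemma~\ref{lem:MW} only gives you Borel sets $B_\delta$ with $G(B_\delta)\to 1$; it says nothing about the stability of $B_\delta$ under the maps $g_\tau$. Your set $B'=\bigcap_{|u|\le 1} g_{u/\sqrt{kn}}^{-1}(B_\delta)$ has no reason, from the stated properties alone, to carry mass close to~$1$. Establishing this would require opening the proof of Lemma~\ref{lem:MW} and using the specific construction of $g_\tau$ and $B_\delta$, which you do not do. The paper sidesteps this entirely: it first proves the slope bound only at the base point (Lemma~\ref{lem:111}, where one needs just $t_e\in B_{\delta_0}$, no robustness), and then uses the Mermin--Wagner inequality \emph{again} (Corollary~\ref{cor:2}) to transport that estimate from increments at $0$ to increments at any $s\in[-1,1]$, at the cost of a harmless square root. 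With this in hand, a discrete grid $R=r_0\mathbb Z\cap[-1,1]$ and a union bound over $|R|$ points replace your continuous slope argument.

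A secondary point: your condition (i) on geodesic lengths is unnecessary and the paper does not use it. Every left--right path has at least $n$ edges, and the Chernoff/union-bound argument you sketch for (ii) already gives $\P(\exists p:\ |\{e\in p:t_e\in B'\}|<|p|/2)\le\sum_{L\ge n}(2k+1)3^L 8^{-L}\le e^{-cn}$ over \emph{all} paths, with no length cap. The paper exploits exactly this by looking only at the first $n$ steps of any path (the set $\mathcal P'$ in Lemma~\ref{lem:111}). Dropping (i) also removes the awkward question of geodesic length uniformity over $r$ under only the integrability assumption~\eqref{eq:assumption abs1}.
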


Let us first explain how to conclude the proof of Proposition~\ref{prop:MW} using Lemma \ref{cor:1}.

\begin{proof}[Proof of Proposition~\ref{prop:MW}]
Note that $\|\tau_r\|_2^2\le C$.
   Thanks to Lemma \ref{lem:MW} (applied to $q=p=2$ and $A=\{T(n,k)\in[a,a+1]\}$), we have for any $r\in [0,1]$ and $a>0$, we have \begin{equation}\label{eq:mw}
        \mathbb P \big( T(n,k)\in [a,a+1] \big) \le C \sqrt{ \mathbb P \big( T_r(n,k)\in [a,a+1] \big) \mathbb P \big( T_{-r}(n,k)\in [a,a+1] \big)}.
    \end{equation}
    Integrating \eqref{eq:mw} over $r\in [0,1]$ and using Cauchy-Schwartz inequality we obtain
    \begin{equation}
        \mathbb P \big( T(n,k)\in [a,a+1] \big) \le C \sqrt{ \int _0^1\mathbb P \big( T_r(n,k)\in [a,a+1] \big) dr \int_0^1 \mathbb P \big( T_{-r}(n,k)\in [a,a+1] \big)dr}.
    \end{equation}
    We finish the proof by bounding the right hand side of the last equation using Lemma~\ref{cor:1}.  
\end{proof}

To prove Lemma~\ref{cor:1}, we first need to 
prove that any path starting on the left side of length at least $n$ is affected by the perturbation. Let $\delta _0$ such that $G(B_{\delta _0})\ge 0.999$ where $B_{\delta_0}$ was defined in Lemma \ref{lem:MW}. This choice imply that $\P(t_{e,r}\ge t_e+\delta_0\tau_r(e))\ge 0.999$. This will ensure that with very high probability, any path from left to right in the cylinder has at least half of its edges significantly affected by the perturbation. This is the purpose of the following lemma.

Denote by $\cP$ the set of paths in $[0,n]\times[0,2k]$ from left to right that have vertical displacement bounded by $k$.

\begin{lem}\label{lem:111}
For any $ r\in [0,1]$ we have that 
\begin{equation}
\mathbb P \Big( \exists p\in \mathcal P, \  T_{r}(p)-T(p)\le \delta _0r\sqrt{\frac n k}  \Big) \le e^{-n}.    
\end{equation}
\end{lem}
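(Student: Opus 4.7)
The plan is to combine a pointwise lower bound from Lemma \ref{lem:MW}(3) with a Chernoff-type concentration and a union bound over paths in $\mathcal{P}$.

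For each edge $e$, set $Y_e := \ind\{t_e \in B_{\delta_0}\}$; these are i.i.d.\ Bernoullis with $\EE[Y_e] = G(B_{\delta_0}) \ge 0.999$. By Lemma~\ref{lem:MW}(3) applied with $\tau = \tau_r(e) = r/\sqrt{kn} \in [0,1]$, whenever $Y_e = 1$ we have $t_{e,r} \ge t_e + \delta_0 r/\sqrt{kn}$; otherwise, monotonicity of $g_\tau$ in $\tau$ still gives $t_{e,r} \ge t_e$. Summing over the edges of $p$,
\begin{equation*}
    T_r(p) - T(p) \;\ge\; N(p) \cdot \frac{\delta_0 r}{\sqrt{kn}}, \qquad N(p) := \sum_{e \in p} Y_e.
\end{equation*}
Since every $p\in\mathcal{P}$ has $|p|\ge n$ (its horizontal displacement), it suffices to show that with probability at least $1-e^{-n}$ every path in $\mathcal{P}$ satisfies $N(p) \ge |p|/2$; combined with the above, this gives $T_r(p)-T(p) \ge (\delta_0 r/2)\sqrt{n/k}$, from which the lemma follows after a one-time rescaling of $\delta_0$ (taking $G(B_{\delta_0})$ slightly closer to $1$).

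To establish the high-probability event I use a Chernoff bound against a union bound. Writing $\eta := 1 - G(B_{\delta_0})$, for a fixed self-avoiding path of length $L$,
\begin{equation*}
    \P\bigl(N(p) < L/2\bigr) \;\le\; \exp\bigl(-L\cdot D(1/2 \,\|\, 1-\eta)\bigr),
\end{equation*}
where $D$ denotes the Kullback--Leibler divergence. The number of self-avoiding paths of length $L$ in $\mathcal{P}$ starting at a vertex of the left boundary is at most $(2k+1)\,\mu^L$ for some $\mu \le 3$ (a crude bound on the connective constant of $\mathbb{Z}^2$). Taking $\delta_0$ small enough that $D(1/2\,\|\,1-\eta) > \log \mu + 2$, which is possible by Lemma~\ref{lem:MW} since $G(B_\delta)\to 1$ as $\delta\downarrow 0$, and summing over $L\ge n$ gives
\begin{equation*}
    \P\bigl(\exists\, p\in \mathcal{P}:\ N(p)<|p|/2\bigr) \;\le\; (2k+1)\sum_{L\ge n} e^{-2L} \;\le\; e^{-n},
\end{equation*}
using $k \le n/4$ and $n$ sufficiently large.

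The main obstacle is balancing the Chernoff exponent against the combinatorial growth $\mu^L$ of the path count: one must choose $\delta_0$ small enough to push the KL divergence above $\log \mu$. A secondary subtlety is the treatment of short paths, in particular horizontal paths of length exactly $n$, where one cannot hope for more than $n$ good edges; this is precisely why the argument produces a factor $1/2$ in the lower bound on $T_r(p) - T(p)$, which is then absorbed into $\delta_0$ at the outset.
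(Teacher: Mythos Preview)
Your proof is correct and follows the same strategy as the paper: bound $T_r(p)-T(p)$ from below by $N(p)\cdot\delta_0 r/\sqrt{kn}$ via Lemma~\ref{lem:MW}(3), then apply a Chernoff bound against a union bound over paths. The only difference is cosmetic---the paper truncates each path to its first $n$ steps (so the union bound is over at most $(2k+1)3^n$ prefixes) rather than summing over all lengths $L\ge n$ as you do; both work, and both incur the factor $1/2$ you correctly flagged and absorbed into $\delta_0$.
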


\begin{proof}
    Let $r\in [0,1]$. Recall that if $t_e\in B_{\delta _0}$ then 
    \begin{equation}
     t_{e,r}=g_{\tau _r(e)} (t_e)\ge t_e+\delta _0\tau _r(e)=t_e+\frac{\delta _0r}{\sqrt{kn}}.
    \end{equation}
     Thus, letting $\mathcal P'$ be the set of paths which are given by the first $n$ steps of a path in $\mathcal P$, we obtain for a fixed  $p\in \mathcal P'$ that 
    \begin{equation}
    \begin{split}
\mathbb P \Big( T_{r}(p)-T(p)\le  \delta _0r \sqrt{\frac n k} \Big) \le \mathbb P \Big( \big| \big\{e\in p : t_e\in B_{\delta _0} \big\} \big| \le \frac n2 \Big) \le 
\mathbb P \Big( \text{Bin}(n,0.999) \le \frac n 2\Big) \le 8^{-n}.    
\end{split}
\end{equation}
Moreover, we have that $|\mathcal P'| \le k3^{n}$ as we can first choose the stating point of $p\in \mathcal P'$ and then choose each step of the path. The lemma follows from a union bound over $p\in \mathcal P'$ as long as $n$ is sufficiently large.
\end{proof}
The following corollary control the impact of adding a perturbation to an already perturbed environment.
\begin{cor}\label{cor:2}
    For any $s\in [-1,1]$ and $r\in [0,1]$ we have that 
\begin{equation}
\mathbb P \Big( \exists p\in \mathcal P, \  T_{s+r}(p)-T_s(p)\le  \delta _0r\sqrt{\frac n k} \Big) \le Ce^{-n/2}.    
\end{equation}
\end{cor}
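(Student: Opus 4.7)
The plan is to reduce Corollary~\ref{cor:2} to Lemma~\ref{lem:111} by using the Mermin--Wagner estimate~\eqref{eq:MW probability estimate} to transfer the bound from the original environment $(t_e)_e$ to the $s$-shifted environment $(t_{e,s})_e$. The first step is the algebraic observation that, by the group property $g_{\tau_1}\circ g_{\tau_2}=g_{\tau_1+\tau_2}$ and the linearity $\tau_{s+r}(e)=\tau_s(e)+\tau_r(e)$,
\[
t_{e,s+r}=g_{\tau_{s+r}(e)}(t_e)=g_{\tau_r(e)}(g_{\tau_s(e)}(t_e))=g_{\tau_r(e)}(t_{e,s}).
\]
Consequently, the event appearing in Corollary~\ref{cor:2} is exactly the event of Lemma~\ref{lem:111} applied with $(t_{e,s})_e$ playing the role of $(t_e)_e$. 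Writing $\tilde A_r\subset\mathbb R^{E(n,2k)}$ for that event viewed as a Borel set and $X:=(t_e)_e\sim G^{\otimes E(n,2k)}$, the target probability equals $\mathbb P(X\in A)$ with $A:=\{x:(g_{\tau_s(e)}(x_e))_e\in\tilde A_r\}$.

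Next I would apply~\eqref{eq:MW probability estimate} with $p=q=2$, the vector $\tau=-\tau_s$ (identified with its values on edges), and the set $A$ above. Using the bijectivity and the group property of $g_\bullet$, one quickly checks that the first event on the right-hand side, $\{(g_{-\tau_s(e)}(X_e))_e\in A\}$, collapses to $\{X\in\tilde A_r\}$, while the second event, $\{(g_{\tau_s(e)}(X_e))_e\in A\}$, becomes the shifted set $\{X:(g_{2\tau_s(e)}(X_e))_e\in\tilde A_r\}$. Bounding the first probability by $e^{-n}$ via Lemma~\ref{lem:111}, bounding the second trivially by $1$, and noting that
\[
\|\tau_s\|_2^2=|E(n,2k)|\,\frac{s^2}{kn}
\]
is uniformly bounded (by a constant like $5$) for $|s|\le 1$, since $|E(n,2k)|=O(nk)$, one obtains $\mathbb P(X\in A)\le Ce^{-n/2}$, as required.

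The only mildly subtle point is the choice of the auxiliary vector $\tau$ in~\eqref{eq:MW probability estimate}. One might hope to pick $\tau$ so that \emph{both} events on the right-hand side collapse to $\tilde A_r$ and thereby preserve the full $e^{-n}$ decay; however, the asymmetry between the exponents $g_{q\tau_i/p}$ and $g_{-\tau_i}$ prevents this for any admissible $p,q$. The choice $\tau=-\tau_s$ is the natural compromise: one factor collapses to $\mathbb P(X\in\tilde A_r)$ while the other is controlled trivially, producing a square-root loss in the exponent. This loss, however, is exactly tolerated by the target bound $Ce^{-n/2}$, so nothing more is needed.
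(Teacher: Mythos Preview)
Your proof is correct and essentially identical to the paper's own argument: both define the same set $A$ (your $A=\{x:(g_{\tau_s(e)}(x_e))_e\in\tilde A_r\}$ unwinds to the paper's explicit description), apply the Mermin--Wagner estimate~\eqref{eq:MW probability estimate} with $p=q=2$ and perturbation vector $\pm\tau_s$, bound one factor trivially by $1$ and identify the other as the event of Lemma~\ref{lem:111}. The only cosmetic difference is that the paper takes $\tau=\tau_s$ while you take $\tau=-\tau_s$; since $p=q=2$ makes the two factors play symmetric roles, this is immaterial.
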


\begin{proof}
The corollary follows from Lemma~\ref{lem:111} using Lemma~\ref{lem:MW}. Let us note that this will not be the main use of Lemma~\ref{lem:MW} and that this part can be done using different arguments.
The set $A\subseteq \mathbb R ^{E(n,2k)}$ from Lemma~\ref{lem:MW} will be 
\begin{equation}
    A:=\Big\{ (z_e)_{e\in E(n,2k)} :  \exists p\in \mathcal P  , \ \sum _{e\in p} g _{\tau _{s+r}(e) }(z_e)-g_{\tau _s(e)}(z_e) \le \delta _0r\sqrt{\frac n k}\Big\}.
\end{equation}
By Lemma~\ref{lem:MW} for $p=q=2$ and the fact that $\|\tau _s\|_2\le C$ we have that
\begin{equation}
\mathbb P \big( (t_e)_{e\in E(n,2k)} \in A \big) \le C\sqrt{\mathbb P \big( (g_{-\tau _s(e)}(t_e))_{e\in E(n,2k)} \in A \big) }.    
\end{equation}
It is easy to check (using part (2) of Lemma~\ref{lem:MW}) that the event on the right hand side of the last inequality is precisely the event of Lemma~\ref{lem:111} while the event on the left hand side is the event of the corollary.
\end{proof}

We can now prove Lemma~\ref{cor:1}.

\begin{proof}[Proof of Lemma \ref{cor:1}]
Let $r_0:= \frac 8{\delta_0}\sqrt {\frac k n} $  and let $R:=r_0\mathbb Z \cap [-1,1]$. Define the event
\begin{equation}
    \Omega :=\bigcap _{r\in  R}  \Big\{ \forall p \in \mathcal P, \  T_{r+r_0} (p)-T_{r}(p) > 4 \Big\}.  
\end{equation}
Using Corollary~\ref{cor:2} and a union bound we obtain
\begin{equation}\label{eq:Omega}
\mathbb P (\Omega ^c )\le  C \sum _{r\in R} e^{-n/2}   \le  e^{-c\sqrt{n}}.
\end{equation}
Next, we write 
\begin{equation}\label{eq:1}
        \int _{-1}^1 \mathbb P \big(  T_r(n,k) \in [a,a+1] \big) dr \le \mathbb P (\Omega ^c)+  \int _{-1}^1 \mathbb P \big( \Omega , \ T_r(n,k) \in [a,a+1] \big) dr.
    \end{equation}
Let us bound the second term on the right hand side of \eqref{eq:1}. On $\Omega$, for any path $p$ connecting the left to the right and for all $r\in R$ we have $T_{r+r_0}(p)-T_r(p)\ge 4$. Hence, on $\Omega $, we have $T_{r+r_0}(n,k)-T_r(n,k)\ge 4$ for all $r\in R$. Thus, using also that $T_r(n,k)$ is increasing in $r$ we obtain that on $\Omega$ there is at most one element $r\in R$ for which $T_r(n,k)\in [a,a+1]$ and that \[\big| \big\{r\in [-1,1] : T_r(n,k)\in [a,a+1] \big\}\big| \le r_0\le C\sqrt{\frac k n},\] where in here $|\cdot |$ denotes the Lebesgue measure of the set. Moreover, it is easy to check that since $T_r(n,k)$ is increasing in $r$, we have 
 \begin{equation*}
     \big| \big\{r\in [-1,1] : T_r(n,k)\in [a,a+1] \big\}\big|=\big| \big\{r\in [-1,1] : T_r(n,k)\in [a,a+1] \big\}\big|\mathds{1}_{T_{-1}(n,k)\le a+1}
 \end{equation*}
 Hence, using Fubini's theorem
    \begin{equation}\label{eq:after Fubini}
    \begin{split}
        \int _{-1}^1 \mathbb P \big( \Omega , \ T_r(n,k) \in [a,a+1] \big) dr&= \mathbb E \Big[ \mathds 1 _{\Omega }\big| \big\{r\in [0,1] : T_r(n,k)\in [a,a+1] \big\}\big| \Big] \\&\le C\P(T_{-1}(n,k)\le a+1)\sqrt{\frac kn}.
        \end{split}
    \end{equation}


We now apply Lemma \ref{lem:MW} with the set $A\subseteq \mathbb R ^{E(n,2k)}$ given by
\begin{equation}
    A:=\Big\{ (z_e)_{e\in E(n,2k)} : \exists p\in \mathcal P \text{ with } \ \sum _{e\in p} g_{\tau _{-1}}(z_e) \le a+1\Big\},
\end{equation}
with $p,q$ defined by $1/q=1-\ep$ and $1/p=\ep$, and with the vector $\tau $ being $\sigma =(\sigma _e)_{e\in E(n,2k)}$ defined by $\sigma _e:=\tau_{p/q}(e)$. This gives
\begin{equation}\label{eq:MWT-1}
\begin{split}
    \P(T_{-1}(n,k)\le a+1)&=\mathbb P \big( (t_e)_{e\in E(n,2k)}\in A \big) \le e^{\frac{q}{2p}\|\sigma \|_2^2 }\P\big( (g_{\tau _1}(t_e))_{e\in E(n,2k)}\in A \big) ^{\frac 1 q} \\
    &=e^{\frac{q}{2p}\|\sigma \|_2^2 }\P\big( T(n,k)\le a+1 \big) ^{\frac 1 q} \le e^{C/\ep}\P(T(n,k)\le a+1)^{1-\ep},
\end{split}
\end{equation}
where in the last inequality we used that $\|\sigma \|^2\le C \frac {p^2}{q^2}$. Substituting this estimate into \eqref{eq:after Fubini} finishes the proof of the lemma.
\end{proof}

\subsection{Small ball probability for atomic distribution}
In this section, we focus on distributions $G$ that satisfy \eqref{eq:assumption atomic}. In our previous works \cite{dembin2024coalescence,dembin2025influence,elboim2024small}, the use of Mermin--Wagner (Lemma \ref{lem:MW}) was limited to absolutely continuous distributions. We here explain how to adapt the proof of Proposition \ref{prop:MW} in this context.
We aim to prove the following result.
\begin{prop}\label{prop:MW2} There exist $C,c>0$ depending only on $G$ such that for any $n\ge k\ge 1$, $a>0$ and $\ep>0$, we have that 
    \begin{equation}
        \mathbb P \big( T(n,k)\in [a,a+1] \big) \le C \big( \log \frac nk \big) e^{C/\ep}\P(T(n,k)\le a+1)^{1-\ep}\sqrt{\frac kn}+\left(\frac kn\right)^{2} .
    \end{equation}
\end{prop}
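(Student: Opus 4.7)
My approach would be to reduce the atomic case to the absolutely continuous setting of Proposition~\ref{prop:MW} via a smoothing of the distribution. Let $\eta > 0$ be a parameter to be chosen, and introduce i.i.d.\ auxiliary variables $U_e \sim \mathrm{Unif}([0,1])$, independent of $(t_e)$. Define the smoothed weights $\tilde t_e := t_e + \eta U_e$; their common distribution is absolutely continuous with density of order $1/\eta$, so Lemma~\ref{lem:MW} applies. Write $\tilde T(n,k)$ for the corresponding restricted passage time.

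The first step is to rerun the proof of Proposition~\ref{prop:MW} verbatim in the smoothed environment, tracking the $\eta$-dependence of all constants. The density of the smoothed distribution being of order $1/\eta$, the constant $\delta_0$ from Lemma~\ref{lem:MW} scales like $\eta$ and the grid spacing $r_0$ from Lemma~\ref{cor:1} like $\sqrt{k/n}/\eta$, giving the smoothed small-ball bound
\[
\mathbb P\bigl(\tilde T(n,k)\in [a,a+1]\bigr)\le \frac{C}{\eta}e^{C/\epsilon}\sqrt{\tfrac{k}{n}}\, \mathbb P\bigl(\tilde T(n,k)\le a+1\bigr)^{1-\epsilon} + e^{-c\sqrt n}.
\]
The second step is the pointwise comparison $T\le\tilde T\le T+\eta|\gamma|\le T+3\eta n$ (using $|\gamma|\le 3n$ for $\gamma\in\mathcal P$), which yields $\{T(n,k)\in[a,a+1]\}\subseteq\{\tilde T(n,k)\in[a,a+1+3\eta n]\}$. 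The third step is to bound the right-hand event by decomposing the interval into pieces, applying the smoothed small-ball bound on each, and using $\mathbb P(\tilde T(n,k)\le x)\le \mathbb P(T(n,k)\le x)$ to return to atomic probabilities.

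The main obstacle is the third step: extracting exactly a $\log(n/k)$ factor. A naive union bound over $\lceil 1+3\eta n\rceil$ unit sub-intervals with prefactor $1/\eta$ gives $(1+3\eta n)/\eta\gtrsim n$, far too large. To overcome this I would iterate the smoothed estimate along a dyadic sequence of scales $a+2^j$ (for $j=0,\ldots,J$ with $2^J\approx 3\eta n$), at each step relating $\mathbb P(\tilde T(n,k)\le a+2^{j+1})$ to $\mathbb P(\tilde T(n,k)\le a+2^j)^{1-\epsilon}$ via the smoothed estimate on $[a+2^j,a+2^{j+1}]$; choosing $\eta$ near $\sqrt{k/n}$ keeps the dyadic count at $O(\log(n/k))$ levels and produces the target factor. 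The weaker error term $(k/n)^2$ (in place of $e^{-c\sqrt n}$) reflects that the concentration behind Lemma~\ref{lem:111} admits only a polynomial bound in the atomic case: the Binomial number of ``flipped'' edges on a path has too small a mean for a Chernoff estimate to absorb the union over the $\sim 3^n$ paths in $\mathcal P'$, so one must accept a polynomial residue in $k/n$.
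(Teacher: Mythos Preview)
Your smoothing approach differs fundamentally from the paper's, and the third step has a gap that I do not see how to close.

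The paper does not smooth $G$. It parametrizes the atomic weights through Gaussians, $t_e = a\mathds{1}_{N_e \le s} + b\mathds{1}_{N_e > s}$, and perturbs by $t_{e,r} := t_e(N_e + r/\sqrt{kn})$, applying \eqref{eq:MW probability estimate} directly to the $N_e$'s. The replacement for Lemma~\ref{lem:111} is Proposition~\ref{cor:lotof b edges}: any $T(n,k)$-geodesic contains $\ge \alpha n$ edges of weight $b$, with exponentially high probability (this uses the van den Berg--Kesten strict monotonicity of the time constant). Conditionally on $(t_e)$, each such $b$-edge flips to $a$ under $r\mapsto -r$ independently with probability $\ge cr/\sqrt{kn}$; the flip count along the geodesic therefore dominates $\Bin(\alpha n, cr/\sqrt{kn})$, and Chernoff gives $\mathbb{P}(T - T_{-r} \le cr\sqrt{n/k}) \le e^{-cr\sqrt{n/k}}$ (Lemma~\ref{lem:1bis}). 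Both the $\log(n/k)$ prefactor and the $(k/n)^2$ error then come from choosing the grid spacing $r_0 = C\log(n/k)\sqrt{k/n}$ in Lemma~\ref{cor:1bis}, so that the union bound over $R$ yields $|R|\cdot e^{-cr_0\sqrt{n/k}} \le (k/n)^2$.

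Your dyadic iteration does not close. The smoothed bound controls $\mathbb{P}(\tilde T \in [x, x+L])$ only in terms of the \emph{right} endpoint $\mathbb{P}(\tilde T \le x+L)^{1-\epsilon}$, so with $p_j:=\mathbb{P}(\tilde T\le a+2^j)$ the recursion you obtain is implicit, $p_{j+1} \le p_j + C e^{C/\epsilon}\, 2^j \eta^{-1}\sqrt{k/n}\, p_{j+1}^{1-\epsilon}$, and gives no information once $Ce^{C/\epsilon}2^j\eta^{-1}\sqrt{k/n}\ge p_{j+1}^{\epsilon}$. No choice of $\eta$ rescues this: with $\eta \sim \sqrt{k/n}$ the coefficient is $Ce^{C/\epsilon}2^j\ge 1\ge p_{j+1}^\epsilon$ already at $j=0$, and in any case the interval $[a,a+C\eta n]$ has length $\sim\sqrt{kn}$, which is $\log(kn)$ dyadic levels rather than $\log(n/k)$. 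At bottom you are trying to bound $\mathbb{P}(T \le a+1+C\eta n)$ in terms of $\mathbb{P}(T \le a+1)$, which is precisely the anticoncentration you set out to prove; the argument is circular. Your explanation of the $(k/n)^2$ term is also off: since $\tilde t_e$ is absolutely continuous, Lemma~\ref{lem:111} applies to it verbatim (each edge lies in $B_{\delta_0}$ with probability $\ge 0.999$ independently of $\eta$, so the union over $3^n$ paths is absorbed by the per-path $8^{-n}$ bound exactly as in the continuous case), yielding error $e^{-c\sqrt n}$, not a polynomial. The reasoning you describe---a per-edge flip probability too small to beat a union over all paths---is exactly why the \emph{paper's} argument must work along the geodesic (Lemma~\ref{lem:1bis}) instead of over all paths, but it has no counterpart in your smoothing scheme.
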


    

The proofs in this section are similar to those in the previous section, with the key difference being the approach used to control how the perturbation affects $T(n,k)$, which requires new methods. Specifically, a major challenge arises when perturbing atomic distributions. For continuous distributions, we could argue that the probability of significant perturbation for each edge weight was high. This allowed us to ensure that all paths must pass through a substantial fraction of edges that were significantly perturbed. However, for atomic distributions, we can no longer guarantee that each edge has a high probability of being affected by the perturbation. To address this, rather than proving a uniform bound on the impact of the perturbation across all paths, we focus on how the reduction in edge weights affects the geodesic. To ensure that the perturbation sufficiently impacts $T(n,k)$, we need to control the number of $b$-edges on the geodesic, as these are the only edges whose values can be reduced during the perturbation. This is the objective of Proposition \ref{cor:lotof b edges}.

\begin{prop}\label{cor:lotof b edges}
There exist positive constants $\alpha, c$ and $C$ such that for any $n\ge k\ge 1$ and  $n\ge 1$, we have
\begin{equation*}
    \P \big(\text{$\exists \gamma$ a geodesic for $T(n,k)$ such that } |\{e\in \gamma : t_e=b\}|\ge \alpha n \big) \le Ce^{-cn}.
\end{equation*}
\end{prop}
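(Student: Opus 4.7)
My plan is to reduce the proposition to a standard upper-tail estimate for $T(n,k)$ obtained by comparing it to a single explicit competitor path. The horizontal segment $L$ joining $(0,k)$ to $(n,k)$ lies in $[0,n]\times[0,2k]$ and has zero vertical displacement, so it is an admissible competitor in the infimum defining $T(n,k)$. Consequently $T(n,k)\le T(L)$, and $T(L)$ is a sum of $n$ i.i.d.\ $[a,b]$-valued random variables with mean $n\mu$, where $\mu:=pa+(1-p)b$ and $p:=G(\{a\})\in(0,1)$. Applying Hoeffding's inequality yields the exponential upper-tail bound
\[
\P\big(T(n,k)\ge n(\mu+\varepsilon)\big)\le \exp\!\big(-2\varepsilon^2 n/(b-a)^2\big)\qquad\text{for every }\varepsilon>0.
\]

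Next I would close the loop by a deterministic pathwise estimate. If a geodesic $\gamma$ realizing $T(n,k)$ has at least $\alpha n$ edges of weight $b$, then since $\gamma$ must traverse a rectangle of horizontal width $n$ we have $|\gamma|\ge n$, hence
\[
T(n,k)=T(\gamma)\ge \alpha n\cdot b+(|\gamma|-\alpha n)\cdot a\ge n\big(a+\alpha(b-a)\big).
\]
Thus the event in the proposition is contained in $\{T(n,k)\ge n(a+\alpha(b-a))\}$.

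It then remains to choose the constants. Since $\mu-a=(1-p)(b-a)$, the inequality $a+\alpha(b-a)>\mu$ is equivalent to $\alpha>1-p$; as $p\in(0,1)$ one can take, for instance, $\alpha:=1-p/2$ and $\varepsilon:=p(b-a)/4>0$, which gives $a+\alpha(b-a)\ge\mu+\varepsilon$. Combining this with the Hoeffding bound above yields the desired exponential decay with constants depending only on $G$.

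I do not anticipate a genuine obstacle here; the statement is a soft upper-tail estimate for $T(n,k)$ obtained by testing one explicit path. The one point worth emphasizing is that the argument uses in an essential way that both atoms $a$ and $b$ carry positive mass under \eqref{eq:assumption atomic}, which is what gives $1-p<1$ and permits a choice $\alpha\in(1-p,1)$: this is exactly what prevents a geodesic from making free use of $b$-edges without being forced to exceed the typical value $n\mu$ of the straight-line reference path.
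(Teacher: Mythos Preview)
Your argument is sound, but it proves the statement \emph{as printed}, which contains a typo: the displayed inequality should read $|\{e\in\gamma:t_e=b\}|\le\alpha n$ rather than $\ge\alpha n$. This is clear both from the paper's own proof (which opens with the bound $\mathbb P(\exists\,\gamma\text{ geodesic with }|\{e\in\gamma:t_e=b\}|\le\alpha n)\le\mathbb P(T^+(n,k)\le T(n,k)+\alpha n)$) and from the way the result is invoked in the proof of Lemma~\ref{lem:1bis}, where it is used to guarantee that the geodesic contains \emph{at least} $\alpha n$ edges of weight $b$ so that the downward perturbation has something to act on. The literal statement you established---that no geodesic can use more than $(1-p/2)n$ $b$-edges---is indeed soft, exactly as your competitor-path argument shows.

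The intended inequality is genuinely harder, and your approach does not extend to it: comparing $T(n,k)$ to a single competitor path only yields upper bounds on $T(n,k)$, and hence cannot force a \emph{lower} bound on the number of $b$-edges along the geodesic. The paper's route is to introduce the tilted weights $t_e^+:=t_e+\mathds 1\{t_e=b\}$ and invoke the strict monotonicity of the time constant (van den Berg--Kesten, Marchand) to get $\mu^+(e_1)>\mu(e_1)$. A geodesic with at most $\alpha n$ $b$-edges would force $T^+(n,k)\le T(n,k)+\alpha n$, and for $\alpha=(\mu^+(e_1)-\mu(e_1))/4$ this is ruled out with exponentially small failure probability by concentration of both passage times around $n\mu^+(e_1)$ and $n\mu(e_1)$. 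The remaining technical work is to check that the restricted left-right time $T(n,k)$ shares the same time constant as the unrestricted one; the paper does this by sandwiching $T(n,k)$ between a minimum of unrestricted point-to-point times (from below) and a sum of short point-to-point times along the midline (from above), and then applying Theorem~\ref{thm:Talagrand} and Azuma respectively.
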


This statement essentially follows from a result of van den Berg and Kesten \cite{van1993inequalities} and of Marchand \cite{marchand2002strict}, that the time constant increases as the distribution becomes more variable. See also \cite[Theorem~2.12]{50years} and the discussion after the theorem for a nice explanation about this result. However, to get Proposition~\ref{cor:lotof b edges} as stated, a slightly technical proof is required in order to show that restricted, left-right crossing has the same time constant as non-restricted, point to point passage time. We postpone its proof to the end of the section.

    


To define the perturbed environment for this context, we build the randomness by using a family of standard Gaussian random variables. More precisely, consider a family of standard Gaussian random variables $(N_e)_e$ and set \[t_e(N_e)\coloneqq a\mathds{1}_{N_e\le s}+b\mathds{1}_{N_e> s}\]
where $s$ is chosen such that $\P(N_e\le s)=G(\{a\})$. 
For $r\in [-2,2]$ we define the perturbation function $\tau _r:\mathbb E(n,2k)\to [0,\infty )$ by
\begin{equation}
    \tau _r (e):= \frac{r}{\sqrt{kn}}, 
\end{equation}

We define the modified environment by \[t_{e,r}:=t_e(N_e+\tau_r(e)).\] We denote by $T_r(n,k)$  the passage time of minimal left-right crossing in the rectangle $[0,n]\times [0,2k]$ and vertical displacement at most $k$ in the modified environment $(t_{e,r})$. We will continuously vary $r$ and analyse the effect it has on the passage time $T_r(n,k)$.

We begin by giving a lower bound on the impact of the perturbation  when $r<0$ on $T(n,k)$. When $r$ is negative, the modification increases the number of $a$ edges. The purpose of this lemma is to ensure that there is a big amount of edges on the geodesic whose value will decrease after the modification.

\begin{lem}\label{lem:1bis} There exists $c>0$ such that
for any $n\ge k\ge 1 $ and $r\in [0,1]$, we have
\begin{equation}
\mathbb P \Big( T(n,k)-T_{-r}(n,k)\le cr\sqrt{\frac n k}  \Big) \le e^{-cr \sqrt{\frac nk}}.   \end{equation}
\end{lem}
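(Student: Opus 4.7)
I would let $\gamma$ be a geodesic for $T(n,k)$ in the original environment, and focus on the $b$-edges of $\gamma$ whose shift $N_e \mapsto N_e - r/\sqrt{kn}$ turns them into $a$-edges; this happens precisely when $N_e \in (s, s+r/\sqrt{kn}]$. Setting
\[
X := \bigl|\{e\in\gamma : N_e\in(s,s+r/\sqrt{kn}]\}\bigr|,
\]
one has $T_{-r}(n,k) \le T_{-r}(\gamma) = T(\gamma) - (b-a)X = T(n,k) - (b-a)X$, since $\gamma$ is still a valid left-right crossing in the perturbed environment. It therefore suffices to show that $X \ge c\,r\sqrt{n/k}$ with probability at least $1 - e^{-c\,r\sqrt{n/k}}$.

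The key move is to condition on the coarse data $(t_e)_e$, which makes both $\gamma$ and the set $B_\gamma := \{e \in \gamma : t_e = b\}$ measurable. Given $(t_e)$, the Gaussians $(N_e)$ are independent truncated normals, so for each $e \in B_\gamma$ the indicator $\mathds 1\{N_e \in (s, s+r/\sqrt{kn}]\}$ is an independent Bernoulli of parameter
\[
q_r := \frac{\Phi(s + r/\sqrt{kn}) - \Phi(s)}{1 - \Phi(s)} \ge c_1\, r/\sqrt{kn},
\]
where $c_1 = c_1(G) > 0$ uses only the positivity of the normal density near $s$. Hence $X \mid (t_e) \sim \mathrm{Bin}(|B_\gamma|, q_r)$. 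Proposition~\ref{cor:lotof b edges} supplies the crucial lower bound $|B_\gamma| \ge \alpha n$ on every geodesic on an event $\mathcal E$ of probability at least $1 - Ce^{-cn}$. On $\mathcal E$ the conditional mean of $X$ is at least $\alpha c_1 r\sqrt{n/k}$, so a multiplicative Chernoff bound yields $\mathbb P\bigl(X \le \tfrac12 \alpha c_1 r\sqrt{n/k} \mid (t_e)\bigr) \le e^{-c_2 r\sqrt{n/k}}$ on $\mathcal E$. Combining with $\mathbb P(\mathcal E^c) \le Ce^{-cn} \le Ce^{-c r\sqrt{n/k}}$ (valid since $r \sqrt{n/k} \le n$ for $r \le 1$, $k \ge 1$) and absorbing the multiplicative constant into the exponent gives the desired estimate; in the regime where $r\sqrt{n/k}$ is of order one the bound is trivial.

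The one genuine subtlety is that $\gamma$ depends on the entire environment, so concentration cannot be applied to a fixed path. Conditioning on $(t_e)$ sidesteps this issue: it freezes $\gamma$ while leaving, on each $e \in B_\gamma$, an independent small chance that $N_e$ falls in the narrow slab just above $s$. This conditional independence is exactly what makes the Chernoff estimate applicable, and it is the key observation that replaces the uniform per-edge perturbation lower bound available in the continuous case via $B_\delta$.
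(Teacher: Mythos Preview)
Your proof is correct and follows essentially the same approach as the paper: condition on $(t_e)_e$ to freeze the geodesic $\gamma$ and its $b$-edge set, invoke Proposition~\ref{cor:lotof b edges} to guarantee $|B_\gamma|\ge \alpha n$ with exponentially high probability, observe that conditionally the flip indicators are i.i.d.\ Bernoulli with parameter $\ge c\,r/\sqrt{kn}$, and apply a binomial lower-tail bound. The paper's argument is identical in structure, using Claim~\ref{claim:bin} for the concentration step.
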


We will also need the following large deviation estimate for the binomial distribution.
\begin{claim}{\cite[Theorem A.1.13]{alon2016probabilistic}} \label{claim:bin} For any  $p\in(0,1/2)$ and integer $n\ge 1$ we have
\begin{equation}
    \P \big( \Bin(n,p)\le \frac{np}2 \big) \le e^{-\frac{np}8}.
\end{equation}
\end{claim}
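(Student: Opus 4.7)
The plan is to prove this as a direct application of the standard Chernoff (exponential Markov) inequality for the binomial distribution. Write $X = \Bin(n,p) = \sum_{i=1}^n X_i$ with $X_i$ i.i.d.\ Bernoulli$(p)$. The strategy is the usual one: apply Markov's inequality to $e^{-\lambda X}$ for a well-chosen $\lambda>0$, factor the exponential moment over the independent coordinates, and then optimize (or just make a convenient choice of) $\lambda$.

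Concretely, for any $\lambda>0$,
\begin{equation*}
\P(X \le np/2) = \P\big(e^{-\lambda X} \ge e^{-\lambda np/2}\big) \le e^{\lambda np/2}\,\E[e^{-\lambda X}] = e^{\lambda np/2}\big(1-p+pe^{-\lambda}\big)^n .
\end{equation*}
Using the elementary bound $1+x \le e^x$ with $x=-p(1-e^{-\lambda})$ gives $(1-p+pe^{-\lambda})^n \le \exp(-np(1-e^{-\lambda}))$, so
\begin{equation*}
\P(X \le np/2) \le \exp\!\big(-np\,(1 - e^{-\lambda} - \lambda/2)\big).
\end{equation*}
It remains to choose $\lambda$ so that the coefficient of $np$ is at least $1/8$. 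Optimizing, $\tfrac{d}{d\lambda}(1-e^{-\lambda}-\lambda/2)=e^{-\lambda}-1/2$, so the maximum is at $\lambda = \log 2$, where the value is $\tfrac12-\tfrac12\log 2 \approx 0.153 > 1/8$. This yields the desired bound $\P(\Bin(n,p)\le np/2) \le e^{-np/8}$.

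Equivalently, one can just quote the multiplicative Chernoff bound $\P(X \le (1-\delta)\mu) \le \exp(-\mu\delta^2/2)$ valid for $\delta\in(0,1)$ and $\mu=np$, and plug in $\delta=1/2$, which produces the exponent $-np\cdot(1/4)/2 = -np/8$ on the nose. The hypothesis $p<1/2$ plays no real role in the derivation; it appears in the reference because in the regime $p\ge 1/2$ the event $\{X\le np/2\}$ is a deviation by more than half the mean and even sharper bounds are usually stated separately. There is no genuine obstacle here: the only "content" is tracking the constant $1/8$, which drops out immediately from the standard Cramér optimization for Bernoulli random variables.
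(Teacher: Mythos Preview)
Your argument is correct: the standard Chernoff/Cram\'er bound yields the claim, and both routes you give (direct optimization at $\lambda=\log 2$, or quoting the multiplicative Chernoff bound with $\delta=1/2$) produce the exponent $-np/8$. The paper does not supply its own proof of this claim; it simply cites \cite[Theorem~A.1.13]{alon2016probabilistic}, so there is nothing to compare against and your self-contained derivation is a fine substitute.
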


\begin{proof}[Proof of Lemma \ref{lem:1bis}]
    Let $r\in [0,1]$.
    For Gaussian random variables, we can choose $g_\tau(x)=x+\tau$ in Lemma \ref{lem:MW} (we refer to Claim 2.13 in \cite{dembin2024coalescence}).
    Condition on $(t_e)_e$, note that 
    \begin{equation}
    \begin{split}
        \P \big( t_{e,-r}=a \mid t_e=b \big) &=\P \big( N_e+\tau_r(e)\le s \mid N_e>s \big) \\
        &\ge \frac 1{G(\{b\})}\P \big(  s< N_e\le s+\tau_r(e) \big) \ge c  \frac{r}{\sqrt{kn}}.
        \end{split}
    \end{equation}
    Now note that the random variables $(t_{e,-r})_e$ are still independent when conditioning on $(t_e)_e$.
    Denote by $\gamma$ a geodesic for $T(n,k)$. By Proposition \ref{cor:lotof b edges}, with high probability $\gamma$ contains at least $\alpha n$ edges with value $b$.
    Let $E:=\{e\in \gamma : t_e =b\}$ be this random set of edges.  By Claim~\ref{claim:bin}, we have
    \begin{equation}\label{eq:bin}
        \P\Big(\sum_{e\in E}\mathds{1}_{t_{e,-r}=a}\le \frac c 2 \frac{r}{\sqrt{kn}}|E| \ \big| \ (t_e)_e\Big)\le e^{-\frac c 8\frac{r}{\sqrt{kn}}|E|}.
    \end{equation}
    Note that on the event
    \[\{|E|\ge \alpha n\}\cap\left\{\sum_{e\in E}\mathds{1}_{t_{e,-r}=a}\ge \frac c 2\frac{r}{\sqrt{kn}}|E|\right\},\]
    we have
    \begin{equation*}
        T_{-r}(n,k)\le \sum_{e\in \gamma}t_{e,-r}\le \sum_{e\in \gamma}t_{e}-(b-a)\frac c 2\frac{r}{\sqrt{kn}}|E|\le T(n,k)-\alpha\frac c 2 (b-a)r\sqrt{\frac n k}.
    \end{equation*}
    The result follows thanks to inequality \eqref{eq:bin} together with Proposition \ref{cor:lotof b edges}.
\end{proof}
The following corollary can be proved similarly as Corollary \ref{cor:2}.
\begin{cor}\label{cor:2bis}There exists $c>0$ such that for any $s\in[-1,1]$
for any $ r\in [0,1]$ we have that 
\begin{equation}
\mathbb P \Big( T_s(n,k)-T_{s-r}(n,k)\le cr\sqrt{\frac n k}  \Big) \le e^{-cr \sqrt{\frac nk}}.   \end{equation}
    
\end{cor}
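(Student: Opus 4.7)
The plan is to mimic the proof of Corollary \ref{cor:2}, transferring the unperturbed bound of Lemma \ref{lem:1bis} to the $s$-perturbed environment via the Mermin--Wagner estimate (Lemma \ref{lem:MW}). The key point is that in this subsection the randomness lives in the i.i.d.\ standard Gaussians $(N_e)_e$, for which the family of bijections takes the simple form $g_{\tau}(x)=x+\tau$, so Mermin--Wagner applies directly at the level of the Gaussians (as it did in the proof of Lemma \ref{lem:1bis}).

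For the Borel set required by Lemma \ref{lem:MW}, I would take
\[
A := \Big\{(x_e)_{e}\in\mathbb{R}^{E(n,2k)} : F_s((x_e)_e) - F_{s-r}((x_e)_e) \le cr\sqrt{n/k} \Big\},
\]
where $F_u((x_e)_e) := \inf_{p\in\mathcal{P}} \sum_{e\in p} t_e(x_e+\tau_u(e))$, so that the event of the corollary is exactly $\{(N_e)_e\in A\}$. Applying Lemma \ref{lem:MW} with both H\"older exponents equal to $2$, and with shift vector $-\tau_s$, and using that $\|\tau_s\|_2^2$ is bounded by an absolute constant since $|E(n,2k)|\le 4nk$, I would obtain
\[
\mathbb{P}\big((N_e)_e\in A\big) \le e^{C}\,\mathbb{P}\big((N_e-\tau_s(e))_e\in A\big)^{1/2}\,\mathbb{P}\big((N_e+\tau_s(e))_e\in A\big)^{1/2}.
\]

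The final step is to identify the two resulting events. Using the additivity $\tau_u+\tau_v=\tau_{u+v}$, a direct computation shows that $\{(N_e-\tau_s(e))_e\in A\}$ coincides with the event $\{T(n,k)-T_{-r}(n,k)\le cr\sqrt{n/k}\}$ of Lemma \ref{lem:1bis}, whose probability is at most $e^{-cr\sqrt{n/k}}$. The other event $\{(N_e+\tau_s(e))_e\in A\}$ becomes $\{T_{2s}(n,k)-T_{2s-r}(n,k)\le cr\sqrt{n/k}\}$, which I bound trivially by $1$. Combining these yields $\mathbb{P}((N_e)_e\in A)\le e^{C-cr\sqrt{n/k}/2}$, and the desired bound follows after absorbing the constant by shrinking $c$ (the claimed bound is trivial in the regime where $cr\sqrt{n/k}=O(1)$).

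The only subtlety compared with Corollary \ref{cor:2} is that Lemma \ref{lem:1bis} controls only the infimum $T(n,k)-T_{-r}(n,k)$, rather than a ``for all paths'' event of the type handled by Lemma \ref{lem:111}. Consequently, the set $A$ cannot be written path-by-path as in the proof of Corollary \ref{cor:2}. This, however, is not an obstacle: Lemma \ref{lem:MW} holds for arbitrary Borel subsets of $\mathbb{R}^{E(n,2k)}$, and because the shift by $\tau_s$ is applied at the Gaussian level, the composition property $\tau_u+\tau_v=\tau_{u+v}$ allows the shifted events to be identified cleanly with events of the form $\{T_{u}(n,k)-T_{u-r}(n,k)\le cr\sqrt{n/k}\}$. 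I do not expect any further obstacle beyond bookkeeping of constants.
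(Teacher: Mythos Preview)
Your proposal is correct and follows exactly the route the paper intends: the paper simply states that Corollary~\ref{cor:2bis} ``can be proved similarly as Corollary~\ref{cor:2}'', and your argument is precisely that---apply Lemma~\ref{lem:MW} at the Gaussian level with $p=q=2$ and shift $\tau_s$, identify the shifted event with the event of Lemma~\ref{lem:1bis}, bound the other factor by $1$, and absorb the constant. Your remark that here the event $A$ must be phrased in terms of the infimum $T_u(n,k)$ rather than path-by-path (because Lemma~\ref{lem:1bis}, unlike Lemma~\ref{lem:111}, only controls the infimum) is the one point where the bookkeeping genuinely differs from Corollary~\ref{cor:2}, and you handle it correctly.
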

The proof of Proposition \ref{prop:MW2} will follow from the next lemma with a similar proof as in the proof of Proposition \ref{prop:MW}.

\begin{lem}\label{cor:1bis}
    There exists $C>0$ depending only on $G$ such that for any $a\ge 0$ and $\ep>0$ we have 
    \begin{equation}
        \int _{-1}^1 \mathbb P \big(  T_r(n,k) \in [a,a+1] \big) dr \le Ce^{C/\ep}\left(\log \frac nk\right) \P(T(n,k)\le a+1)^{1-\ep}\sqrt{\frac kn}+\left(\frac kn\right)^{2} .
    \end{equation}
\end{lem}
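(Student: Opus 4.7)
The plan is to mirror the proof of Lemma~\ref{cor:1}, using Corollary~\ref{cor:2bis} in place of Corollary~\ref{cor:2} and recalibrating the grid spacing to account for the weaker tail bound. Set
\begin{equation*}
    r_0 := M\,\big(\log(n/k)\big)\sqrt{k/n}
\end{equation*}
for a sufficiently large absolute constant $M$ (depending only on $G$), and let $R := r_0\mathbb Z \cap [-1,1]$. Define the good event
\begin{equation*}
    \Omega := \bigcap_{r\in R}\big\{ T_{r+r_0}(n,k)-T_r(n,k) > 4 \big\}.
\end{equation*}
Applying Corollary~\ref{cor:2bis} with shift $r_0$ (after translating $s\mapsto s+r_0$) gives, for each $r\in R$, the tail bound $e^{-cr_0\sqrt{n/k}} = (n/k)^{-cM}$, provided $M$ is large enough that $cr_0\sqrt{n/k}>4$. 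A union bound over $|R|\le 2/r_0\le \sqrt{n/k}$ then yields $\P(\Omega^c) \le (k/n)^2$ once $cM$ is chosen large enough.

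Next I would verify, exactly as in Lemma~\ref{cor:1}, that on $\Omega$ the Lebesgue measure of $\{r\in[-1,1] : T_r(n,k)\in[a,a+1]\}$ is at most $2r_0$. The key inputs are monotonicity of $r\mapsto T_r(n,k)$ (which holds since $\tau_r(e)$ is increasing in $r$ and the edge-weight map $x\mapsto t_e(x)$ is non-decreasing) together with the fact that on $\Omega$ two grid points at distance $\ge r_0$ give passage times differing by more than $4$, so at most one interval of length $r_0$ in the grid can contribute. Moreover, using monotonicity again, the event $\{T_r(n,k)\in[a,a+1]\}$ for some $r\in[-1,1]$ forces $T_{-1}(n,k)\le a+1$. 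Plugging in via Fubini gives
\begin{equation*}
    \int_{-1}^1 \P\big( T_r(n,k)\in[a,a+1]\big)\,dr \le \P(\Omega^c) + 2r_0\,\P\big( T_{-1}(n,k)\le a+1\big).
\end{equation*}

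Finally, I would apply Lemma~\ref{lem:MW} to the Gaussian variables $(N_e)_e$ (for which $g_\tau(x)=x+\tau$), with the same choice $1/q=1-\epsilon$, $1/p=\epsilon$, and the vector $\sigma_e := \tau_{p/q}(e)$, using as set $A$ the collection of Gaussian configurations for which the $(-1)$-perturbation produces some path in $\mathcal P$ of weight $\le a+1$. As in~\eqref{eq:MWT-1}, this yields $\P(T_{-1}(n,k)\le a+1) \le e^{C/\epsilon}\,\P(T(n,k)\le a+1)^{1-\epsilon}$. Combining with the two bounds above gives the claimed inequality, since $2r_0\le C(\log(n/k))\sqrt{k/n}$.

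The main obstacle, relative to the continuous case, is step~1: the tail bound from Corollary~\ref{cor:2bis} is only $e^{-cr_0\sqrt{n/k}}$, in contrast to the $e^{-n/2}$ bound available for continuous weights in Corollary~\ref{cor:2}. To afford a union bound over $R$ while controlling $\P(\Omega^c)$ by $(k/n)^2$, the grid spacing $r_0$ must grow by a factor of $\log(n/k)$ beyond the natural scale $\sqrt{k/n}$. This enlargement is precisely the source of the extra $\log(n/k)$ factor appearing in the statement of Lemma~\ref{cor:1bis}, and everything else in the argument carries over unchanged from the continuous setting.
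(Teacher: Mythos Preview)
Your proof is correct and follows essentially the same approach as the paper's own proof: the grid spacing $r_0 \asymp \log(n/k)\sqrt{k/n}$, the good event $\Omega$ controlled via Corollary~\ref{cor:2bis} and a union bound, the Fubini step combined with monotonicity of $r\mapsto T_r(n,k)$, and the Mermin--Wagner transfer from $T_{-1}$ to $T$ are all identical to the paper's argument. The only cosmetic difference is that the paper writes the threshold in $\Omega$ as $cC\log(n/k)$ rather than your fixed~$4$ (and uses $T_r-T_{r-r_0}$ instead of $T_{r+r_0}-T_r$), which is immaterial.
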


\begin{proof} Let $c$ be the constant from Corollary \ref{cor:2bis} and let $C$ be a large enough constant depending on $c$.
Let $r_0:=  C\log \frac nk\sqrt {\frac k n} $  and let $R:=r_0\mathbb Z \cap [-1,1]$. Define the event
\begin{equation}
    \Omega :=\bigcap _{r\in  R}  \Big\{ T_r(n,k)-T_{r-r_0}(n,k)\ge cC\log \frac nk\Big\}.  
\end{equation}
Using Corollary~\ref{cor:2bis} and a union bound we obtain for $C$ large enough
\begin{equation}\label{eq:Omegabis}
\mathbb P (\Omega ^c )\le  C \sum _{r\in R} e^{-cC\log \frac nk}   \le \sqrt{\frac nk }e^{-cC\log \frac nk}  \le \left(\frac kn\right)^{2} .
\end{equation}
Next, we write 
\begin{equation}
        \int _{-1}^1 \mathbb P \big(  T_r(n,k) \in [a,a+1] \big) dr \le \mathbb P (\Omega ^c)+  \int _{-1}^1 \mathbb P \big( \Omega , \ T_r(n,k) \in [a,a+1] \big) dr.
    \end{equation}
Let us bound the second term on the right hand side of \eqref{eq:1}.  Using that $T_r(n,k)$ is increasing in $r$ we obtain that on $\Omega$ there is at most one element $r\in R$ for which $T_r(n,k)\in [a,a+1]$ and that \[\big| \big\{r\in [-1,1] : T_r(n,k)\in [a,a+1] \big\}\big| \le C \Big( \log \frac nk \Big)  \,\sqrt{\frac k n},\] where in here $|\cdot |$ denotes the Lebesgue measure of the set. Moreover, it is easy to check that since $T_r(n,k)$ is increasing in $r$, we have 
 \begin{equation*}
     \big| \big\{r\in [-1,1] : T_r(n,k)\in [a,a+1] \big\}\big|=\big| \big\{r\in [-1,1] : T_r(n,k)\in [a,a+1] \big\}\big|\mathds{1}_{T_{-1}(n,k)\le a+1}.
 \end{equation*}
 Hence, using Fubini's theorem
    \begin{equation}\label{eq:1bis2}
    \begin{split}
        \int _{-1}^1 \mathbb P \big( \Omega , \ T_r(n,k) \in [a,a+1] \big) dr&= \mathbb E \Big[ \mathds 1 _{\Omega }\big| \big\{r\in [0,1] : T_r(n,k)\in [a,a+1] \big\}\big| \Big] \\&\le C\P \big( T_{-1}(n,k)\le a+1 \big)\Big( \log \frac nk \Big)\sqrt{\frac kn}.
        \end{split}
    \end{equation}
   We obtain similarly as in \eqref{eq:MWT-1} that 
   \begin{equation*}
    \P(T_{-1}(n,k)\le a+1)\le e^{C/\ep}\P(T(n,k)\le a+1)^{1-\ep}.
\end{equation*}
Substituting this estimate into \eqref{eq:1bis2} finishes the proof of the lemma.
\end{proof}

\begin{proof}[Proof of Proposition \ref{cor:lotof b edges}]
    For $e\in E(\mathbb Z ^2)$, define the new weights $t_e^+:=t_e+\mathds 1\{t_e=b\}$. Let $\mu, \mu ^+ :\mathbb R ^2\to \mathbb [0,\infty ]$  be the limiting norms for the old and new environments respectively. By \cite[Theorem~2.12]{50years} we have that $\mu ^+(e_1)>\mu (e_1)$. We will prove the estimate of the proposition with $\alpha := (\mu ^+(e_1)-\mu (e_1))/4$. We have that 
    \begin{equation}
        \mathbb P \big( \text{there exists a geodesic $\gamma $ with }   |\{e\in \gamma :t_e=b\}|\le \alpha n  \big) \le \mathbb P \big( T^+(n,k) \le T(n,k)+\alpha n\big),
    \end{equation}
    where $T^+(n,k)$ is defined as $T(n,k)$ but with the modified environment $(t_e^+)$. Letting $T^+_{x,y}$ be the (unrestricted) passage time in the environment $(t_e^+)$ between $x$ and $y$ we have that $T^+(n,k)\ge \min _{x,y} T^+_{x,y}$ where the minimum is taken over all pairs of points $x\in \{0\}\times [0,2k]$ and $y\in \{n\}\times [0,2k]$. 
    
    Next, we obtain an upper bound on $T(n,k)$. Let $k_0$ sufficiently large such that for all $r\ge k_0$ we have $\mathbb E [T_{0,(r,0)}] \le r(\mu (e_1) +\alpha )$. Let $0\le s_0\le \cdots \le s_m=n$ be a sequence of integers such that for all $i\le m$ we have $k_0 \le s_i-s_{i-1} \le 2k_0$. 
    
    Suppose first that $k\ge 4k_0b/a$. We claim that in this case, we have that $T(n,k)\le \Tilde{T}$ where $\tilde{T}:=\sum _{i=1}^m T_{(s_{i-1},k),(s_{i},k)}$. Indeed, a geodesic from $(s_{i-1},k)$ to $(s_i,k)$ is almost surely of length at most $2k_0b/a$. Thus, concatenating all these geodesics we obtain a path from $(0,k)$ to $(n,k)$ which is almost surely contained in $[-2k_0b/a,n+2k_0b/a]\times [k-2k_0b/a,k+2k_0b/a]$. This path might exit $[0,n]\times [0,2k]$ only near the endpoints but clearly there is a subpath of it which is a proper left-right crossing of $[0,n]\times [0,2k]$ with vertical fluctuations bounded by $4k_0b/a \le k$. Hence, $T(n,k)\le \Tilde{T}$. Substituting the lower bound on $T^+(n,k)$ and the upper bound on $T(n,k)$ we obtain
    \begin{equation}\label{eq:778}
       \mathbb P \big( T^+(n,k) \le T(n,k)+\alpha n\big) \le \sum _{x,y} \mathbb P \big( T^+_{x,y} \le \tilde{T} +\alpha n \big) ,
    \end{equation}
where the sum is over pairs $x\in \{0\}\times [0,2k]$ and $y\in \{n\}\times [0,2k]$. To bound the last probability note that by the definition of $k_0$ we have
\begin{equation}
    \mathbb E [\tilde{T}] \le n(\mu (e_1)+\alpha ) \le n(\mu ^+(e_1)-3\alpha ) \le \mu ^+(x,y)-3\alpha n,
\end{equation}
Where the last inequality holds for all $x\in \{0\}\times [0,2k]$ and $y\in \{n\}\times [0,2k]$ since the limiting norm is symmetric and convex. Thus, for all such $x,y$ we have
    \begin{equation}\label{eq:777}
      \mathbb P \big( T^+_{x,y} \le \tilde{T} +\alpha n \big) \le  \mathbb P \big( |T^+_{x,y}-\mu ^+(x,y)|\ge \alpha n\big) +\mathbb P \big( |\tilde{T}-\mathbb E [\tilde{T}] |\ge \alpha n ).
    \end{equation}
    The first probability on the right hand side of \eqref{eq:777} is exponentially small by Theorem~\ref{thm:Talagrand}. The second probability on the right hand side of \eqref{eq:777} is exponentially small by Azuma's inequality as $\tilde{T}$ depends only on the weights in $[-2k_0b/a,n+2k_0b/a]\times [k-2k_0b/a,k+2k_0b/a]$ as explained above (so $\tilde{T}$ is a Lipschitz function of $O(n)$ i.i.d.\ variables). Substituting these bounds in \eqref{eq:777} and then in \eqref{eq:778} finishes the proof of the corollary when $k\ge 4k_0b/a$.

    Next, suppose that $k\le 4k_0b/a$. In this case, with exponentially high probability, there are linearly many columns of horizontal edges in $[0,n]\times [0,2k]$ with all edges in the column having weight $b$. When this event happens, any left-right crossing will contain linearly many edges of weight $b$.
\end{proof}

\section{Lower bounding the variance}\label{sec 3}
The aim of this section is to prove Theorem \ref{thm:lowerboundvar}.
The idea of the proof is that $\tau(n,k)$ can be written as a minimum of an order $n/k$ of random variables distributed as $T(n,k)$. We can prove that the $\alpha$-quantile $q_\alpha(\tau(n,k))$ of $\tau(n,k)$ is a tail value for $T(n,k)$. Using small ball probability upper-bound in the tail for $T(n,k)$, we can obtain a small ball probability bound for $\tau(n,k)$ and deduce from it a lower bound on the variance.

\begin{lem}\label{lem:cormw}
    Let $n\ge k \ge 1$ and $\alpha,\ep\in(0,1)$, we have
     \begin{equation}\label{eq:lem311}
        \mathbb P \big( T(n,k)<q_\alpha(\tau(n,k))\big) \le \frac{4k}n|\log(1-\alpha)|
    \end{equation}
    and for any $x\in\mathbb R$
    \begin{equation}\label{eq:lem312}
    \P(\tau(n,k)\in[x,x+1])\le 3\frac n k \P(T(n,k)\in[x,x+1]) .
\end{equation}
\end{lem}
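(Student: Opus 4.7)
The plan is to realize $\tau(n,k)$ as the minimum of at most $3n/k$ random variables each distributed as $T(n,k)$, and to extract a subfamily of at least $\lfloor n/(4k)\rfloor$ of them that are mutually independent.

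For each integer $j\ge 0$ with $jk+2k\le n$, let $\tilde T_j$ denote the passage time of the minimal left-right crossing of the rectangle $[0,n]\times [jk,jk+2k]$ among paths with vertical displacement at most $k$. By translation invariance each $\tilde T_j$ has the same law as $T(n,k)$. Any path counted in $\tau(n,k)$ has vertical displacement at most $k$, so its range of $y$-coordinates lies in $[y_{\min},y_{\min}+k]\subseteq [jk,jk+2k]$ for $j=\lfloor y_{\min}/k\rfloor$; conversely, any path feasible for some $\tilde T_j$ is feasible for $\tau(n,k)$. Including if necessary the endpoint strip $[n-2k,n]$ to handle paths with large $y_{\min}$, this yields
\begin{equation*}
\tau(n,k)=\min_j \tilde T_j,
\end{equation*}
where $j$ ranges over at most $3n/k$ indices. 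The small-ball bound \eqref{eq:lem312} is then immediate from the union bound
\begin{equation*}
\P(\tau(n,k)\in [x,x+1])\le \sum_j \P(\tilde T_j\in[x,x+1])\le \frac{3n}{k}\,\P(T(n,k)\in[x,x+1]).
\end{equation*}

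For \eqref{eq:lem311}, I would extract a subfamily $j_1<\dots<j_M$ of indices satisfying $j_{\ell+1}\ge j_\ell+3$, which makes the corresponding strips pairwise vertex-disjoint; then $\tilde T_{j_1},\dots,\tilde T_{j_M}$ depend on disjoint edge sets and are therefore independent, and one may take $M\ge \lfloor n/(4k)\rfloor$. Since $\tau(n,k)\le \min_\ell \tilde T_{j_\ell}$, applying this at $x=q_\alpha(\tau(n,k))$ together with the quantile inequality $\P(\tau(n,k)\ge q_\alpha(\tau(n,k)))\ge 1-\alpha$ (which follows from the definition of $q_\alpha$ as a supremum together with the right-continuity of the CDF of $\tau(n,k)$) gives
\begin{equation*}
1-\alpha \le \P\bigl(\min_\ell \tilde T_{j_\ell}\ge q_\alpha(\tau(n,k))\bigr) = \bigl(1-\P(T(n,k)<q_\alpha(\tau(n,k)))\bigr)^M.
\end{equation*}
Taking logarithms and using $\log(1-y)\le -y$ then yields $M\cdot \P(T(n,k)<q_\alpha(\tau(n,k)))\le |\log(1-\alpha)|$, which is \eqref{eq:lem311}.

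The only real technical point is the strip bookkeeping: one must cover every possible value of $y_{\min}$ in order to have the identity $\tau(n,k)=\min_j\tilde T_j$ (and hence the union bound pass through), and the subfamily for the independence argument must be spaced with gap at least $k$ so that the strips share no vertices and the $\tilde T_j$'s are genuinely independent. Both are routine and come down to choosing the spacings and offsets appropriately.
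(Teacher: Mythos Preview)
Your proposal is correct and follows essentially the same approach as the paper: decompose $[0,n]^2$ into overlapping height-$2k$ strips so that $\tau(n,k)$ is the minimum of at most $3n/k$ copies of $T(n,k)$, use a union bound for \eqref{eq:lem312}, and extract $\Theta(n/k)$ vertex-disjoint strips for the independence argument behind \eqref{eq:lem311}. The only cosmetic difference is that the paper spaces its strips by $2k$ and then adds a second family shifted by $k$, whereas you space directly by $k$; the resulting covers and counts are the same.
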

\begin{proof}
    For any rectangle $R=[0,n]\times [s,t]$ with $s\le t$, we define $T_k(R)$ to be the smallest passage time between the left and the right side of $R$ with vertical displacement at most $k$.
It is easy to check that there exists $m\le 3n/k$ such that we can find $m$ translates $R_1,\dots,R_m$ of the rectangle $[0,n]\times [0,2k]$ such that
\[\tau(n,k)=\min_{i\in\{1,\dots,m\}}T_k(R_i).\]
To see that it is enough to consider the $n/2k$ disjoint rectangles of width $2k$ and their translates by $k$ strictly included in $[0,n]^2$. More precisely, set $m_0=\lfloor n/2k\rfloor$ and consider $[0,n]\times [2ki,2k(i+1)]$,  $0\le i\le m_0-1$, $[0,n]\times [n-2k,n]$, $[0,n]\times [2ki+k,2k(i+1)+k]$,  $0\le i\le m_0-2$ and $[0,n]\times [n-3k,n-k]$. For this family, any path in $\cP_k(n)$ has to lie strictly in at least one of the $R_i$ yielding
\[\tau(n,k)\ge \min_{i\in\{1,\dots,m\}}T_k(R_i).\]
Conversely, since the $R_i$ are strictly contained in $[0,n]^2$, it gives the converse inequality.  

We have
\[\P(\tau(n,k)\ge  q_\alpha(\tau(n,k)))\ge 1-\alpha.\]
Since, we can find $\lfloor m_0/2\rfloor$ disjoint translates of $R_1$, we have
\begin{equation*}
    \P\big( \tau(n,k)\ge  q_\alpha(\tau(n,k)) \big) \le \P \big( T_k(R_1)\ge q_\alpha(\tau(n,k)) \big)^{n/4k} 
\end{equation*}
Hence,
\begin{equation}\label{eq:quantile estimate}
    \P(T_k(R_1)\ge q_\alpha(\tau(n,k)))\ge \exp\left(\frac {4k} n \log(1-\alpha)\right)\ge 1- \frac {4k} n |\log(1-\alpha)|.
\end{equation}
 This completes the proof of \eqref{eq:lem311}. 

Since the geodesics between left-right with constrained vertical fluctuations have to lie strictly in one of the rectangle $R_1,\dots, R_m$, we have that
\begin{equation*}
    \{\tau(n,k)\in[x,x+1]\}\subset \bigcup_{i=1,\dots,m}\{T_k(R_i)\in[x,x+1]\}
\end{equation*}
yielding the following bound
\begin{equation*}
    \P(\tau(n,k)\in[x,x+1])\le 3\frac n k \P(T_k(R_1)\in[x,x+1]) .
\end{equation*}
\end{proof}

\begin{proof}[Proof of Theorem \ref{thm:lowerboundvar}]
We assume here that $G$ satisfies assumption \eqref{eq:assumption abs1}.
Let $x\le q_\alpha(\tau(n,k))-1$.
By Proposition \ref{prop:MW}, we have
 \begin{equation}
        \mathbb P \big( T(n,k)\in [x,x+1] \big) \le Ce^{C/\ep}\sqrt{\frac k n} \mathbb P \big( T(n,k)\le x+1 \big)^{1-\ep}+e^{-c\sqrt n}.
    \end{equation}
It yields combining the previous inequality together with Lemma \ref{lem:cormw} \eqref{eq:lem311}
   \begin{equation}
        \mathbb P \big( T(n,k)\in [x,x+1] \big) \le Ce^{C/\ep}\left(\frac k n\right)^{\frac 32- \ep}|\log(1-\alpha)|+e^{-c\sqrt n}.
    \end{equation}
It follows using Lemma \ref{lem:cormw} \eqref{eq:lem312}, for $n$ large enough depending in $\alpha$
\begin{equation*}
    \P(\tau(n,k)\in[x ,x+1))\le \left(\frac kn\right)^{\frac 12-\ep} Ce^{C/\ep}|\log(1-\alpha)|.
\end{equation*}
For short write $q_\alpha$ for $q_\alpha(\tau(n,k))$.
By applying the previous inequality several times for $x\in [q_\alpha-ce^{-C/\ep}\left(\frac nk\right)^{\frac 12-\ep} ,q_\alpha-1]\cap \Z$, we have
\begin{equation*}
    \P(\tau(n,k)\in[q_\alpha-ce^{-C/\ep}\left(\frac nk\right)^{\frac 12-\ep} ,q_\alpha])\le |\log(1-\alpha)|.
\end{equation*}
Hence, we have by choosing $\alpha=1/2$
\begin{equation*}
    \var(\tau(n,k))\ge ce^{-C/\ep}\left(\frac nk\right)^{1-2\ep}.
\end{equation*}
Finally, by optimizing in $\ep$ we get 
\begin{equation*}
    \var(\tau(n,k))\ge ce^{-C\sqrt{\log \frac nk }}\left(\frac nk\right).
\end{equation*}
Let us now assume \eqref{eq:assumption atomic}. By Proposition \ref{prop:MW2}, we have
\[ \mathbb P \big( T(n,k)\in [x,x+1] \big) \le C\log \frac nk e^{C/\ep}\P(T(n,k)\le x+1)^{1-\ep}\sqrt{\frac kn}+\left(\frac kn\right)^{2}.\]
By similar computations as above, we get
\begin{equation*}
    \var(\tau(n,k))\ge ce^{-C/\ep}\left(\log \frac nk\right)^{-1}\left(\frac nk\right)^{1-2\ep}.
\end{equation*}
Optimizing in $\ep$ yields
\begin{equation}\label{eq:1 var}
    \var(\tau(n,k))\ge ce^{-C\sqrt{\log \frac nk }}\left(\frac nk\right),
\end{equation}
finishing the proof of the first part.

For the second part of the theorem we further assume that assumptions \eqref{ass:uniform curbature} and \eqref{eq:assumption exp1} hold.
Let $\ep>0$. Denote for short ($\tau$, $T$) and $(\tau',T')$ two independent copies of $(\tau(n,n^{3/4+\ep}),T_n)$.
We have
\begin{equation}
    \begin{split}
2\var(\tau(n,n^{3/4+\ep}))&=\E[(\tau-\tau')^2]\le\var(T_n)+ 2\E[(\tau^2+\tau'^2)(\mathbf{1}_{T\ne \tau}+ \mathbf{1}_{T'\ne \tau'})]\\
    &\le \var(T_n)+8\sqrt{\E[\tau(n,n^{3/4+\ep})^4]}\sqrt{\P(T_n\ne\tau(n,n^{3/4+\ep})) }
    \end{split}
\end{equation}
where we used Cauchy-Schwarz in the last inequality.
We conclude using Proposition \ref{prop:6.2} and \eqref{eq:1 var}.
\end{proof}

\section{Noise sensitivity for being above a given quantile}\label{sec 4}
In this section, we prove Theorem \ref{thm:noisesensitivity}.
Consider in this section distributions $G$ that satisfy \eqref{eq:assumption atomic}.
Let $\alpha\in(0,1)$. Instead of directly proving that the sequence $(\mathds{1}_{\tau(n,k)\le q_\alpha(\tau(n,k))})$ is noise sensitive, we will first prove that another sequence with more symmetry is noise sensitive by using BKS theorem and then prove that the values of these two sequences agree with high probability.
Define $\widetilde \tau(n,k)$ to be the value corresponding to $\tau(n,k)$ when the top and the bottom of the square $[0,n]^2$ are glued (we identify the vertices $(i,0)$ and $(i,n)$ for $i\in\{0,\dots,n\}$). Recall that $\mathcal P_k(n)$ is the set of paths from left to right in $[0,n]^2$ with vertical displacement at most $k$. Analogously, 
$\widetilde \cP_k(n)$ denotes the set of paths from the left to the right of the cylinder $[0,n]\times \Z/n\Z$ with vertical displacement at most $k$. 
To assign a weight to a path $p\in  \widetilde \cP_k(n)$, we assign the weight $t_e$ when $e$ is an edge in $[0,n]\times [0,n-1]$ and we assign the weight  $t_{\{(n-1,j),(n,j)\}}$ for the edge $\{(0,j),(n-1,j)\}$, $j\in\{0,\dots,n\}$. 

The random variable $\widetilde\tau(n,k)$ has more symmetry. Specifically, the influence of two edges that are vertical translations of each other on the function $\mathds{1}_{\widetilde \tau(n,k) \le q_\alpha(\tau(n,k))}$ is the same. Theorem \ref{thm:noisesensitivity} will follow easily from the following proposition and lemmas.
We first prove that the auxiliary sequence is noise sensitive using BKS theorem (see Proposition \ref{prop:noise sensitive}) then we prove that with high probability $\tau(n,k)=\widetilde \tau(n,k)$
(see Lemma \ref{lem:seq agree}). Finally, we prove that if a sequence is equal, with high probability, to a sequence that is noise sensitive, it must also be noise sensitive (see Lemma \ref{lem:noise sensitive}).
\begin{prop}\label{prop:noise sensitive}There exists $C>0$ such that the following holds. For any $\alpha\in(0,1)$. Let $n\ge 1$ and $(k_n)_{n\ge1}$ be a sequence such that $k_n\le n^{1/2}e^{-C\sqrt {\log n}}$. The sequence $(\mathds{1}_{\widetilde \tau(n,k_n)\le q_\alpha(\tau(n,k))})_n$ is noise sensitive.

For any $\epsilon >0$ there exists $s>0$ such that assuming \eqref{ass:sides} then for $k_n\le n^{1-\epsilon }$ the sequence $(\mathds{1}_{\tau(n,k_n)\le q_\alpha(\tau(n,k_n))})_{n\ge 1}$ is noise sensitive.
    \end{prop}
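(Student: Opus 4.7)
The plan is to apply the Benjamini--Kalai--Schramm theorem, so I aim to show $\sum_{e}\Inf_e(f_n)^2\to 0$ as $n\to\infty$. The proof proceeds by working with the more symmetric $\widetilde\tau$: Part~(1) will follow directly, and Part~(2) will additionally use the subsequent lemmas to transfer noise sensitivity from $\widetilde\tau$ to $\tau$.

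\emph{Pivotal analysis.} Since the weights take only the two values $a<b$, flipping a single edge changes $\widetilde\tau(n,k_n)$ by at most $b-a$, and only when the edge lies on a geodesic in the original or flipped environment. For $e$ to be pivotal for the threshold indicator, $\widetilde\tau(n,k_n)$ must therefore lie in the window $E_n:=\{\widetilde\tau(n,k_n)\in[q_\alpha-(b-a),q_\alpha+(b-a)]\}$, giving
\[
\Inf_e(f_n)\le C\,\P(e\in\widetilde\gamma,\,E_n),
\]
with $\widetilde\gamma$ a geodesic of $\widetilde\tau$.

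\emph{Using the vertical symmetry.} The cylinder definition of $\widetilde\tau$ is invariant under vertical translations, so all horizontal edges in a fixed column $c$ share a common influence $I_c$, and similarly for vertical edges. Writing $Y_c$ for the number of edges of $\widetilde\gamma$ in column $c$, summing influences across a column gives $n I_c\le C\,\mathbb E[Y_c\mathds{1}_{E_n}]$. Combining with $\sum_e\Inf_e^2\le (\max_e \Inf_e)\sum_e\Inf_e$ and the a.s.\ bound $|\widetilde\gamma|\le (b/a)n$, I obtain
\[
\sum_e\Inf_e(f_n)^2\le C\,\bigl(\max_c \mathbb E[Y_c]\bigr)\,\P(E_n).
\]

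\emph{The two ingredients.} The small-ball bound follows from Proposition~\ref{prop:MW2} applied to $T(n,k_n)$ together with Lemma~\ref{lem:cormw} (which transfers the estimate to the window $E_n$ and uses that $q_\alpha(\tau(n,k_n))$ is a tail value for $T(n,k_n)$), giving $\P(E_n)\le C\log(n/k_n)e^{C/\epsilon}(k_n/n)^{1/2-\epsilon}$ for every $\epsilon>0$; optimizing $\epsilon$ yields $\P(E_n)\le (k_n/n)^{1/2}e^{C\sqrt{\log(n/k_n)}}$. The bound on $\max_c\mathbb E[Y_c]$ is supplied by Section~\ref{sec 5}. In Part~(1), no limit-shape hypothesis is used: the estimate from Section~\ref{sec 5} combined with the small-ball bound yields $\sum_e\Inf_e^2\to 0$ in the range $k_n\le n^{1/2}e^{-C\sqrt{\log n}}$. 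In Part~(2), assumption~\eqref{ass:sides} with $s$ sufficiently large upgrades the estimate to $\max_c\mathbb E[Y_c]\le n^{\epsilon'}$ for arbitrarily small $\epsilon'$, which combined with the range $k_n\le n^{1-\epsilon}$ gives the same conclusion. Finally, since Part~(2) is stated for $\tau$, I invoke the forthcoming agreement lemma ($\tau=\widetilde\tau$ with high probability in this regime) and a transfer lemma (noise sensitivity is preserved under high-probability agreement of two indicator sequences) to conclude.

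\emph{Main obstacle.} The delicate step is controlling $\max_c\mathbb E[Y_c]$, the expected intersection of the geodesic with a vertical line. Without a curvature or polygonal-limit-shape input, the geodesic can a priori revisit a given column many times, so the sharpness of this bound is precisely what governs the achievable range of $k_n$ in each regime. This geometric input is the content of Section~\ref{sec 5}.
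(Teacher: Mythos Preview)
Your strategy---BKS, pivotal analysis, vertical symmetry, and the bound $\sum_e\Inf_e^2\le(\max_e\Inf_e)(\sum_e\Inf_e)$---matches the paper's. But there is a genuine gap in Part~(1). In passing from $nI_c\le C\,\mathbb E[Y_c\mathds 1_{E_n}]$ to your displayed bound you replaced $\mathbb E[Y_c\mathds 1_{E_n}]$ by $\mathbb E[Y_c]$, throwing away a factor of $\P(E_n)$. This is not innocuous: Section~\ref{sec 5} provides \emph{no} unconditional bound on $\mathbb E[Y_c]$ (both Proposition~\ref{prop:6.2} and Proposition~\ref{cor:number of sides} require \eqref{ass:uniform curbature} or \eqref{ass:sides}), so your appeal to ``the estimate from Section~\ref{sec 5}'' for Part~(1) is vacuous. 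The only unconditional input is the trivial almost-sure bound $Y_c\le Ck$, coming from the fact that the geodesic lies in a strip of height $O(k)$. Plugging this into your inequality gives $\sum_e\Inf_e^2\le Ck\,\P(E_n)\le Ck(k/n)^{1/2}e^{C\sqrt{\log n}}$, which tends to $0$ only for $k_n\ll n^{1/3}$, not $n^{1/2}$.

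The fix is simply to keep the indicator: the correct (and equally easy) bound is
\[
\sum_e\Inf_e(f_n)^2\le C\,\bigl(\max_c \mathbb E[Y_c\mathds 1_{E_n}]\bigr)\,\P(E_n),
\]
and then the trivial $Y_c\le Ck$ gives $\mathbb E[Y_c\mathds 1_{E_n}]\le Ck\,\P(E_n)$, hence $\sum_e\Inf_e^2\le Ck\,\P(E_n)^2\le C(k^2/n)e^{C\sqrt{\log(n/k)}}$, which is exactly the paper's bound and which vanishes for $k_n\le n^{1/2}e^{-C\sqrt{\log n}}$. This is also how the paper phrases the column-intersection input in the conditional case: Lemma~\ref{lem:intersection column} bounds $\mathbb E[|\pi\cap\mathfrak C(e)|\mathds 1_{E_n}]$ by $K_n\P(E_n)$ (plus a negligible error), not $\mathbb E[|\pi\cap\mathfrak C(e)|]$ alone. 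For Part~(2) your weaker inequality happens to be enough, since $K_n\le n^{\epsilon'}$ with $\epsilon'$ arbitrarily small absorbs the loss; but the formulation with the indicator is what is actually needed, and it is what makes Part~(1) work without any limit-shape hypothesis.
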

    Then we prove that the sequences agree with high probability and that we can deduce the noise sensitivity of the original sequence. 
    \begin{lem}\label{lem:seq agree}
Let $n\ge k\ge1$.
    \begin{equation}
        \P \big( \tau(n,k)\ne\widetilde \tau(n,k) \big) \le C\frac k n .
    \end{equation}
\end{lem}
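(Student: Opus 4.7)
The plan is to exploit the cyclic symmetry of the cylinder. Every path in $\cP_k(n)$ gives a path in $\widetilde{\cP}_k(n)$ with the same vertical displacement, namely one not using any ``wraparound'' edge $\{(i,n-1),(i,0)\}$; thus $\cP_k(n) \subseteq \widetilde{\cP}_k(n)$, and so $\widetilde{\tau}(n,k) \le \tau(n,k)$ deterministically. The two agree precisely when some $\widetilde{\tau}$-minimizing path avoids all wraparound edges, so $\tau \ne \widetilde{\tau}$ is equivalent to every geodesic for $\widetilde{\tau}(n,k)$ crossing the half-integer height $n - \tfrac12$ on the cylinder.

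For each $h \in \{\tfrac12, \tfrac32, \ldots, n-\tfrac12\}$, let $A_h$ be the event that every geodesic for $\widetilde{\tau}(n,k)$ uses at least one edge crossing height $h$, and set $H := \{h : A_h \text{ holds}\}$. Equivalently, $h \in H$ iff deleting all cylinder edges that cross height $h$ strictly increases the minimum, so that by the previous paragraph $\{\tau \ne \widetilde{\tau}\} = A_{n-1/2} = \{n-\tfrac12 \in H\}$. Because the weight distribution on the cylinder is invariant under cyclic vertical shifts and the random set $H$ transforms equivariantly with the environment, $\P(h \in H)$ does not depend on $h$, yielding
\[\P(\tau \ne \widetilde{\tau}) \;=\; \P\bigl(n-\tfrac12 \in H\bigr) \;=\; \frac{1}{n}\,\E[|H|].\]

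To conclude I would bound $|H|$ deterministically by $k$. Fix any geodesic $p$ for $\widetilde{\tau}(n,k)$; since $p \in \widetilde{\cP}_k(n)$, its cylindrical vertical range has length at most $k$, so $p$ crosses at most $k$ distinct half-integer heights. For any $h \in H$ every geodesic must cross $h$, so in particular $p$ does, and hence $H$ is contained in the set of (at most $k$) heights crossed by $p$. Therefore $|H| \le k$ almost surely, and combined with the displayed identity this gives $\P(\tau \ne \widetilde{\tau}) \le k/n$, which is the lemma with $C = 1$. The only step requiring care is the equivariance identity used to pass from a single height to the averaged count $\E[|H|]/n$; the combinatorial bound $|H| \le k$ is automatic from the vertical-displacement constraint, and I do not expect any serious obstacle here.
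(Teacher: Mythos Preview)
Your averaging argument via cyclic symmetry correctly gives $\P(A_{n-1/2}) \le k/n$, and this is a clean alternative to the paper's rectangle-exchangeability bound for $\P(\widetilde\gamma \notin \cP_k(n))$. But the deterministic inequality $\widetilde\tau(n,k) \le \tau(n,k)$ is false under the paper's cylinder, and this breaks the identification $\{\tau \ne \widetilde\tau\} = A_{n-1/2}$. Identifying $(i,0)$ with $(i,n)$ merges row $0$ and row $n$ of the square into a single cylinder row whose horizontal edges carry the row-$0$ weights; the weights $t_{\{(i,n),(i+1,n)\}}$ are never seen by $\widetilde\tau$. A path in $\cP_k(n)$ that walks along row $n$ therefore does \emph{not} correspond to a cylinder path of the same weight, and $\tau < \widetilde\tau$ occurs with positive probability (e.g.\ set every row-$n$ horizontal weight to $a$ and every other weight to $b$). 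This is exactly why the paper's proof checks \emph{two} conditions, one for $\gamma$ and one for $\widetilde\gamma$.

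The correct containment is $\{\tau \ne \widetilde\tau\} \subseteq A_{n-1/2} \cup \{\tau < \widetilde\tau\}$, and on $\{\tau < \widetilde\tau\}$ every $\tau$-geodesic must use a row-$n$ horizontal edge and hence lie in the strip $[0,n]\times[n-k,n]$. Your cylinder argument says nothing about this second event: $\tau$ lives on the square, which has no cyclic vertical symmetry, so the averaging trick does not apply. The paper handles both events with one device: the offending geodesic is confined to a specific strip containing the seam, so that strip's restricted passage time equals the minimum among $\sim n/k$ disjoint i.i.d.\ strips, which has probability $O(k/n)$ by exchangeability. You would need to supply this (or an analogous) bound for $\P(\tau<\widetilde\tau)$ to close the gap.
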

\begin{lem}\label{lem:noise sensitive}Let $(f_n)_n$ be a noise sensitive sequence and let $(g_n)_n$ be such that \[\lim_{n\rightarrow\infty}\P(f_n\ne g_n)=0.\]
Then, the sequence $(g_n)_n$ is noise sensitive.
    
\end{lem}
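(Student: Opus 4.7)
The plan is to verify noise sensitivity of $(g_n)_n$ directly from the definition, by controlling each of the three quantities $\E[g_n(t)g_n(t^\ep)]$, $\E[g_n(t)]^2$, and $\E[g_n(t)g_n(t^\ep)]-\E[g_n(t)]^2$ through comparison with the corresponding $f_n$ quantities. The key preliminary observation is that $t^\ep$ has the same marginal distribution as $t$, so the hypothesis $\P(f_n\ne g_n)\to 0$ immediately yields $\P(f_n(t^\ep)\ne g_n(t^\ep))\to 0$ as well. Since $f_n,g_n\in\{0,1\}$, on the event $\{f_n(t)=g_n(t)\}\cap\{f_n(t^\ep)=g_n(t^\ep)\}$ the products $f_n(t)f_n(t^\ep)$ and $g_n(t)g_n(t^\ep)$ coincide.

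The main estimate I would write down is the triangle inequality
\[
\big|\E[g_n(t)g_n(t^\ep)]-\E[f_n(t)f_n(t^\ep)]\big|\le \E\big|g_n(t)-f_n(t)\big|\cdot|g_n(t^\ep)|+|f_n(t)|\cdot\E\big|g_n(t^\ep)-f_n(t^\ep)\big|,
\]
inserting the intermediate term $f_n(t)g_n(t^\ep)$. Bounding the bounded factors by $1$ and using that $|g_n-f_n|=\ind_{g_n\ne f_n}$, this is at most $\P(f_n(t)\ne g_n(t))+\P(f_n(t^\ep)\ne g_n(t^\ep))=2\,\P(f_n\ne g_n)$, which tends to $0$. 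An analogous, simpler estimate gives $|\E[g_n(t)]-\E[f_n(t)]|\le \P(f_n\ne g_n)$, and since both expectations lie in $[0,1]$ this yields $|\E[g_n(t)]^2-\E[f_n(t)]^2|\le 2\,\P(f_n\ne g_n)\to 0$.

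Combining the two bounds with the assumed noise sensitivity of $(f_n)_n$ via one final triangle inequality,
\[
\big|\E[g_n(t)g_n(t^\ep)]-\E[g_n(t)]^2\big|\le \big|\E[f_n(t)f_n(t^\ep)]-\E[f_n(t)]^2\big|+4\,\P(f_n\ne g_n),
\]
shows that the left-hand side tends to $0$ for every $\ep>0$, which is exactly the definition of noise sensitivity for $(g_n)_n$. There is no genuine obstacle here; the lemma is purely an approximation statement exploiting boundedness of boolean functions and the fact that $t$ and $t^\ep$ share the same marginal law, so no structural property of first-passage percolation is needed.
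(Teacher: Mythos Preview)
Your proof is correct and follows essentially the same approach as the paper: both compare $\E[g_n(t)g_n(t^\ep)]$ and $\E[g_n(t)]^2$ to their $f_n$ counterparts using that $|f_n-g_n|=\ind_{f_n\ne g_n}$ and boundedness, then invoke noise sensitivity of $(f_n)_n$. Your version is slightly more explicit in noting that $t^\ep$ has the same marginal as $t$, which is indeed the reason $\P(f_n(t^\ep)\ne g_n(t^\ep))\to 0$.
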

The third part of Theorem \ref{thm:noisesensitivity} follows easily by combining Propositions \ref{prop:noise sensitive},  \ref{prop:6.2} and Lemma \ref{lem:noise sensitive}.

We start by proving Proposition \ref{prop:noise sensitive}.
To prove this proposition we will need the two following lemmas.

For $e\in\mathbb E^2$, we define $\mathfrak C(e)$ to be the set of vertical translates of $e$ contained in the cube
$\mathfrak C(e)\coloneqq \{e+k\mathrm e_2: e+k\mathrm e_2\in[0,n]^2\}$. As explained in the introduction, the main obstacle to prove noise sensitivity for any $k\le n e^{-C\sqrt {\log n}}$ is if the geodesic has a too large intersection with $\mathfrak C(e)$. This is something that is not expected to occur though we cannot exclude it without assumptions.

 The following lemma states that the geodesic does not spend too much time in a given column. It is proved using Proposition \ref{cor:number of sides}.
  For short, we write $q_\alpha$ for $q_\alpha(\tau(n,k))$.
\begin{lem}\label{lem:intersection column} For any $\delta>0$, there exist $s\ge 1$ and $C_\delta,c_\delta>0$ such that under assumption \eqref{ass:sides},
    for any edge $e$ in $[0,n]^2$, we have for any $1\le k \le n/4$
 \begin{equation}
    \mathbb E[|\pi\cap \mathfrak C(e)|\mathds {1}_{\widetilde\tau(n,k)\in[q_\alpha-b+a,q_\alpha]}]\le Cn^\delta\P(\widetilde\tau(n,k)\in[q_\alpha-b+a,q_\alpha]) +C_\delta e^{-n^{c_\delta }}
 \end{equation}
 where $\pi$ is the intersection of all paths achieving the infimum in $\widetilde\tau(n,k)$.
\end{lem}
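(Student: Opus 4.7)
My approach is to truncate the random variable $|\pi \cap \mathfrak C(e)|$ at the threshold $n^\delta$ and treat the two regimes separately. Writing $A := \{\widetilde\tau(n,k) \in [q_\alpha - b + a, q_\alpha]\}$ for brevity, I would decompose
\[
\mathbb E\bigl[|\pi \cap \mathfrak C(e)|\,\mathds 1_A\bigr] \le n^\delta\,\mathbb P(A) + \mathbb E\bigl[|\pi \cap \mathfrak C(e)|\,\mathds 1_{\{|\pi \cap \mathfrak C(e)| > n^\delta\}}\bigr].
\]
The first term on the right-hand side already accounts for the $Cn^\delta\,\mathbb P(A)$ contribution in the statement. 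For the second term, since $\mathfrak C(e)$ consists of the vertical translates of the edge $e$ lying in $[0,n]^2$, it contains at most $n+1$ edges, so the trivial bound $|\pi \cap \mathfrak C(e)| \le n + 1$ gives
\[
\mathbb E\bigl[|\pi \cap \mathfrak C(e)|\,\mathds 1_{\{|\pi \cap \mathfrak C(e)| > n^\delta\}}\bigr] \le (n+1)\,\mathbb P\bigl(|\pi \cap \mathfrak C(e)| > n^\delta\bigr).
\]
It thus suffices to show that this tail probability decays like a stretched exponential in $n$.

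For the tail bound, I would invoke Proposition \ref{cor:number of sides} (which is the quantitative limit-shape input from earlier in the paper). Under assumption \eqref{ass:sides} with $s = s(\delta)$ chosen sufficiently large, this proposition should supply the following estimate: with probability at least $1 - C_\delta\, e^{-n^{c_\delta}}$, every geodesic realizing $\widetilde\tau(n,k)$ crosses a given vertical line of $[0,n]^2$ at most $n^\delta$ times. Since $\pi$ is the intersection of all such geodesics, any upper bound on the column intersection of a single geodesic transfers to $\pi$, giving $\mathbb P(|\pi \cap \mathfrak C(e)| > n^\delta) \le C_\delta\, e^{-n^{c_\delta}}$. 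Combined with the trivial bound above, the second term becomes at most $(n+1)C_\delta\, e^{-n^{c_\delta}}$, which may be absorbed into a term of the form $C_\delta'\, e^{-n^{c_\delta'}}$ after a slight adjustment of constants, completing the proof.

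\textbf{Main obstacle.} The bulk of the difficulty is already encapsulated in Proposition \ref{cor:number of sides}: one must convert the geometric hypothesis on the limit shape (possessing at least $s$ sides, hence a form of angular strict convexity on that range of directions) into a quantitative statement about how often a left-right geodesic in the cylinder can revisit a fixed vertical column. The danger is that a geodesic could, in principle, zig-zag heavily near one vertical line without paying much in the way of passage time. The many-sides assumption rules this out by associating to each long vertical excursion a passage-time cost proportional to the length of the excursion; summing these costs over all oscillations eventually exceeds the admissible fluctuations around the quantile $q_\alpha$, giving the required stretched exponential bound. Inside the present lemma, however, I would simply apply Proposition \ref{cor:number of sides} as a black box and combine it with the truncation argument above.
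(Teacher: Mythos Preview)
Your proposal is correct and follows essentially the same approach as the paper: the paper likewise reduces everything to the tail estimate $\mathbb P\bigl(|\pi \cap \mathfrak C(e)| \ge n^{\delta}\bigr) \le C_\delta e^{-n^{c_\delta}}$ and then deduces the lemma by the same truncation you describe. The one detail you gloss over is that Proposition~\ref{cor:number of sides} is stated for a flat rectangle, not for the cylinder; the paper bridges this by observing that $\pi$ necessarily coincides with the intersection $\pi_i$ of all minimal left--right crossings of some cyclically shifted rectangle $R_i$ of height $k$, and then takes a union bound over the $n$ possible positions $i$ before invoking the proposition.
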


\begin{proof}
For $i\le n$, let $R_i$ be the rectangle $[0,n]\times [0,k]$ that is shifted cyclically up by $i$ units in the cylinder $[0,n]^2$. Namely, this is the rectangle whose bottom row is $i$ and whose top row is $k+i(\!\!\!\mod n)$. Let $\pi _i$ be the intersection of all minimal left-right crossing of the rectangle $R_i$. By the definition of the passage time $\widetilde\tau(n,k)$, the intersection $\pi $ is equal to the intersection $\pi_i$ for some $i\le n$ and therefore by Proposition~\ref{cor:number of sides} and a union bound over $i\le n$ we have that 
    \begin{equation}
        \mathbb P \big( |\pi\cap \mathfrak C(e)| \ge n^{\delta } \big)<C_\delta  e^ {-n^{c_\delta }},
    \end{equation}
    as long as \eqref{ass:sides} holds for a sufficiently large $s$ depending on $\delta $. The statement of the lemma follows immediately from the last estimate.
\end{proof}

To prove the Proposition \ref{prop:noise sensitive}, we will need the following optimization result.
\begin{claim}\label{claim:optimization}For any $B\ge A>0$ and integer $m\ge 1$ we have
    \[\sup\left \{\sum_{i=1}^m x_i^2: 0\le x_i\le A, \sum_{i=1}^m x_i\le B\right\}\le \left(\frac {B} A+1\right) A^2\le 2AB .\]
\end{claim}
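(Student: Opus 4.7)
The plan is to exploit the one-sided bound $x_i\le A$ to linearize the sum of squares. Since each $x_i$ satisfies $0\le x_i\le A$, I would first note the pointwise inequality $x_i^2\le A\,x_i$. Summing over $i$ and invoking the budget constraint $\sum_{i=1}^m x_i\le B$ would then immediately give
\[
\sum_{i=1}^m x_i^2 \;\le\; A\sum_{i=1}^m x_i \;\le\; AB.
\]

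Having established $\sum x_i^2\le AB$, the two inequalities stated in the claim follow by purely arithmetic comparisons. For the first, observe that $AB=(B/A)A^2\le (B/A+1)A^2$, so $\sum x_i^2\le (B/A+1)A^2$. For the second, rewrite $(B/A+1)A^2=AB+A^2$ and use the hypothesis $A\le B$, which gives $A^2\le AB$ and hence $(B/A+1)A^2\le 2AB$.

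There is no real obstacle here: the claim is sharp up to a factor of $2$ (the extremizer is obtained by setting $\lfloor B/A\rfloor$ of the variables equal to $A$ and one more equal to the residual, a convexity/extreme-point observation that is not even needed for the stated upper bound). The entire argument fits in two or three lines, so I would simply present it as a short chain of inequalities rather than invoking a Lagrangian or vertex-of-polytope argument. The slight looseness in the bound $(B/A+1)A^2$ versus the direct $AB$ is presumably kept in the statement because that is the form in which the claim will be applied later (with $A$ and $B$ interpreted as an influence bound and a total-influence budget, respectively).
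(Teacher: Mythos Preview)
Your argument is correct. The paper does not supply a proof of this claim at all --- it is stated and immediately used --- so there is nothing to compare against line by line. Your linearization $x_i^2\le A x_i$ in fact yields the sharper bound $\sum_i x_i^2\le AB$ directly, from which both displayed inequalities follow as you note; the intermediate expression $(B/A+1)A^2$ in the paper's statement is presumably a vestige of the extreme-point reasoning you mention (set $\lfloor B/A\rfloor$ coordinates to $A$ and count), which your approach bypasses.
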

The value \( x_i \) represents the influence of an edge; it roughly corresponds to the probability that the edge lies on the geodesic, and the value of \( \widetilde{\tau}(n,k) \) is close to \( q_\alpha \). We will use the vertical symmetry together with the bound on the expected intersection with a column to get a bound on $A$. We will further use a global bound by using the bound on the size of the geodesic to derive a bound on $B$.

\begin{proof}[Proof of Proposition \ref{prop:noise sensitive}]
   We aim to use BKS theorem, so we need to bound the influence of the edges.
Since the fact that $e$ is pivotal does not depend on the value of $t_e$, we have
\begin{equation}\label{eq:6}
\begin{split}
    \Inf_e(f_n)&=\frac 1 {G(\{a\})}\P(\text{$e$ is pivotal and $t_e=a$})\\&\le \frac 1 {G(\{a\})}\P(\text{$e\in\pi$ and $\widetilde \tau(n,k)\in[q_\alpha-b+a,q_\alpha]$})
    \end{split}
\end{equation}
where $\pi$ is the intersection of all geodesics achieving the infimum in $\widetilde \tau(n,k)$.

Using the vertical symmetry, we obtain
\begin{equation}
\begin{split}
    \P(\text{$e\in\pi$ and $\widetilde \tau(n,k)\in[q_\alpha-b+a,q_\alpha]$})&=  \frac{1}{n}\sum_{f\in\mathfrak C(e)}\P(\text{$f\in\pi$ and $\widetilde \tau(n,k)\in[q_\alpha-b+a,q_\alpha]$})\\
    &= \frac 1n  \mathbb E[|\pi\cap \mathfrak C(e)|\mathds {1}_{\widetilde\tau(n,k)\in[q_\alpha-b+a,q_\alpha]}].
\end{split}
\end{equation}
Next, assume that for some $K_n$ we have
 \begin{equation}
    \mathbb E[|\pi\cap \mathfrak C(e)|\mathds {1}_{\widetilde\tau(n,k)\in[q_\alpha-b+a,q_\alpha]}]\le CK_n\P(\widetilde\tau(n,k)\in[q_\alpha-b+a,q_\alpha]).
 \end{equation}
 Trivially, we can choose $K_n=k$ but in some situations we have a better bound (e.g., under the assumption \eqref{ass:sides} or in directed models). It yields that
 \begin{equation}
    \P(\text{$e\in\pi$ and $\widetilde \tau(n,k)\in[q_\alpha-b+a,q_\alpha]$})\le C\frac {K_n}n  \P(\widetilde\tau(n,k)\in[q_\alpha-b+a,q_\alpha])\coloneqq A.
\end{equation}
It is easy to check that
\begin{equation}
\begin{split}
    \sum_{e} \P(\text{$e\in\pi$ and $\widetilde \tau(n,k)\in[q_\alpha-b+a,q_\alpha]$})&=\mathbb E[|\pi|\mathds {1}_{\widetilde \tau(n,k)\in[q_\alpha-b+a,q_\alpha]}]\\&\le \frac ba n
\P(\widetilde \tau(n,k)\in[q_\alpha-b+a,q_\alpha])\coloneqq B. 
\end{split}
\end{equation}
Using \eqref{eq:6} and Claim \ref{claim:optimization} we obtain
\begin{equation}
\begin{split}
     \sum_e  \Inf_e(f_n)^2&\le \frac 1 {G(\{a\})^2 }\sum_{e} \P(\text{$e\in\pi$ and $\widetilde \tau(n,k)\in[q_\alpha-b+a,q_\alpha]$})^2\\
     &\le \frac 1 {G(\{a\})^2 } 2AB\le CK_n\P(\widetilde \tau(n,k)\in[q_\alpha-b+a,q_\alpha])^2.
\end{split}
\end{equation}
 Besides, using Lemma \ref{lem:cormw}\footnote{The statement of Lemma \ref{lem:cormw} holds for \(\tau\), but with a straightforward adaptation of the proof, one can check that the same statement also holds for \(\widetilde{\tau}\) in place of \(\tau\).} and Proposition \ref{prop:MW2} yields for $n$ large enough depending on $\alpha$
 \begin{equation}
     \P(\widetilde \tau(n,k)\in[q_\alpha-b+a,q_\alpha])\le \frac {3n}k\P(T(n,k)\in[q_\alpha-b+a,q_\alpha])\le  Ce^{C/\ep}\log \frac nk \left(\frac k n\right)^{\frac 12- \ep}|\log(1-\alpha)|.
 \end{equation}
 Combining the two previous inequalities, we obtain
\begin{equation}
     \sum_e  \Inf_e(f_n)^2\le CK_ne^{C/\ep}\left(\log \frac nk\right)^2 \left(\frac k n\right)^{1-2 \ep}|\log(1-\alpha)|^2
\end{equation}
We choose $\ep= (\log \frac n k)^{-\frac 12 }$.
It yields that 
\begin{equation*}
    \sum_e  \Inf_e(f_n)^2\le C e^{C\sqrt{\log \frac nk}} |\log(1-\alpha)|^2 \cdot\left\{\begin{array}{ll} \frac k n &\mbox{if $K_n\le C$}\\
    \frac {k^2} n &\mbox{if $K_n\le k$}\\
   \frac k {n^{1-\delta}} &\mbox{if $K_n\le n^\delta$}\end{array}\right..
\end{equation*}
The first case is true in particular for an oriented version of the model. We expect it to be true in general.
The second bound corresponds to the crude upper bound using that geodesics for $\tau(n,k)$ have vertical fluctuations at least $k$.
The last bound corresponds to what we get when we assume \eqref{ass:sides}. 
Hence the sum goes to $0$ when $k\le n^{1/2}e^{-C\sqrt{\log n}}$ since we always have $K_n\le k$.

By Lemma~\ref{lem:intersection column}, if we further assume \eqref{ass:sides} for $s$ large enough depending on $\delta$,
 \begin{equation}
    \mathbb E[|\pi\cap \mathfrak C(e)|\mathds {1}_{\widetilde\tau(n,k)\in[q_\alpha-b+a,q_\alpha]}]\le Cn^\delta\P(\widetilde\tau(n,k)\in[q_\alpha-b+a,q_\alpha]) +C_\delta e^{-n^{c_\delta }}
 \end{equation}
 If $Cn^\delta\P(\widetilde\tau(n,k)\in[q_\alpha-b+a,q_\alpha])\le C_\delta e^{-n^{c_\delta }} $, then we obtain the bound
\[\sum_e  \Inf_e(f_n)^2\le n^2\P(\widetilde\tau(n,k)\in[q_\alpha-b+a,q_\alpha])^2=o(  e^{-\frac 12n^{c_\delta }}). \]
Otherwise, we get $K_n\le C n^\delta$. Hence, in both cases, we have
\[ \sum_e  \Inf_e(f_n)^2\le C e^{C\sqrt{\log \frac nk}} |\log(1-\alpha)|^2\frac k {n^{1-\delta}}.\]
It follows that the sum goes to $0$ for $k\le n^{1-2\delta}$. The result follows.
\end{proof}

\begin{proof}[Proof of Lemma \ref{lem:noise sensitive}]Let $\ep>0$. We have
\begin{equation}
    \begin{split}
        \EE[g_n(t)g_n(t^\ep)]-\EE[g_n(t)]^2&=\EE[g_n(t)g_n(t^\ep)]-\EE[f_n(t)f_n(t^\ep)]\\&\quad+ \EE[f_n(t)]^2-\EE[g_n(t)]^2\\&\quad+\EE[f_n(t)f_n(t^\ep)]-\EE[f_n(t)]^2.
    \end{split}
\end{equation}
We have
 \begin{equation}
       | \EE[g_n(t)g_n(t^\ep)-f_n(t)f_n(t^\ep)]|\le \P(g_n(t)\ne f_n(t))+\P(g_n(t^\ep)\ne f_n(t^\ep)).
 \end{equation}
 Similarly
 \begin{equation}
     |\EE[f_n(t)]^2-\EE[g_n(t)]^2|=|\EE[f_n(t)-g_n(t)]|\EE[f_n(t)+g_n(t)] \le 2\P(g_n(t)\ne f_n(t)).
 \end{equation}
 Combining all the previous inequalities, we get that
 \begin{equation}
   \lim_{n\rightarrow\infty}  \EE[g_n(t)g_n(t^\ep)]-\EE[g_n(t)]^2=0
 \end{equation}
 concluding the proof.
\end{proof}

\begin{proof}[Proof of Lemma \ref{lem:seq agree}]
Let $\widetilde\gamma$ be a geodesic for $\widetilde \tau(n,k)$. If $\widetilde\gamma\in\cP_k(n)$ (that is, $\widetilde\gamma$ does not have edges of the form $\{(i,0),(i,n-1)\}$), then we have
\[\widetilde\tau(n,k)\ge \tau(n,k).\]
Conversely, let $\gamma $ be a geodesic for $\tau(n,k)$. If $\gamma$ does not take edges of the top (that is of the form $\{(i,n), (i+1,n)\}$), then we have
that $\gamma \in \widetilde \cP_k(n)$ and
\[\widetilde\tau(n,k)\le \tau(n,k).\]
It yields that
\begin{equation}\label{eq:inclusion}
    \{\gamma\in \widetilde \cP_k(n)\}\cap\{\widetilde \gamma\in\cP _k(n)\}\subset \{\tau(n,k)=\widetilde\tau(n,k)\}.
\end{equation}

Denote by $R$ the rectangle that is a translate of $[0,n]\times[0,2k]$ such that its middle corresponds to the top. In particular, if $\gamma\notin  \widetilde \cP_k(n)$ (respectively $\widetilde \gamma\notin\cP_k(n)$) then $\gamma\subset R$ (respectively $\widetilde\gamma\subset R$ where $R$ is seen in $[0,n]\times \Z/n\Z$).
 As in the proof of Lemma \ref{lem:cormw}, let $m_0=\lfloor n/2k\rfloor$ we can find a family $(R_i)_{1\le i \le m_0}$ of disjoint translates of $R$ such that $R_1=R$ (in the case of $\widetilde \tau$ these rectangles are seen as rectangles in $[0,n]\times \Z/n\Z$).
 It follows that
 \[\P(\gamma\notin \widetilde \cP_k(n))\le \P \big( T_k(R_1)=\min_{1\le i\le m_0}T_k(R_i) \big) =\frac 1{m_0}\le C \frac k n.\]
Similarly,
\[\P(\widetilde \gamma\notin \cP_k(n))\le C\frac kn.\]
The conclusion follows from the two previous inequalities together with \eqref{eq:inclusion}.
\end{proof}

\section{The limit shape and the geometry of geodesics}\label{sec 5}

In this section we show that the geometry of geodesics can be controlled using information about the limit shape such as Assumption~\eqref{ass:sides} and Assumption~\eqref{ass:uniform curbature}. 
In particular, we prove that the vertical fluctuations are constrained under the assumption  \eqref{ass:uniform curbature}.
\begin{prop}\label{prop:6.2}
Suppose that $G$ satisfies \eqref{eq:assumption exp1}+\eqref{eq:assumption abs1} or \eqref{eq:assumption atomic} and in addition that \eqref{ass:uniform curbature} holds. Then, for any $\ep>0$ there exists $c>0$ such that
\[\P(T_n=\tau(n, n^{3/4+\ep}))\ge 1-\exp (-cn^{\epsilon }).\]
\end{prop}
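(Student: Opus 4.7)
The plan is to bound the transversal fluctuations of any geodesic for $T_n$ via a Newman--Piza type argument, exploiting the curvature assumption \eqref{ass:uniform curbature}.

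Fix $\epsilon>0$ and set $k=\lfloor n^{3/4+\epsilon}\rfloor$. Since every path in $\mathcal P_k(n)$ is a left-right crossing of $[0,n]^2$, we always have $T_n\le \tau(n,k)$, so $\{T_n\ne \tau(n,k)\}$ coincides with the event that every optimal left-right crossing has vertical displacement strictly greater than $k$. On this event there exist lattice points $u=(0,y_u)$, $w=(n,y_w)$ and $v=(x_v,y_v)\in [0,n]^2$ such that some geodesic for $T_n$ passes from $u$ to $w$ through $v$ (so $T(u,v)+T(v,w)=T(u,w)=T_n$) and the vertical distance $h$ from $v$ to the segment $\overline{uw}$ is at least $k/4$. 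This is an elementary consequence of the fact that the range of the $y$-coordinate along such a geodesic exceeds $k$.

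The next step is a deterministic curvature gap: under \eqref{ass:uniform curbature}, for any $u,v,w$ as above with $x_v\in[1,n-1]$ and vertical distance $h$ from $v$ to $\overline{uw}$,
\begin{equation*}
\mu(v-u)+\mu(w-v)-\mu(w-u)\ge c\,\frac{h^2}{n}.
\end{equation*}
Writing $v-u=x_1 \mathrm e_1+y_1 \mathrm e_2$ and $w-v=x_2 \mathrm e_1+y_2 \mathrm e_2$ with $x_1+x_2=n$ and $y_1+y_2=y_w-y_u$, one computes $\mu(v-u)=x_1\mu(\mathrm e_1+(y_1/x_1)\mathrm e_2)$ and similarly for the other two terms, and applies \eqref{ass:uniform curbature} directly when the slopes $|y_i/x_i|$ are at most $1$. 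In the complementary large-slope regime one uses that $\mu$ is a norm equivalent to $\ell^1$ to see that $\mu(v-u)$ alone already dominates $|y_1|$, which in this regime itself dominates $h^2/n$.

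I then combine the gap with the Talagrand-type concentration (available under either \eqref{eq:assumption exp1}+\eqref{eq:assumption abs1} or \eqref{eq:assumption atomic}, see Theorem~\ref{thm:Talagrand})
\begin{equation*}
\P\big(|T(a,b)-\E T(a,b)|\ge t\big)\le C\exp\big(-ct^2/|a-b|\big),
\end{equation*}
together with the Alexander-type mean bound $|\E T(a,b)-\mu(b-a)|=O(\sqrt{|a-b|}\log|a-b|)$. Since $h\ge k/4$ yields $h^2/n\ge c\,n^{1/2+2\epsilon}$, which dominates $\sqrt{n}\log n$, the identity $T(u,v)+T(v,w)=T(u,w)$ forces at least one of the three centered deviations to exceed $c h^2/n$, an event of probability at most $\exp(-ch^4/n^3)\le \exp(-cn^{4\epsilon})$. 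Taking a union bound over the $O(n^3)$ triples $(u,v,w)$ of lattice points in $[0,n]^2$ gives
\begin{equation*}
\P(T_n\ne \tau(n,k))\le Cn^3\exp(-cn^{4\epsilon})\le \exp(-c'n^{\epsilon})
\end{equation*}
for $n$ sufficiently large. The main obstacle will be the rigorous justification of the deterministic curvature gap, since \eqref{ass:uniform curbature} only controls $\mu$ near the $\mathrm e_1$ direction and the large-slope regime must be treated by hand; a secondary technical point is to verify the concentration inequality in both the \eqref{eq:assumption exp1}+\eqref{eq:assumption abs1} and \eqref{eq:assumption atomic} cases, which is standard (for \eqref{eq:assumption atomic} it is immediate from Azuma's inequality applied to the bounded weights).
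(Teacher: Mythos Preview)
Your argument has a genuine gap at the step ``so $T(u,v)+T(v,w)=T(u,w)=T_n$''. Here $T(\cdot,\cdot)$ denotes the \emph{unrestricted} point-to-point passage time (which is what Theorem~\ref{thm:Talagrand} controls). A geodesic for $T_n$ is constrained to $[0,n]^2$, so it need not be an unrestricted geodesic between its endpoints $u$ and $w$; consequently neither $T(u,w)=T_n$ nor $T(u,v)+T(v,w)=T(u,w)$ is justified. What one does get from the sub-paths is only the inequality $T_n\ge T(u,v)+T(v,w)$. Combining this with your curvature gap and concentration yields, with high probability,
\[
T_n\ \ge\ \mu(v-u)+\mu(w-v)-O(\sqrt n\log n)\ \ge\ \mu(w-u)+c\,h^2/n-O(\sqrt n\log n)\ \ge\ n\mu(\mathrm e_1)+c\,h^2/n-O(\sqrt n\log n),
\]
but this by itself is no contradiction: you still need an \emph{upper bound} of the form $T_n\le n\mu(\mathrm e_1)+o(n^{1/2+2\epsilon})$, and nothing in your outline provides it. (Comparing $T_n$ to a straight horizontal line only gives $T_n\lesssim n\,\mathbb E[t_e]$, which is too weak since $\mathbb E[t_e]>\mu(\mathrm e_1)$.)

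This missing upper bound is exactly what the paper supplies. It introduces the fixed reference points $u'=(0,\lfloor n/2\rfloor)$, $v'=(n,\lfloor n/2\rfloor)$ and shows---using the very same curvature-plus-concentration mechanism, applied with $u=u'$, $v=v'$---that with high probability the unrestricted $u'$--$v'$ geodesic does not exit $[0,n]^2$ through the top or bottom, whence a sub-path of it is a valid left-right crossing and $T_n\le T(u',v')$. Both the upper bound on $T_n$ and the lower bound on any large-displacement crossing are then read off from a single event $\Omega_1$ (Lemma~\ref{lem:2}). Your Newman--Piza style argument is essentially the standard one for \emph{unrestricted} point-to-point geodesics; adapting it to $T_n$ requires precisely this extra reference-point comparison.
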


 In the following proposition, we bound the number of times a left-right geodesic in a rectangle  intersects a given vertical line.

\begin{prop}\label{cor:number of sides} 
    Let $\epsilon >0$ and suppose that $G$ satisfies \eqref{eq:assumption atomic} and that \eqref{ass:sides} holds for $s$ sufficiently large depending on $\epsilon $. There exist $C_\ep,c_\ep>0$ such that the following holds. Let $R=[0,n]\times [0,k]$ be a rectangle with $4k\le n$. Let $\pi $ be the intersection of all the minimal left-right crossing of $R$. For any $x\in [0,n]$ we have that 
    \begin{equation}
        \mathbb P \big( |\pi \cap \mathcal C_x | \ge k^\epsilon  \big) \le C_\ep\exp (-k^{c_\epsilon }),
    \end{equation}
    where $\mathcal C_x := \{(x,y) : y\in [0,k]\}$ is the $x$ column of $R$.
\end{prop}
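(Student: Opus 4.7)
The plan is to leverage the many-sides assumption~\eqref{ass:sides} to obtain a quantitative penalty for left-right crossings that oscillate across the vertical line $\mathcal{C}_x$, and then use the minimality of $\pi$ to derive a contradiction if the oscillations are too numerous. Let $\gamma$ be a minimal left-right crossing of $R$ that realises $\pi$, set $M := |\pi \cap \mathcal{C}_x|$, and enumerate the intersection points $v_1,\dots,v_M$ in the order visited by $\gamma$. Between consecutive $v_i$, the sub-paths $\gamma_i$ of $\gamma$ alternate between the two half-planes separated by $\mathcal{C}_x$, so by pigeonhole at least $\lfloor M/2 \rfloor$ of them lie strictly on one side, which we may assume to be the right.

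The key deterministic input is a quantitative strict-convexity estimate for the norm $\mu$ that follows from~\eqref{ass:sides} with $s$ sufficiently large, essentially the content of \cite[Theorem~1.5]{dembin2024coalescence}: for vectors of the form $u=(\Delta, a)$ and $v=(-\Delta, b)$ with $\Delta>0$ one has $\mu(u) + \mu(v) \ge \mu(u+v) + c(s)\Delta$, with $c(s)\to \infty$ as $s\to \infty$. Applied to a right excursion $\gamma_i$ from $v_i$ to $v_{i+1}$ with rightmost reach at horizontal distance $\Delta_i$, this lower bounds the $\mu$-length of $\gamma_i$ by $\mu(v_{i+1}-v_i) + c(s)\Delta_i$. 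Converting this into a passage-time statement via Talagrand-type concentration, on an event of probability $1-\exp(-k^{c_\epsilon})$, every excursion of depth $\Delta_i \ge k^{\epsilon/2}$ has its surplus $c(s)\Delta_i$ dominate the $O(\sqrt{\Delta_i \log k})$ fluctuation. On this good event, the alternative crossing obtained by replacing each such deep excursion with the column segment of $\mathcal{C}_x$ from $v_i$ to $v_{i+1}$ is strictly faster than $\gamma$ and still belongs to the set of left-right crossings of $R$, contradicting minimality; consequently $M$ is bounded by twice the number of shallow excursions with $\Delta_i < k^{\epsilon/2}$.

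The main obstacle is to rule out having too many shallow excursions, where the norm surplus is of the same order as the passage-time fluctuations and the deterministic argument breaks down. I would handle this by a direct combinatorial and concentration argument using the atomic assumption~\eqref{eq:assumption atomic}: a shallow excursion of a given shape requires a specific local alignment of edge weights in a small neighbourhood of $v_i$ (roughly, the lateral detour must be competitive with the column segment), an event whose probability is bounded away from $1$ uniformly. A BK-type inequality or a union bound over the possible locations and shapes of shallow excursions, exploiting the near-independence of disjoint local configurations, should then yield an $\exp(-k^{c_\epsilon})$ tail on the event that more than $k^{\epsilon}/2$ shallow excursions occur along $\gamma$. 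Combining the two regimes gives the stated bound, with the threshold $s$ chosen large enough that the constant $c(s)$ overwhelms the combinatorial counting factors.
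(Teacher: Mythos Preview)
Your proposal has two genuine gaps.

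First, the strict-convexity estimate you invoke is false as stated. For $u=(\Delta,a)$ and $v=(-\Delta,b)$ the surplus $\mu(u)+\mu(v)-\mu(u+v)$ is trivially bounded above by $2b\Delta$ (since $\mu\le b\|\cdot\|_1$), so $c(s)$ cannot diverge as $s\to\infty$. Assumption~\eqref{ass:sides} only says the limit shape is not a polygon with few sides; it does not manufacture arbitrarily large curvature constants, and the theorem you cite from \cite{dembin2024coalescence} goes in the opposite direction (it verifies \eqref{ass:sides} for certain $G$, it does not extract quantitative convexity from it). Worse, when $a,b$ have the same sign and $\Delta\ll|a|+|b|$, the surplus is of order $\Delta^2/(|a|+|b|)$ rather than $\Delta$, while the fluctuation of the replacement column segment from $v_i$ to $v_{i+1}$ is of order $\sqrt{|v_{i+1}-v_i|}$, which can be as large as $\sqrt{k}$. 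So the ``deep'' regime threshold $\Delta_i\ge k^{\epsilon/2}$ does not guarantee that the surplus beats the fluctuations.

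Second, the shallow-excursion step is not an argument. A shallow excursion is merely a geodesic sub-path staying within $k^{\epsilon/2}$ of $\mathcal{C}_x$; nothing forces a rare local configuration of weights, and distinct shallow excursions need not occupy disjoint edge sets, so no BK-type inequality is available. Since many crossings of $\mathcal{C}_x$ can come \emph{entirely} from shallow excursions, this is the heart of the matter and cannot be left as a sketch.

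The paper proceeds quite differently and avoids excursion counting altogether. It uses \eqref{ass:sides} only through Proposition~\ref{prop:limit}: for $s$ large, the unrestricted geodesic from $(0,0)$ to $(m,m)$ stays in a cone of half-angle $\theta<\pi/2$ about the diagonal except within $m^\delta$ of its endpoints. One then fixes $u=(x,y)\in\mathcal{C}_x$ and looks at the two unrestricted geodesics from $u$ to the points $v^\pm$ on the top and bottom of $R$ at $\pm 45^\circ$ from $u$. On the high-probability event from Corollary~\ref{cor:controlgeodesic}, these geodesics stay inside $R$ (hence equal their restricted versions) and lie to the right of $\mathcal{C}_x$ outside $B(u,k^\delta)$. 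The restricted geodesic $\tilde{\gamma}_u$ from $u$ to the right side of $R$ is then \emph{sandwiched} between them, forcing $\tilde{\gamma}_u\cap\mathcal{C}_x\subseteq B(u,k^\delta)$. A union bound over $u$ on the column finishes the proof of Proposition~\ref{cor:number of sides}. No per-excursion concentration or combinatorial counting is required.
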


\subsection{Proof of Proposition~\ref{prop:6.2}}

For the proof of Proposition~\ref{prop:6.2}, we will need the next theorem which follows from Talagrand's concentration inequality \cite{Talagrand1995} together with a bound on the non-random fluctuations due to Alexander \cite[Theorem~3.2]{alexander}. These results are discussed in \cite[Theorem~2.7 and Theorem~4.5]{dembin2024coalescence}.

\begin{thm}\label{thm:Talagrand}
    Suppose that $G$ satisfies \eqref{eq:assumption abs1}+\eqref{eq:assumption exp1} or \eqref{eq:assumption atomic}. Then, there exists $c>0$ such that for all $x\in \mathbb Z ^2$ and any $ \sqrt{\|x\|} \log ^2 \|x\| \le t \le \|x\|$ we have that 
    \begin{equation}
        \mathbb P \big(  |T(0,x)-\mu (x) | \ge t  \big) \le \exp (-ct^2/\|x\|).
    \end{equation}
\end{thm}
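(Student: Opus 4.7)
The plan is to decompose
\[T(0,x) - \mu(x) = \bigl(T(0,x) - \mathbb E[T(0,x)]\bigr) + \bigl(\mathbb E[T(0,x)] - \mu(x)\bigr)\]
and to control the two contributions separately: the random fluctuations around the mean via Talagrand's concentration inequality, and the non-random gap between the mean and the time constant via Alexander's deterministic estimate. Since the threshold in the statement satisfies $t \ge \sqrt{\|x\|}\log^2\|x\|$, the non-random contribution will be negligible compared to $t$, so essentially all of the deviation budget can be spent on the random part.

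For the random contribution, I would appeal to Talagrand's inequality \cite{Talagrand1995} applied to the passage time viewed as a function of the i.i.d.\ edge weights. On the high-probability event that the geodesic has at most $C\|x\|$ edges (which follows by comparison with a deterministic path of linear length), $T(0,x)$ is coordinatewise Lipschitz with a small certificate function supported on the geodesic. Under either \eqref{eq:assumption atomic} (bounded weights) or \eqref{eq:assumption abs1}+\eqref{eq:assumption exp1} (exponential moment) this yields
\[\mathbb P\bigl(|T(0,x) - \mathbb E[T(0,x)]| \ge s\bigr) \le \exp(-cs^2/\|x\|)\]
for all $0 \le s \le \|x\|$. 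In the continuous-weight regime, the usual preliminary truncation at a scale of order $\log\|x\|$ is required before Talagrand's bound applies, while in the atomic case the inequality follows directly.

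For the non-random contribution, I would invoke Alexander's theorem \cite{alexander}, which gives
\[0 \le \mathbb E[T(0,x)] - \mu(x) \le C\sqrt{\|x\|}\log\|x\|\]
uniformly in $x\in\mathbb Z^2$ under the above moment hypotheses (this is recorded as \cite[Theorem~4.5]{dembin2024coalescence}). Consequently, whenever $t \ge \sqrt{\|x\|}\log^2\|x\|$ and $\|x\|$ is sufficiently large, $|\mathbb E[T(0,x)] - \mu(x)| \le t/2$, so the triangle inequality together with the Talagrand bound above gives
\begin{equation*}
\mathbb P\bigl(|T(0,x) - \mu(x)| \ge t\bigr) \le \mathbb P\bigl(|T(0,x) - \mathbb E[T(0,x)]| \ge t/2\bigr) \le \exp\bigl(-ct^2/(4\|x\|)\bigr).
\end{equation*}
Absorbing the factor $4$ into $c$ and adjusting $c$ to cover the finitely many small values of $\|x\|$ yields the stated estimate.

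The main technical obstacle is to secure Talagrand's bound uniformly in the two distributional regimes with the same Gaussian exponent $t^2/\|x\|$: the bounded case is classical, while the continuous case requires first truncating the weights at a logarithmic scale and then controlling the rare contribution from atypically large weights along the geodesic using the exponential-moment hypothesis. Alexander's bound is already at the sharp scale $\sqrt{\|x\|}\log\|x\|$, and is precisely what forces the hypothesis to start at $t = \sqrt{\|x\|}\log^2\|x\|$ rather than at the purely Gaussian scale $\sqrt{\|x\|}$.
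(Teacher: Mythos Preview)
Your proposal is correct and matches the paper's own treatment: the paper does not give a detailed proof but simply states that the theorem ``follows from Talagrand's concentration inequality \cite{Talagrand1995} together with a bound on the non-random fluctuations due to Alexander \cite[Theorem~3.2]{alexander}'', referring to \cite[Theorem~2.7 and Theorem~4.5]{dembin2024coalescence} for the details. Your decomposition into random and non-random fluctuations, with Talagrand handling the former and Alexander the latter, is exactly this argument fleshed out.
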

The following lemma is an easy consequence of Theorem \ref{thm:Talagrand}.
\begin{lem}\label{lem:2}
    Suppose that $G$ satisfies \eqref{eq:assumption abs1}+\eqref{eq:assumption exp1} or \eqref{eq:assumption atomic} and in addition that \eqref{ass:uniform curbature} holds. There exists $c>0$ such that the following holds. Let $u=(u_1,u_2),v=(v_1,v_2)$ and $w=(w_1,w_2)$ such that $u_1=0,v_1=n$ and $|u_2-w_2|\ge n^{3/4+\epsilon }$. We also let $u':=(0,\lfloor n/2 \rfloor )$ and $v':=(n,\lfloor n/2 \rfloor )$. We have that
    \begin{equation}
        \mathbb P \big( T(u,w)+T(w,v) \le T(u',v') \big) \le \exp (-cn^{\epsilon }).
    \end{equation}
\end{lem}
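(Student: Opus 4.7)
The plan is to reduce Lemma~\ref{lem:2} to a deterministic inequality on the limiting norm $\mu$ by invoking Theorem~\ref{thm:Talagrand}, and to prove that inequality using \eqref{ass:uniform curbature}. Set
\[G:=\mu(w-u)+\mu(v-w)-\mu(v'-u')=\mu(w-u)+\mu(v-w)-n\mu(\mathrm e_1).\]
I will first show $G\ge cn^{1/2+2\epsilon}$. Let $f(h):=\mu(\mathrm e_1+h\mathrm e_2)$, so that \eqref{ass:uniform curbature} reads $f(h)-f(0)\ge ch^2$ for $|h|\le 1$. Combined with the convexity of $f$ and the Euclidean sandwich $c\|x\|_2\le\mu(x)\le C\|x\|_2$, this upgrades to $f(h)-\mu(\mathrm e_1)\ge c\min(h^2,|h|)$ for every $h\in\mathbb R$. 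Assuming the regular regime $0<w_1<n$, homogeneity gives $\mu(w-u)=w_1f((w_2-u_2)/w_1)$ and $\mu(v-w)=(n-w_1)f((v_2-w_2)/(n-w_1))$, and dropping one non-negative contribution yields, with $\delta:=|w_2-u_2|\ge n^{3/4+\epsilon}$,
\[G\ge cw_1\min\bigl(\delta^2/w_1^2,\,\delta/w_1\bigr)=c\min\bigl(\delta^2/w_1,\,\delta\bigr)\ge c\min\bigl(\delta^2/n,\,\delta\bigr)\ge cn^{1/2+2\epsilon}.\]

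The probabilistic step is then immediate. The event $\{T(u,w)+T(w,v)\le T(u',v')\}$ forces
\[(T(u,w)-\mu(w-u))+(T(w,v)-\mu(v-w))-(T(u',v')-\mu(v'-u'))\le -G\le -cn^{1/2+2\epsilon},\]
so at least one of the three centered passage times has absolute value $\ge G/3$. Each of the three displacements has length $O(n)$ in the main regime, so Theorem~\ref{thm:Talagrand} applied with $t=G/3$ gives, for each, probability at most $\exp(-c(G/3)^2/(Cn))\le\exp(-cn^{4\epsilon})\le\exp(-cn^{\epsilon})$; the Talagrand hypothesis $t\ge\sqrt{\|x\|}\log^2\|x\|$ is amply satisfied since $G/3\gg\sqrt n\log^2 n$. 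A union bound over the three events finishes the proof.

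The main obstacle is the geometric case analysis outside the regular regime, when $w_1\notin[0,n]$ or when $|v_2-u_2|$ is of order $n$ or larger, as then some of the slopes $h_1=(w_2-u_2)/w_1$, $h_2=(v_2-w_2)/(n-w_1)$ leave the domain where \eqref{ass:uniform curbature} directly applies. I would dispatch these by noting that the Euclidean sandwich already forces $\mu(w-u)+\mu(v-w)$ to exceed $n\mu(\mathrm e_1)$ by $\Omega(n)$ in each such sub-case: either the horizontal spans of $w-u$ and $v-w$ sum to strictly more than $n$, or $\mu(v-u)$ itself dominates $n\mu(\mathrm e_1)$ by a term of order $n$ through the triangle inequality $\mu(w-u)+\mu(v-w)\ge\mu(v-u)$ and the Euclidean lower bound on $\mu$. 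In each such sub-case the gap $G$ is far larger than the required $n^{1/2+2\epsilon}$ and the concentration argument goes through unchanged (possibly replacing $n$ by the relevant displacement length in the Talagrand bound). This bookkeeping is notationally tedious but otherwise routine.
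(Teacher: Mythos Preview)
Your proof is correct and follows essentially the same approach as the paper: establish a deterministic gap $G\ge cn^{1/2+2\epsilon}$ between $\mu(w-u)+\mu(v-w)$ and $\mu(v'-u')$ using \eqref{ass:uniform curbature}, then invoke Theorem~\ref{thm:Talagrand} to show that none of the three passage times can deviate enough to close that gap. The paper's version introduces the auxiliary point $w'=(w_1,\lfloor n/2\rfloor)$ and combines $\mu(u-w)\ge\mu(u'-w')+c|u_2-w_2|^2/n$ with $\mu(w-v)\ge\mu(w'-v')$ and the triangle inequality, whereas you compute directly via homogeneity; these are the same computation. If anything, your treatment is slightly more careful: you explicitly extend \eqref{ass:uniform curbature} from $|h|<1$ to all $h$ via convexity, and you flag the irregular regimes ($w_1\notin(0,n)$ or very large vertical displacements), which the paper glosses over entirely.
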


\begin{proof}
    Let $w':=(w_1,\lfloor n/2 \rfloor )$. Using \eqref{ass:uniform curbature} 
we have that $\mu (u-w)\ge \mu (u'-w')+ c|u_2-w_2|^2/n$ and using the fact that $\mu $ is symmetric aroung the $x$ axis we have $\mu (w-v) \ge \mu (u'-v')$. Thus, by the 
triangle inequality we have
    \begin{equation}
        \mu (u-w)  +\mu (w-v)  \ge \mu (u'-w')+ c|u_2-w_2|^2/n  +\mu (w'-v') \ge 3n^{1/2+\epsilon } + \mu (u'-v').
    \end{equation}
Hence, in order to violate the inequality inside the probability in Lemma~\ref{lem:2}, one of the passage times $T(u,w)$, $T(w,v)$ or $T(u',v')$ has to deviate from its corresponding norm $\mu(u-w)$, $\mu(w-v)$ or $\mu(u'-v')$ by at least $n^{1/2+\epsilon }$ which has probability at most $\exp (-cn^{\epsilon })$ by Theorem~\ref{thm:Talagrand}.
\end{proof}

\begin{proof}[Proof of Proposition~\ref{prop:6.2}]
Let $\Omega _1$ be the event that for all $u \in \{0\}\times [0,n]$, $v\in \{n\}\times [0,n]$ and $w\in [-n^2,n^2]^2$ with $|u_2-w_2|\ge n^{3/4+\epsilon }$ we have that
\begin{equation}
T(u,w)+T(w,v) > T(u',v').
\end{equation}
By Lemma~\ref{lem:2} and a union bound we have that $\mathbb P (\Omega _1)\ge 1-\exp (-cn^{\epsilon })$.

Let $\gamma '$ be the unrestricted geodesic connecting $u'=(0,\lfloor n/2 \rfloor )$ to $v'=(n,\lfloor n/2 \rfloor )$ and let $\Omega _2$ be the event that $|\gamma '|\le n^2$. We have that $\mathbb P (\Omega _2) \ge 1-e^{-cn}$ (see, e.g., \cite[Claim~2.8]{dembin2024coalescence}). On $\Omega _1\cap \Omega _2$ the geodesic $\gamma ' $ will only cross the left and right boundaries of $[0,n]^2$ and therefore $T_n\le T(u',v')$ (as a sub-path of this geodesic will be a proper left-right crossing of $[0,n]^2$). 

Any left-right crossing $p$ of $[0,n]^2$ with vertical displacement bigger than $2n^{3/4+\epsilon }$ contain points $u,v,w$ as in the event $\Omega _1$. Thus, on the event $\Omega _1\cap \Omega _2$ its weight satisfies $T(p)\ge T(u,w)+T(w,v)>T(u',v')\ge T_n$ and hence such a path cannot be a geodesic.
\end{proof}

\subsection{Proof of Proposition~\ref{cor:number of sides}}
Throughout this section we assume that $G$ satisfies \eqref{eq:assumption atomic}. In this case the geodesic between two points is not necessarily unique. It is convenient to choose one of these geodesics in a deterministic and consistent way. To this end, we consider a total ordering $\preccurlyeq ^*$ on simple paths $p$ connecting $u$ and $v$. Let $p,q$ be two such paths and suppose that $\{e_1,e_2,\dots \}$ and $\{f_1,f_2,\dots \}$ are the sets of edges crossed by $p$ and $q$ respectively. Fix some translation invariant, total order on the edges of $\mathbb Z ^2$ (namely, for two edges $e,f$ and $v\in \mathbb Z^2$ we have $e<f$ iff $e+v<v+f$). Suppose that $e_1<e_2<\cdots $ and $f_1<f_2<\cdots $. We say that $p \preccurlyeq ^* q$ if either $|p|<|q|$ or $|p|=|q|$ and $(e_1,e_2,\dots ) \le (f_1,f_2,\dots )$ in lexicographic order (where in each coordinate we use the above total order on the edges). 
We say that a geodesic $\gamma $ from $u$ to $v$ is minimal$^*$ if it is minimal with respect to $\preccurlyeq ^*$. This way of choosing a specific geodesic is consistent in the sense that if $\gamma (u,v)$ is the minimal$^*$ geodesic connecting $u$ and $v$ and $w,z$ are vertices along this geodesic, then the minimal$^*$ geodesic connecting $w$ to $z$ is precisely the corresponding sub-path of $\gamma (u,v)$. This means that minimal$^*$ geodesics can be ``sandwiched" just like in the situations in which there is a unique geodesic.

To prove Proposition~\ref{cor:number of sides}, we need to rule out geodesics (or minimal$^*$ geodesics) in a rectangle that go a long time in the ``wrong direction". A result of this kind was proved in \cite[Proposition 3.1]{dembin2024coalescence} for unrestricted geodesics. We present below a special case of this proposition in the direction $\pi /4$. 

\begin{prop}\label{prop:limit}
Let $\ep>0$ and assume that \eqref{ass:sides} holds for $s\ge 1$ sufficiently large depending on $\ep$. Let $\gamma $ be the minimal$^*$ geodesic from $(0,0)$ to $(n,n)$. There exist $\theta\in(\pi/4,\pi/2)$ depending only on $G$ such that 
 \begin{equation}
     \mathbb P \Big( (\lfloor R\cos\varphi \rfloor,\lfloor R\sin\varphi \rfloor)\in \Gamma \text{ for some } R\ge n^{\epsilon} \text{ and }  \varphi\in(-\pi,\pi], \,|\varphi|\ge \theta\Big) \le C\exp \big( -n^{c_\epsilon } \big) .
 \end{equation}
\end{prop}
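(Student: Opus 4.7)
The strategy is to reduce the claim to a vertical analog of Proposition~\ref{prop:limit} applied to a sub-geodesic whose two endpoints both lie on $\mathcal{C}_x$. Fix the minimal$^{*}$ optimal left-right crossing $\gamma^{\star}$ of $R$. Since $\pi\subseteq\gamma^{\star}$, it suffices to bound $\mathbb{P}(|\gamma^{\star}\cap\mathcal{C}_x|\ge k^{\epsilon})$. Assuming $|\gamma^{\star}\cap\mathcal{C}_x|=m\ge k^{\epsilon}$, label the column visits of $\gamma^{\star}$ in path order as $v_1,\ldots,v_m$ with $v_j=(x,y_j)$; simplicity of $\gamma^{\star}$ forces the $y_j$ to be distinct integers in $[0,k]$. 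Picking the minimum-height and maximum-height visits $v_{*},v^{*}$ yields two vertices with $y^{*}-y_{*}\ge m-1\ge k^{\epsilon}-1$. Without loss of generality $v_{*}$ precedes $v^{*}$ in path order; by consistency of the minimal$^{*}$ tie-breaking under sub-pathing, the sub-path $\sigma$ from $v_{*}$ to $v^{*}$ is the unique minimal$^{*}$ geodesic between these two vertically-aligned points.

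Next I invoke a vertical analog of Proposition~\ref{prop:limit}: IID FPP on $\mathbb{Z}^{2}$ is invariant under the $\pi/2$ rotation of the lattice and \eqref{ass:sides} is a rotation-symmetric hypothesis on $\mathcal{B}_{G}$, so the proof of Proposition~\ref{prop:limit} carries over to yield the corresponding cone-containment for minimal$^{*}$ geodesics oriented vertically. Applied to $\sigma$ with $N:=y^{*}-y_{*}\ge k^{\epsilon}-1$, this places $\sigma$, outside a ball of sub-linear radius around $v_{*}$, inside a cone about the vertical direction of some half-angle $\theta_v<\pi/2$ depending only on $G$, with failure probability at most $C\exp(-N^{c})\le C\exp(-k^{\epsilon c})$ for some $c=c(\epsilon)>0$. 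The horizontal extent of $\sigma$ is then uniformly bounded by a constant multiple of $N$.

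\textbf{The main obstacle} is that cone-containment alone does not immediately bound the number of column crossings: a geodesic confined to a narrow strip around $\mathcal{C}_x$ may a priori oscillate across the column many times. To close this gap I propose iterating the cone-containment at nested dyadic sub-scales of $\sigma$: for each $N_i=2^i\le N$, isolate a consecutive sub-run of $\gamma^{\star}$'s column visits whose vertical spread is of order $N_i$, and apply the vertical cone-containment to the corresponding sub-geodesic (itself minimal$^{*}$ by consistency). Combining the $O(\log k)$ scale-indexed confinements via a union bound, and taking $s$ in \eqref{ass:sides} sufficiently large depending on $\epsilon$ to guarantee a uniform exponent $c$ across scales, forces the total number of column crossings to be polylogarithmic in $k$, contradicting $m\ge k^{\epsilon}$. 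Formalizing this nested-scale combinatorial argument---and ensuring that the exponent $c$ survives the $O(\log k)$ union bound---constitutes the bulk of the technical work.
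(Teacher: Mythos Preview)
Your proposal is not a proof of Proposition~\ref{prop:limit} at all: that proposition is the cone-containment result for the unrestricted geodesic from $(0,0)$ to $(n,n)$, which the paper does not prove but imports from \cite[Proposition~3.1]{dembin2024coalescence}. What you have written is an attempt at Proposition~\ref{cor:number of sides} (the bound on $|\pi\cap\mathcal{C}_x|$), which \emph{uses} Proposition~\ref{prop:limit} as input. I will assess it as such.

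There are two genuine gaps. First, the sub-path $\sigma$ of $\gamma^{\star}$ between the extreme column visits $v_*,v^*$ is only the minimal$^{*}$ geodesic \emph{restricted to $R$}; Proposition~\ref{prop:limit} and any lattice-symmetric variant of it concern \emph{unrestricted} geodesics, so you cannot apply them to $\sigma$. (Also, the $\pi/2$ lattice rotation sends the direction $(n,n)$ to $(-n,n)$, not to the vertical, so a ``vertical analog'' is not a symmetry consequence of the stated proposition.) The paper handles precisely this restricted-vs.-unrestricted issue via the sandwiching Lemma~\ref{lem:1}: from a column point $u=(x,y)$ it launches \emph{unrestricted} minimal$^{*}$ geodesics at $\pm 45^{\circ}$ to the boundary of $R$, uses Corollary~\ref{cor:controlgeodesic} to confine them to the open square with corners $u,v^{\pm}$ (hence inside $R$, and strictly to the right of $\mathcal{C}_x$ outside $B(u,k^{\delta})$), and then traps the restricted right-going geodesic $\tilde{\gamma}_u$ between them by planarity.

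Second, even granting your vertical cone-containment, it gives no control on $|\sigma\cap\mathcal{C}_x|$: the column $\mathcal{C}_x$ is the \emph{axis} of your cone, and a path confined to an arbitrarily thin cone about the vertical can still cross its axis unboundedly often. Your dyadic iteration does not fix this, because the same defect recurs at every sub-scale---cone-containment about the vertical never yields transversal information across $\mathcal{C}_x$. The paper's choice of the $45^{\circ}$ direction is exactly the device that supplies this transversality: Corollary~\ref{cor:controlgeodesic} places the diagonal geodesic (away from its endpoints) in the \emph{open} square, hence strictly on one side of $\mathcal{C}_x$, and the sandwich then bounds $|\tilde{\gamma}_u\cap\mathcal{C}_x|$ by $O(k^{\delta})$ in a single step, with no iteration needed.
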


Let us note that \cite[Proposition 3.1]{dembin2024coalescence} is stated a bit differently. Indeed, in \cite{dembin2024coalescence} we worked in the case that $G$ is absolutely continuous and so the geodesic between any two points is unique. However, looking at the proof of \cite[Proposition 3.1]{dembin2024coalescence} we observe that the continuity of $G$ was never used and that any geodesic in the proof can be replaced with minimal$^*$ geodesic.

The next corollary is a straightforward consequence of the previous proposition, using the reflection symmetry of $\mathbb Z ^2$ around the lines $x=y$ and $y=-x+n$ (see Remark~\ref{remark:shit} below).

\begin{cor}\label{cor:controlgeodesic}
Let $\ep>0$ and assume that \eqref{ass:sides} holds for $s\ge 1$ sufficiently large depending on $\ep$. Letting $\gamma $ be the minimal$^*$ geodesic from $(0,0)$ to $(n,n)$, we have 
 \begin{equation}
     \mathbb P \Big( \gamma\setminus \big( B((0,0),2n^\ep)\cup B((n,n),2n^\ep) \big)\not \subset (0,n)^2\Big) \le C\exp \big( -n^{c_\epsilon } \big) .
 \end{equation}
\end{cor}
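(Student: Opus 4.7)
The plan is to deduce the corollary by applying Proposition~\ref{prop:limit} four times, once directly and three times after composing with the group of symmetries of $[0,n]^2$ that preserves the unordered pair of endpoints $\{(0,0),(n,n)\}$. This group is generated by the reflection $\sigma_1$ across the line $y=x$ (which fixes both endpoints) and the reflection $\sigma_2$ across the anti-diagonal $y=-x+n$ (which swaps them). Since the i.i.d.\ environment is invariant in law under each of these lattice isometries, each application of Proposition~\ref{prop:limit} gives a cone-type constraint on $\gamma$, and intersecting all four cones will force every sufficiently distant point of $\gamma$ into the open square $(0,n)^2$.

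In detail, applying Proposition~\ref{prop:limit} directly gives that with probability at least $1-C\exp(-n^{c_\epsilon})$ every point of $\gamma$ at distance at least $n^\epsilon$ from $(0,0)$ has polar angle $\varphi\in(-\theta,\theta)$, and since $\theta<\pi/2$ this forces $\cos\varphi>0$, i.e.\ first coordinate strictly positive. Conjugating by $\sigma_1$ (which fixes the two endpoints) yields, with the same probability, that such points additionally satisfy $|\varphi-\pi/2|<\theta$, so $\varphi\in(\pi/2-\theta,\theta)\subset(0,\pi/2)$, giving both coordinates strictly positive. Conjugating further by $\sigma_2$ (or equivalently by the rotation by $\pi$ around $(n/2,n/2)$) together with $\sigma_1$ produces the analogous two constraints from the opposite endpoint: every point of $\gamma$ at distance at least $n^\epsilon$ from $(n,n)$ has first and second coordinates strictly less than $n$. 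A union bound over these four high-probability events, together with the observation that a point outside $B((0,0),2n^\epsilon)\cup B((n,n),2n^\epsilon)$ is in particular outside both balls of radius $n^\epsilon$, completes the argument.

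The main obstacle will be making the reflection step rigorous, since the edge ordering $\preccurlyeq^*$ used to pin down the minimal$^*$ geodesic is translation invariant but not reflection invariant; consequently $\sigma_1(\gamma)$ is not, a priori, the minimal$^*$ geodesic of $\sigma_1\cdot t$. This is resolved by Remark~\ref{remark:shit}: conjugating $\preccurlyeq^*$ by the reflection produces another translation invariant total order $\preccurlyeq^{**}$, and the proof of Proposition~\ref{prop:limit} makes no essential use of which such ordering was chosen, so it applies verbatim to the minimal$^{**}$ geodesic. The push-forward of that statement under the reflection is precisely the sought cone bound for $\gamma$ itself, and the same recipe handles $\sigma_2$ and $\sigma_1\circ\sigma_2$.
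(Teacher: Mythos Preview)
Your proposal is correct and follows exactly the route the paper intends: it states that the corollary is a straightforward consequence of Proposition~\ref{prop:limit} via the reflection symmetries across the lines $y=x$ and $y=-x+n$, invoking Remark~\ref{remark:shit} to handle the fact that the $\preccurlyeq^*$ ordering is not reflection invariant. Your write-up simply fills in the details the paper leaves implicit.
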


The following lemma is the main step toward the proof of Proposition~\ref{cor:number of sides}. In the proof of the lemma, we ``sandwich" restricted geodesics between unrestricted ones.

\begin{lem}\label{lem:1}
Let $\ep>0$ and assume that \eqref{ass:sides} holds for $s\ge 1$ sufficiently large depending on $\ep$. Let $R=[0,n]\times [0,k]$ be a rectangle with $4k\le n$. Let $x\in [0,n/2]$, $y\in [k^{\epsilon },k-k^{\epsilon }]$ and $u=(x,y)$. Let $\tilde{\gamma }_u$ be the minimal$^*$ geodesic connecting $u$ to the right boundary of $R$ restricted to stay in $R$. Then
    \begin{equation}
        \mathbb P \big( |\tilde{\gamma }_u \cap \mathcal C_x | \ge k^\epsilon  \big) \le C\exp (-k^{c_\epsilon }).
    \end{equation}
\end{lem}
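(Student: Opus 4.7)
The plan is to sandwich $\tilde{\gamma}_u$ between two auxiliary unrestricted minimal$^*$ geodesics $\alpha^+$ (above $u$) and $\alpha^-$ (below $u$), whose geometry is controlled by Corollary~\ref{cor:controlgeodesic}, to force every column-$x$ vertex of $\tilde{\gamma}_u$ to lie in a narrow vertical band of width $O(k^\epsilon)$ about $u$. A counting argument using the simplicity of the path then finishes the proof.

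Set $L := \lfloor k^\epsilon/4 \rfloor$. The hypothesis $y \in [k^\epsilon, k - k^\epsilon]$ places the squares $S^+ := [x, x+2L] \times [y+L, y+3L]$ and $S^- := [x, x+2L] \times [y-3L, y-L]$ inside $R$. Let $\alpha^+$ (resp.\ $\alpha^-$) be the minimal$^*$ geodesic between the diagonal corners $(x, y+L)$ and $(x+2L, y+3L)$ of $S^+$ (resp.\ between $(x, y-L)$ and $(x+2L, y-3L)$ of $S^-$). Applying Corollary~\ref{cor:controlgeodesic} at scale $2L$ (with an internal exponent $\epsilon' \ll \epsilon$ so that the near-corner wiggle $(2L)^{\epsilon'}$ is much smaller than $L$), we obtain that $\alpha^+ \subset S^+$ and $\alpha^- \subset S^-$ with probability $\geq 1 - C\exp(-k^{c_\epsilon})$. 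In particular, throughout the horizontal window $[x, x+2L]$, $\alpha^+$ lies at heights $\geq y+L$ and $\alpha^-$ at heights $\leq y-L$. Together, $\alpha^+$ and $\alpha^-$ act as barriers in this window: by a planar rerouting argument using the minimal$^*$ ordering, $\tilde{\gamma}_u$ cannot cross $\alpha^+$ or $\alpha^-$ inside $[x, x+2L]$ (otherwise one could swap tails at the crossing with a sub-path of $\alpha^\pm$ to obtain a path of strictly smaller length or with strictly smaller minimal$^*$ order, contradicting the minimality of $\tilde{\gamma}_u$). Consequently, the sub-path of $\tilde{\gamma}_u$ from $u$ until it first exits the vertical strip $[x, x+2L]\times[0,k]$ through column $x+2L$ is confined to the corridor $[x, x+2L] \times [y-L, y+L]$, so all its column-$x$ vertices have heights in $[y-L, y+L]$, giving at most $2L+1 \leq k^\epsilon$ such vertices by the simplicity of $\tilde{\gamma}_u$. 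A further application of Corollary~\ref{cor:controlgeodesic} to a square of side $\asymp k$ placed to the right of the exit point shows that, with comparable probability, $\tilde{\gamma}_u$ does not return to column $\mathcal{C}_x$ after first crossing column $x+2L$.

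The main technical hurdle is the planar sandwich step: $\tilde{\gamma}_u$ and $\alpha^\pm$ do not share endpoints, and the endpoint of $\tilde{\gamma}_u$ on the right boundary of $R$ is not determined in advance. Rigorously excluding crossings in the presence of ties under \eqref{eq:assumption atomic} relies on the minimal$^*$ tie-breaking rule introduced at the beginning of the section, which is tailored so that sub-paths of minimal$^*$ geodesics are themselves minimal$^*$ and so that rerouting at a crossing vertex cannot yield a path preferable in the $\preccurlyeq^*$ order. Handling the ``non-return'' after exiting column $x+2L$ requires an analogous planar argument but is otherwise similar. Both steps are standard applications of Corollary~\ref{cor:controlgeodesic} and the planar geometry of minimal$^*$ geodesics once the setup above is in place.
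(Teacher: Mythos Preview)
Your sandwich step has a genuine gap. The minimal$^*$ non-crossing property you invoke says only that two minimal$^*$ geodesics agree between any pair of shared vertices; it does \emph{not} forbid a single transversal crossing, and it gives no contradiction when the two paths have unrelated endpoints. For your barriers $\alpha^\pm$ to trap $\tilde\gamma_u$, they would have to separate $R$ (or at least separate $u$ from the far part of $\mathcal C_x$) so that any excursion of $\tilde\gamma_u$ above height $y+L$ and back forces \emph{two} intersections with $\alpha^+$. But $\alpha^+$ runs from $(x,y+L)$ to $(x+2L,y+3L)$: its second endpoint is in the interior of $R$, so $\alpha^+$ does not separate $R$ at all, and it separates your strip $[x,x+2L]\times[0,k]$ only if $\tilde\gamma_u$ never leaves that strip to the left. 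For $x\ge 1$ nothing prevents $\tilde\gamma_u$ from stepping to column $x-1$, circumventing $\alpha^+$ entirely, and returning to $\mathcal C_x$ at an arbitrary height. Your ``swap tails at the crossing'' rerouting cannot repair this: swapping onto $\alpha^+$ produces a path ending at $(x+2L,y+3L)$, not at the right boundary of $R$, so there is no competitor to $\tilde\gamma_u$ and hence no contradiction. The subsequent ``non-return'' claim has the same defect: Corollary~\ref{cor:controlgeodesic} controls a geodesic between two prescribed points, and you have not explained how to apply it to $\tilde\gamma_u$, whose far endpoint on $\{n\}\times[0,k]$ is random.

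The paper fixes exactly this by choosing auxiliary geodesics that \emph{do} separate $R$. In the main case ($x\ge k^{\epsilon/2}$) it sends unrestricted minimal$^*$ geodesics from $u$ itself to the diagonal points $v^-=(x+y,0)$ and $v^+=(x+k-y,k)$ on the bottom and top boundaries of $R$; Corollary~\ref{cor:controlgeodesic} keeps each inside its square except near the corners, so their first hits $w^\pm$ of $\partial R$ lie on the bottom/top edges and $\tilde\gamma(u,w^\pm)=\gamma(u,w^\pm)$. Now $\tilde\gamma(u,w^-)$, $\tilde\gamma_u$, $\tilde\gamma(u,w^+)$ are three restricted minimal$^*$ geodesics \emph{from the common source $u$} to boundary targets ordered bottom/right/top, and the standard planar ordering of minimal$^*$ geodesics from a common point (sub-paths of minimal$^*$ geodesics are minimal$^*$, hence coincide between shared vertices) yields $\tilde\gamma_u\setminus B(u,k^{\epsilon/2})\subset (x,n]\times[0,k]$ in one stroke, with no separate ``non-return'' step needed. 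For small $x$ the paper instead launches the barriers from $(0,y\pm h)$ on the left boundary so that, after trimming, they become restricted geodesics between two boundary points of $R$ and therefore genuinely disconnect $R$; the same coincidence-between-shared-vertices argument then traps $\tilde\gamma_u$. The missing idea in your attempt is precisely this: the barriers must run from boundary to boundary of $R$ (or emanate from $u$), which is why the paper shoots at $\pm 45^\circ$ toward $\partial R$ rather than placing small local squares near $u$.
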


\begin{figure}[htp]
    \centering
\includegraphics[width=17cm]{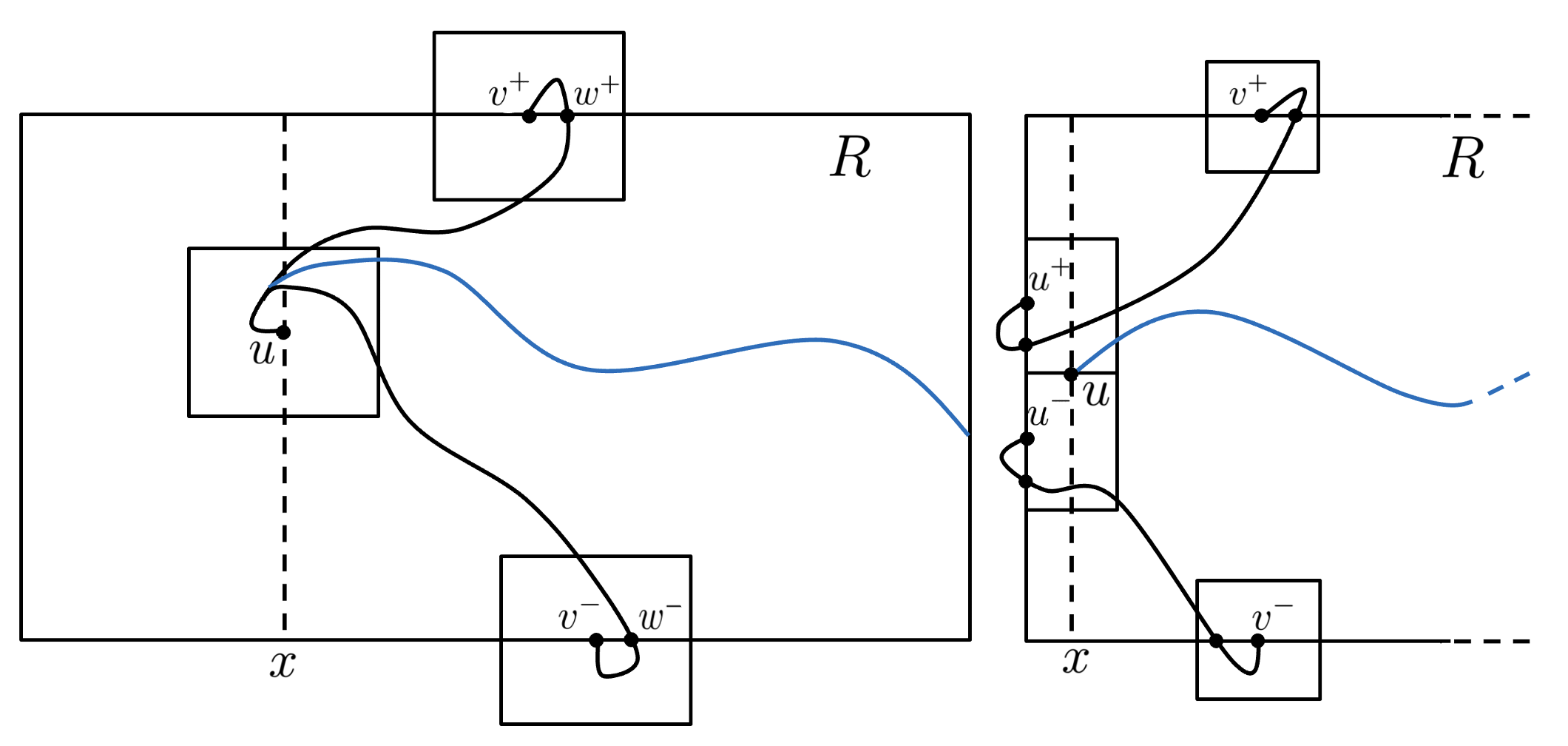}
    \caption{The events $\Omega $ and $\Sigma$. When $x\ge k^{\delta }$, the event $\Omega $ depicted on the left, forces the geodesic $\tilde{\gamma}_u$ (in blue) to intersect $\mathcal C_x$ only inside $B(u,k^\delta )$. When $x< k^{\delta }$, the event $\Sigma $ on the right, forces the geodesic $\tilde{\gamma}_u$ to intersect $\mathcal C_x$ only inside $B(u,2h)$ with $h=\Theta (k^{\delta })$.}
    \label{fig:cyclic}
\end{figure}

\begin{proof}[Proof of Lemma~\ref{lem:1}]
For $v,w\in R$ we let $\tilde{\gamma }(v,w)$ be the minimal$^*$ geodesic from $v$ to $w$ that is restricted to stay in $R$ and let $\gamma (v,w)$ be the unrestricted minimal$^*$ geodesic from $v$ to $w$. Fix $\delta :=\epsilon /2$.

We start with the case $x\ge k^{\delta }$, so that $B(u,k^{\delta }) \subseteq R$. Consider the vertices
    \begin{equation}
        v^-=v^-(u):=(x+y,0) \quad \text{and}\quad v^+=v^+(u):=(x+k-y,k).
    \end{equation}
    These are the two points on the boundary of $R$ that are at $-45$ and $45$ degrees respectively from $u$. Define the event
    \begin{equation}
        \Omega ^-:= \Big\{ \gamma (u,v^-) \setminus \big( B(u,k^\delta ) \cup B(v^-,k^\delta ) \big) \subseteq (x,x+y)\times (0,y) \Big\}.
    \end{equation}
    In words, $\Omega ^-$ is the event that the geodesic $\gamma (u,v^-)$ is contained inside the open square with opposite corners $u$ and $v^-$ except for short parts close to its endpoints (see Figure~\ref{fig:cyclic}). Symmetrically, we define the event 
        \begin{equation}
        \Omega ^+:= \Big\{ \gamma (u,v^+) \setminus \big( B(u,k^\delta ) \cup B(v^+,k^\delta ) \big) \subseteq (x,x+k-y)\times (y,k) \Big\}
    \end{equation}
and let $\Omega :=\Omega ^+ \cap \Omega ^-$. By Corollary~\ref{cor:controlgeodesic} we have that $\mathbb P (\Omega )\ge 1-C\exp (-k^{c_\epsilon })$.

It suffices to prove that on $\Omega $, we have $|\tilde{\gamma } _u\cap \mathcal C _x| \le k^\epsilon $. Let $w^{\pm }$ be the first intersection of $\gamma (u,v^{\pm})$ with the inner boundary of $R$ (coming from $u$). On the event $\Omega $, we have that $\tilde{\gamma }(u,w^\pm )=\gamma (u,w^\pm )$ and $\tilde{\gamma }(u,w^\pm ) \setminus B(u,k^{\epsilon })\subseteq (x,n]\times [0,k]$. Thus, the geodesic $\tilde{\gamma } _u$ is sandwiched between $\tilde{\gamma }(u,w^-)$ and $\tilde{\gamma }(u,w^+)$ and hence $\tilde{\gamma }_u \setminus B(u,k^{\delta })\subseteq (x,n]\times [0,k]$ and $|\tilde{\gamma } _u\cap \mathcal C _x| \le 3k^\delta \le k^{\epsilon }$.

The last proof fails when $x<k^{\delta }$ as the geodesics $\gamma (u,w^{\pm })$ may turn backward from $u$ for a short distance and exit $R$ (so that $\tilde{\gamma }(u,w^\pm )\neq \gamma (u,w^\pm )$). To handle this case, we define slightly different events. Fix $M$ sufficiently large depending only on the edge distribution, let $h:=\lceil Mk^{\delta } \rceil <k^{\epsilon }/2$ and consider the vertices
\begin{equation}
    u^-:=(0,y-h ), \quad  u^+:=(0,y+h ), \quad 
    v^-:=(y-h ,0), \quad  v^+:=(k-y-h,k).
\end{equation}
Define the events
\begin{equation}
    \Sigma ^{\pm} := \Big\{ \gamma (u^{\pm},v^{\pm})\setminus \big( B(u^{\pm}, h )\cup B(v^{\pm },k^{\delta})\big)\subseteq (k^{\delta },k)\times [0,k]  \Big\} \quad \text{and}\quad \Sigma :=\Sigma ^-\cap \Sigma ^+.
\end{equation}
See Figure~\ref{fig:cyclic}. 
Next, note that when $M$ is sufficiently large depending on the angle $\theta <\pi /2$ from Proposition~\ref{prop:limit}, any point $x_0$ in the set $([0,k^\delta]\times [0,k])\setminus B(u^\pm,h)$, satisfies  $x_0=u^\pm+Re^{i\varphi}$ with $|\varphi|\ge \theta$ and $R\ge h$. Thus, for such $M$, as in Corollary~\ref{cor:controlgeodesic}, it follows from Proposition~\ref{prop:limit} that $\mathbb P (\Sigma )\ge 1-C\exp (-k^{c_\epsilon })$.

On  $\Sigma $, there are vertices $w^{\pm},z^{\pm}$ along $\gamma (u^{\pm},v^{\pm})$ on the boundary of $R$ with $w^{\pm}\in B(u^{\pm},h)$ and $z^{\pm}\in B(v^{\pm},k^{\delta})$ for which $\gamma (w^{\pm},z^{\pm})=\tilde{\gamma } (w^{\pm},z^{\pm})$. Moreover, on $\Sigma $, the geodesic $\tilde{\gamma _u}$ is trapped between $\tilde{\gamma } (w^{-},z^{-})$ and $\tilde{\gamma } (w^{+},z^{+})$ and it follows that $\tilde{\gamma}_u\setminus B(u,2h)\subseteq (k^{\delta },n)\times [0,k]$ and $|\tilde{\gamma } _u\cap \mathcal C _x| \le 9h \le k^{\epsilon }$.
\end{proof}

\begin{proof}[Proof of Proposition \ref{cor:number of sides}]
    It suffices to prove the proposition when $\pi $ is replaced by $\gamma$ which is the minimal$^*$ left-right geodesic in $R$. By symmetry we may assume that $x\le n/2\le n-2k$. If $|\gamma \cap \mathcal C _x|\ge k^{\epsilon }$ then there is a vertex $u=(x,y)\in \mathcal C _x$ with $y\in [k^{\epsilon }/3,k-k^{\epsilon }/3]$ for which the sub-geodesic $\tilde{\gamma } _u\subseteq \gamma $ connecting $u$ to the right boundary of $R$ satisfies $|\gamma _u \cap \mathcal C _x| \ge k^{\epsilon }/3$. Thus, the proposition follows from Lemma~\ref{lem:1} (with a slightly smaller $\epsilon >0$) and a union bound over $u=(x,y)$ with $y\in [k^{\epsilon }/3,k-k^{\epsilon }/3]$.
\end{proof}

\begin{remark}\label{remark:shit}
    In the proofs of Proposition \ref{cor:number of sides} and Corollary~\ref{cor:controlgeodesic} we use the reflection symmetry of $\mathbb Z^2$. Let us note the reflection of a minimal$^*$ path is not necessarily minimal$^*$ as the total order on the edges used in the definition of minimal$^*$ is not symmetric. This is of course not an issue since we can prove Lemma~\ref{lem:1} and Proposition~\ref{prop:limit} for minimal$^{**}$ geodesics which are defined in the same way as minimal$^*$ but with a reflected total order on the edges. Then we reflect the environment to obtain the desired estimate for minimal$^*$ geodesics going in the oposite direction.
\end{remark}

\bibliographystyle{plain}
\bibliography{biblio}
\end{document}